\numberwithin{equation}{section}
\numberwithin{figure}{section}
\theoremstyle{plain}
\newtheorem{thm}[subsubsection]{Theorem}
  \theoremstyle{remark}
  \newtheorem{notation}[subsubsection]{Notation}
  \theoremstyle{definition}
  \newtheorem{defn}[subsubsection]{Definition}
 \theoremstyle{definition}
  \newtheorem{example}[subsubsection]{Example}
  \theoremstyle{remark}
  \newtheorem{rem}[subsubsection]{Remark}
  \theoremstyle{plain}
  \newtheorem{lem}[subsubsection]{Lemma}
  \theoremstyle{plain}
  \newtheorem{prop}[subsubsection]{Proposition}
  \theoremstyle{plain}
  \newtheorem{cor}[subsubsection]{Corollary}
\newcommand{\xyR}[1]{
  \xydef@\xymatrixrowsep@{#1}}
\newcommand{\xyC}[1]{
  \xydef@\xymatrixcolsep@{#1}}
\newtheorem*{rep@theorem}{\rep@title}
\newcommand{\newreptheorem}[2]{%
\newenvironment{rep#1}[1]{%
 \def\rep@title{#2 \ref{##1}}%
 \begin{rep@theorem}}%
 {\end{rep@theorem}}}
\begin{document}
\global\long\def\t#1{\mathrm{#1}}

\global\long\def\cart#1#2#3#4{\xymatrix{#1\ar[r]\ar[d]\ar@{}[dr]|(0.2)\ulcorner &  #3\ar[d]\\
#2\ar[r]  &  #4 
}
 }

\global\long\def\ran{\t{\mathcal{R}an}_{X}}
\global\long\def\dom{\t{Dom}_{X}}
\global\long\def\Ran{\t{Ran}_{X}}

\global\long\def\colim{\t{colim\,}}

\global\long\def\qco{\mathfrak{Qco}}
\global\long\def\indco{\mathfrak{Ico}}
\global\long\def\dmod{\mathfrak{Dmod}}

\global\long\def\spec{\t{spec}}
\global\long\def\mod{\t{mod}}

\global\long\def\fin{\mathfrak{Fin}}
\global\long\def\finop{\mathfrak{Fin}^{\text{op}}}
\global\long\def\finsurop{\mathfrak{Fin}_{\t{sur}}^{\t{op}}}
\global\long\def\finsur{\mathfrak{Fin}_{\t{sur}}}

\global\long\def\H{\mathcal{H}}

\global\long\def\S{\mathfrak{S}}
\global\long\def\Sop{\S^{\t{op}}}
\global\long\def\aff{\mathfrak{Aff}}
\global\long\def\affop{\mathfrak{Aff}^{\t{op}}}

\global\long\def\prl{\mathcal{P}r^{\t{\mbox{L}}}}
\global\long\def\prr{\mathcal{P}r^{\t R}}
\global\long\def\cathat{\hat{\mathbf{Cat}}_{\infty}}
\global\long\def\catex{\hat{\mathbf{Cat}}_{\infty}^{\t{Ex}}}

\global\long\def\catexl{\hat{\mathbf{Cat}}_{\infty}^{\t{Ex,L}}}
\global\long\def\catexr{\hat{\mathbf{Cat}}_{\infty}^{\t{Ex,R}}}
\global\long\def\catexlr{\hat{\mathbf{Cat}}_{\infty}^{\t{Ex,LR}}}

\global\long\def\set{\mathbf{Set}}
\global\long\def\cat{\mathbf{Cat}}
\global\long\def\gpd{\mathbf{Gpd}}
\global\long\def\catinf{\mathbf{Cat}_{\infty}}
\global\long\def\gpdinf{\mathbf{Gpd}_{\infty}}
\global\long\def\vect{\mathbf{Vect}}

\global\long\def\shv#1#2{\mathcal{S}hv_{#2}\left(#1\right)}
\global\long\def\pshv#1{\mathcal{P}shv\left(#1\right)}

\newcommandx\qmap[2][usedefault, addprefix=\global, 1=]{\t{QMap}^{#1}\left(X,#2\right)}
\newcommandx\gmap[2][usedefault, addprefix=\global, 1=]{\t{GMap}^{#1}\left(X,#2\right)}
\newcommandx\qsect[4][usedefault, addprefix=\global, 1=, 2=]{\t{QSect}_{#2}^{#1}\left(#3,#4\right)}
\newcommandx\gsect[4][usedefault, addprefix=\global, 1=, 2=]{\t{GSect}_{#2}^{#1}\left(#3,#4\right)}

\newcommandx\bun[1][usedefault, addprefix=\global, 1=]{\t{Bun}_{G}^{#1}}
\global\long\def\bunbbar#1{\overline{\t{Bun}_{G}^{B#1}}}
\global\long\def\bungbar#1{\overline{\t{Bun}_{G}^{#1}}}

\global\long\def\lke#1{\t{LKE}_{#1}}
\global\long\def\rke#1{\t{RKE}_{#1}}

\title{D-modules on Spaces of Rational Maps and on Other Generic Data}

\author{Jonathan Barlev}

\date{\today}
\begin{abstract}
Let $X$ be an algebraic curve. We study the problem of parametrizing
geometric data over $X$, which is only generically defined. E.g.,
parametrizing generically defined (aka rational) maps from $X$ to
a fixed target scheme $Y$. 

There are three methods for constructing functors of points for such
moduli problems (all originally due to Drinfeld), and we show that
the resulting functors are equivalent in the fppf Grothendieck topology. 

As an application, we obtain three presentations for the category
of D-modules ``on'' $B\left(K\right)\backslash G\left(\mathbb{A}\right)/G\left(\mathbb{O}\right)$,
and we combine results about this category coming from the different
presentations.
\end{abstract}
\maketitle
\tableofcontents

\section{Introduction}

Let $k$ be an algebraically closed field of characteristic 0, and
let $X$ be a smooth, projective and connected algebraic curve over
$k$. Denote by $\mathbb{A}$, $\mathbb{O}$ and $K$ the algebra
of adeles, algebra of integer adeles, and the field of rational functions
over $X$, respectively. 

In this paper we study the problem of parametrizing geometric data
over $X$ which is only generically defined. The basic example of
such a moduli problem is that of generically defined maps (rational
maps) from $X$ to a fixed target scheme $Y$. I.e., the starting
point is the given set of $k$-points (in this case it is the set
$\t{Hom}\left(\spec\left(K\right),Y\right)$) and the task at hand
is that of constructing a functor of points $\mathbf{Scheme}^{\t{op}}\rightarrow\mathbf{Set}$
which describes what an $S$-family of such generic maps is, for an
arbitrary scheme $S$. 

The main example we are interested in is motivated by the Langlands
program. In the classical setting, one encounters sets such as $B\left(K\right)\backslash G\left(\mathbb{A}\right)/G\left(\mathbb{O}\right)$,
$N\left(K\right)\backslash G\left(\mathbb{A}\right)/G\left(\mathbb{O}\right)$
and their relatives. The premise of the geometric program is that
these sets are the $k$-points of some space (``space'' interpreted
very loosely). The story goes that each point in the set $B\left(K\right)\backslash G\left(\mathbb{A}\right)/G\left(\mathbb{O}\right)$
is to be interpreted as representing a $G$-bundle on $X$, together
with the data of a reduction to $B$ at the generic point of $X$%
\footnote{This is admittedly equivalent to the set of global reductions to $B$,
but this interpretation leads to a different space! See also remark
\ref{rem:families of generic maps}.%
}. We wish to describe a space parametrizing such data via a functor
of points, and as above our starting point is the given set of $k$-points,
and our task is to define what an $S$-family of such generic reductions
is. 

The literature (and mathematical folklore) contains three, a-priori
different, construction schemas for such moduli problems (all originally
due to Drinfeld). The main result of this paper is that all three
constructions give rise to functors of points which are equivalent
in the fppf Grothendieck topology. Consequently, various invariants
such as the categories of quasi-coherent and D-module (as well as
derivative invariants such as homology groups) are equivalent. 

\medskip

As an application concerning the set $B\left(K\right)\backslash G\left(\mathbb{A}\right)/G\left(\mathbb{O}\right)$,
we construct a space $\bun[B\left(\t{gen}\right)]$ which is its geometrization,
and prove: 

\begin{repthm}{thm:more_cont} 

There exists an adjunction $\left(!-\t{forward},!-\t{back}\right)$
\[
\xymatrix{\dmod\left(\bun[B\left(\t{gen}\right)]\right)\ar@<5pt>[rr]^{!-\t{forward}} &  & \dmod\left(\bun\right)\ar@<5pt>[ll]^{!-\t{back}}}
\]
Moreover, $!-\t{back}$ is fully-faithful.

\end{repthm}

The pullback functor above should be interpreted as a geometric analog
of the maps
\[
\t{Fun}\left(B\left(K\right)\backslash B\left(\mathbb{A}\right)/B\left(\mathbb{O}\right)\right)=\xymatrix{\t{Fun}\left(B\left(K\right)\backslash G\left(\mathbb{A}\right)/G\left(\mathbb{O}\right)\right)\ar@<5pt>[rr]^{\t{integrate}} &  & \t{Fun}\left(G\left(K\right)\backslash G\left(\mathbb{A}\right)/G\left(\mathbb{O}\right)\right)\ar@<5pt>[ll]^{\t{pullback}}}
\]
which appear in relation to the Eisenstein series and constant term
operators, in the classical setting.

\subsection{Overview}

{}

In section \ref{sec:dom_approach} we present the first construction
schema, which we consider to be the conceptually fundamental one.
Unfortunately, conceptual appeal notwithstanding, this approach is
deficient in that invariants of the spaces so constructed are not
easy to describe (directly).

In sections \ref{sec:Quasi-maps} and \ref{sec:Bun_B_bar}, we describe
an approach which involves degenerations of regular data. This approach
is starts from the notion of \emph{quasi-maps} and the space $\overline{\t{Bun}_{B}}$
which were first presented by Finkelberg and Mikovic in \cite{FM},
and have received a fair amount of attention since. In particular,
the construction we present is the one used by Gaitsgory in \cite{DG-Whit}.
In broad terms, the upshot of this construction is that the spaces
in question are presented as quotients of a scheme (or Artin stack)
by a proper (and schematic) equivalence relation. 

In section \ref{sec:ran_approach} we describe an approach for parametrizing
generic data using the \emph{Ran Space}. This is the approach taken
by Gaitsgory in \cite{DG-Ran,DG-Whit}. This approach has the advantage
that certain invariants of the spaces so constructed are amenable
to computation via chiral homology methods. In particular, in \cite{DG-Cont}
Gaitsgory succeeds in computing the homology groups in certain cases.

In section \ref{sec:Cont_results} we use Gaitsgory's initial homology
computation for spaces of rational maps to obtain similar results
for additional moduli problems not discussed in \cite{DG-Cont}.

\subsection{Acknowledgments }

I would like to express my deep gratitude to Dennis Gaitsgory, who
was my doctoral advisor while writing this paper, for directing me
towards this project, encouraging me along the way, and for his comments
on the draft. In addition, Gaitsgory's contractibility result in \cite{DG-Cont}
plays a pivotal role in the results of section \ref{sec:Cont_results}.

\section{\label{sec:dom_approach}Moduli spaces of generic data}
\begin{notation}
Recall that $k$ is an algebraically closed field of characteristic
0, and that $X$ is a smooth connected and projective curve over $k$.
We denote by $\S$ the category of finite type schemes over $k$,
and by $\aff$ the category of finite type affine schemes over $k$.

By an \emph{$\infty$-category} we mean an $\left(\infty,1\right)$-category,
see appendix \ref{sec:Abstract-nonsense} for more details. We denote
by $\catinf$ the $\infty$-category of (small) $\infty$-categories.
We respectively denote by $\set$ and $\gpdinf$ the full subcategories
of sets and $\infty$-groupoids in $\catinf$. We shall occasionally
refer to a groupoid as a \emph{homotopy type} or a \emph{space}.

For a category $C$, we let $\pshv C$ denote the $\infty$-category
of presheaves i.e., functors $C^{\t{op}}\rightarrow\gpdinf$. In the
particular case when $C=\aff$ we use the term \emph{functor of points}
to refer to a presheaf in $\pshv{\aff}$. When $C$ is equipped with
a Grothendieck topology $\tau$, we denote the corresponding $\infty$-category
of sheaves by $\shv{C;\tau}{}$ (or omit $\tau$ when it is obvious
which topology is being used).

$ $If $C$ is a category which has been constructed to classify certain
data, we shall often denote an object of $C$ by listing the data
which it classifies (and we shall say that the data \emph{presents}
the object). For example, in definition \ref{def:dom} below, we use
the expression $\left(S,U_{S}\subseteq S\times X\right)$ to denote
an object of the category $\dom$; it should be clear from the context
what kind of data each term in the parenthesis refers to. When the
data is required to satisfy certain conditions, these are implicitly
assumed to hold and are not reflected in the notation.
\end{notation}

\subsection{Families of domains}

In the interest of motivating definition \ref{def:dom}, consider
the problem of constructing a moduli space of rational functions on
$X$ (i.e., generically defined maps to $\mathbb{A}^{1}$), $K_{X}:\aff^{\t{op}}\rightarrow\mathbf{Set}$.
An $S$-point of this functor, $f\in K_{X}\left(S\right)$ should
be presented by a rational function on $S\times X$. Let $U\subseteq S\times X$
be the largest open subscheme on which $f$ is defined. In order for
$K_{X}$ to be a functor, we must be able to pull back $f$ along
any map of schemes, $T\rightarrow S$. Requiring that these pullbacks
be defined amounts to the condition that for every $s\in S$, $U$
intersects the fiber $s\times X$. The following definition captures
this property of the domain of $f$:
\begin{defn}
\label{def:dom}$\vphantom{}$
\begin{enumerate}
\item A domain in $X$ is its non-empty open subscheme%
\footnote{In our case such a subscheme in a-priori dense. Generalizing this
notion to an arbitrary scheme, which might not be connected, one should
additionally impose a density condition. %
}. Let $S$ be a scheme, an \emph{$S$-family of domains} in $X$ is
an open subscheme $U_{S}\subseteq S\times X$ which is \emph{universally
dense} with respect to $S$. I.e., for every map of schemes $T\rightarrow S$,
after forming the pullback, 
\[
\xymatrix{U_{T}\ar[r]\ar[d] & U_{S}\ar[d]\\
T\times X\ar[r] & S\times X
}
\]
we have that $U_{T}\subseteq T\times X$ is also dense. It suffices
to check this condition at all closed points of $S$. I.e., we are
simply requiring that for every closed point, the open subscheme $U_{s}\subseteq X$
is a domain (in particular it's non-empty). 
\item The totality of all families domains in $X$ forms a (ordinary) category,
$\dom$: 

\begin{description}
\item [{An~object}] consists of the data $\left(S,U_{S}\subseteq S\times X\right)$
where $S\in\aff$, and $U_{S}$ is a family of domains in $X$.$ $ 
\item [{A~morphism}] $\left(S,U_{S}\subseteq S\times X\right)\rightarrow\left(T,U_{T}\subseteq T\times X\right)$
consists of the data of a map of affine schemes $S\xrightarrow{f}T$
which induces a commutative diagram 
\[
\xymatrix{U_{S}\ar[r]\ar[d] & U_{T}\ar[d]\\
S\times X\ar[r]^{f\times id_{X}} & T\times X
}
\]

\end{description}
\end{enumerate}
\end{defn}
There exists an evident functor 
\[
\xyR{.5pc}\xymatrix{\dom\ar[r]^{q} & \aff\\
\left(S,U_{S}\subseteq S\times X\right)\ar@{|->}[r] & S
}
\xyR{2pc}
\]
 which is a Cartesian fibration, and whose  fiber, $\left(\dom\right)_{S}$,
is the poset of $S$-families of domains in $X$ (a full subcategory
of all open sub-schemes of $S\times X$).

\subsection{Abstract moduli spaces of generic data}
\begin{notation}
A functor $C\xrightarrow{f}D$ induces, via pre-composition, a pull
back functor $\mathcal{P}shv\left(C\right)\xleftarrow{f_{*}}\mathcal{P}shv\left(D\right)$.
$f_{*}$ fits into an adjoint triple $\left(\ \lke f,f_{*},\ \rke f\right)$,
where $\ \lke f$ and $\ \rke f$ are defined on the objects of $\mathcal{P}shv\left(D\right)$,
by left and right Kan extensions (respectively) along 
\[
\xymatrix{C^{\t{op}}\ar[d]_{f}\ar[r]^{\mathcal{F}} & \gpdinf\\
D^{\t{op}}\ar@{-->}[ur]
}
\]

\end{notation}
The following definition formalizes what we mean by a moduli problem
of generic data over $X$ :
\begin{defn}
\label{dom moduli problem}The category of (abstract)\emph{ moduli
problems of generic data }is $\pshv{\dom}$. Given a presheaf $\left(\dom\right)^{\t{op}}\xrightarrow{\mathcal{F}}\gpdinf$,
its \emph{associated} functor of points is $\ \lke q\mathcal{F}\in\pshv{\aff}$,
where $q$ denotes the fibration $\dom\xrightarrow{q}\aff$.\end{defn}
\begin{example}
\label{exam:genmaps}Rational functions form a moduli problem of generic
data if we set $K_{X,\dom}:\left(\dom\right)^{\t{op}}\rightarrow\mbox{Sets}$
to be the functor 
\[
\left(S,U_{S}\subseteq S\times X\right)\mapsto\t{Hom}_{\S}\left(U_{S},\mathbb{A}^{1}\right)
\]
Its associated functor of points, $K_{X}:=\lke q\left(K_{X,\dom}\right):\affop\rightarrow\mathbf{Set}$,
sends 
\[
S\mapsto\{f\in K\left(S\times X\right):\mbox{ the domain of }f\mbox{ is an }S-\mbox{familiy of domains in }X\}
\]
where $K\left(S\times X\right)$ is the algebra of global section
of the sheaf of total quotients. I.e., the functor $K_{X}$ forgets
the evidence that $f$ is defined on a large enough domain.

Replacing $\mathbb{A}^{1}$ with an arbitrary target scheme $Y$ we
obtain similarly constructed presheaves classifying generically defined
maps from $X$ to $Y$: 
\[
\gmap Y_{\dom}:\dom^{\t{op}}\rightarrow\mathbf{Set}
\]
and its associated functor of points, which we denote 
\[
\gmap Y:\aff^{\t{op}}\rightarrow\mathbf{Set}
\]
\end{example}
\begin{rem}
\label{rem:families of generic maps}Let $\t{Map}\left(X,Y\right)$
denote the functor of points which parametrizes families of regular
maps. Since $X$ is a curve, when $Y$ is projective every generically
defined map from $X$ to $Y$ admits a (unique) extension to a regular
map defined across all of $X$. However, this is no longer the case
in families, and consequently the map $\t{Map}\left(X,Y\right)\rightarrow\gmap Y$
is not an equivalence, despite inducing an isomorphism on the set
of $k$-points. E.g., when $Y=\mathbb{P}^{1}$ the functor $\t{Map}\left(X,\mathbb{P}^{1}\right)$
has infinitely many components (labeled the degree of the map), but
$\gmap{\mathbb{P}^{1}}$ is connected.\end{rem}
\begin{example}
\label{exam:bungbk}Reduction spaces - Let $\bun[B\left(\dom\right)]:\left(\dom\right)^{\t{op}}\rightarrow\mathbf{Gpd}$
be the functor which sends $\left(S,U_{S}\subseteq S\times X\right)$
to the groupoid which classifies, up to isomorphism, the data 
\[
\left(\mathcal{P}_{G},\mathcal{P}_{B}^{U_{S}},\mathcal{P}_{B}^{U_{S}}\times_{B}G\xrightarrow[\cong]{\phi}\mathcal{P}_{G}\big|_{U_{S}}\right)
\]
where $\mathcal{P}_{G}$ is a $G$-torsor over $S\times X$, $\mathcal{P}_{B}^{U_{S}}$
is a $B$-torsor on $U_{S}$, and $\phi$ is and isomorphism of $G$-bundles
over $U_{S}$ (the data of a reduction of the structure group of $\mathcal{P}_{G}$
to $B$, over $U_{S}$). Denote its associated functor of points by
\[
\bun[B\left(\t{gen}\right)]:=\ \lke q\left(\bun[B\left(\dom\right)]\right)\in\pshv{\aff}
\]
The latter is a geometrization of the set $B\left(K\right)\backslash G\left(\mathbb{A}\right)/G\left(\mathbb{O}\right)$.
I.e., the isomorphism classes of the groupoid $\bun[B\left(\t{gen}\right)]\left(k\right)$
are in bijection with this set.

More generally, if $H$ is any subgroup of $G$, we define in a similar
way a functor of points $\bun[H\left(\t{gen}\right)]$, which classifies
families of $G$-bundles on $X$ with a generically defined reduction
to $H$. In particular, $\bun[1\left(\dom\right)]$ is the moduli
space of $G$-bundles with a generic trivialization (equivalently
a generic section).\end{example}
\begin{rem}
\label{associated ordinary} Given a presheaf $\left(\dom\right)^{\t{op}}\xrightarrow{\mathcal{F}}\gpdinf$,
recall that its associated functor of points is the left Kan extension
\[
\xymatrix{\left(\dom\right)^{\t{op}}\ar[r]\sp(0.65){\mathcal{F}}\ar[d]_{q^{\t{op}}} & \gpdinf\\
\affop\ar@{-->}[ur]_{\ \lke q\mathcal{F}}
}
\]
Ultimately, the object we wish to study is $\ \lke q\mathcal{F}$
and its invariants; $\mathcal{F}$ itself is no more than a presentation
of the former.

Noting that $q^{\t{op}}$ is a co-Cartesian fibration, it follows
that for every $S\in\aff$, we have that 
\[
\ \lke q\mathcal{F}\left(S\right)\cong\t{colim}\left(\left(\dom\right)_{S}\xrightarrow{\mathcal{F}}\gpdinf\right)
\]
I.e. the passage from $\mathcal{F}$ to $\ \lke q\mathcal{F}$ simply
identifies generic data which agrees on a smaller domain. Noting further
that $\left(\dom\right)_{S}$ is weakly contractible (it is a filtered
poset), we see that for any $\mathcal{G}\in\shv{\aff}{}$ the transformation
$\left(\ \lke q\circ q_{*}\right)\mathcal{G}\xrightarrow{\cong}\mathcal{G}$
is an equivalence, i.e. the functor $\pshv{\aff}\xrightarrow{q_{*}}\pshv{\dom}$
is fully faithful, and that $\ \lke q$ is a localization. 

We note for later use that $\ \lke q$ is left exact %
\footnote{Preserves finite limits.%
}; this follows from \cite[lemma 2.4.7]{V} after noting that $\dom$
admits all finite limits, and that the functor and $q$ preserves
these. 
\end{rem}

\subsection{D-modules\label{stable observables}}

For the most part the Langlands program is not as much interested
in the set $B\left(K\right)\backslash G\left(\mathbb{A}\right)/G\left(\mathbb{O}\right)$,
as it is in the space of functions on this set. The appropriate geometric
counterpart of this space of functions should be an appropriate category
of sheaves on the space chosen as the geometrization of the set. When
$k$ is of characteristic 0, this category is expected to be the category
of sheaves of D-modules. We now explain how to assign to every moduli
problems of generic data (presheaf on $\dom$) a category of sheaves
of D-modules. 

We consider the totality of D-modules to have the structure of a \emph{stable}
$\infty$-category, instead of a triangulated category (as is more
common); the latter is the homotopy category of the former. For a
short summary with references to notions, notation and ideas in this
below see subsection \ref{sub:Stable-observables}. We denote by $\catexl$
the $\infty$-category of stable $\infty$-categories which are co-complete,
with functors which are colimit preserving (equivalently are left
adjoints). The homotopy category of such a stable infinity category
is a triangulated category; limits in $\catexl$ should be thought
of as homotopy limits of triangulated categories.

There exists a functor $\aff^{\t{op}}\xrightarrow{\dmod}\catexl$
which assigns to a scheme $S$ (thought of as a presheaf via the Yoneda
embedding) the a stable category $\dmod\left(S\right)$, whose homotopy
category is the usual triangulated category of sheaves of D-modules
on $S$, see \ref{sub:Defining dmod} for more details. We think of
$\dmod$ as an invariant defined on schemes (whose values are categories). 

We extend $\dmod$ to arbitrary functors of points via right Kan extension
along the Yoneda embedding.
\[
\xymatrix{\affop\ar[d]^{j}\ar[r]^{\dmod} & \catexl\\
\mathcal{P}shv\left(\aff\right)^{\t{op}}\ar@{-->}[ur]_{\,\,\,\dmod:=\rke{}}
}
\]
So defined, $\dmod$ preserves small limits (\cite[lemma 5.1.5.5]{HT}). 

We further extend $\dmod$ to $\mathcal{P}shv\left(\dom\right)$ by
composing 
\[
\mathcal{P}shv\left(\dom\right)^{\t{op}}\xrightarrow{\ \lke q}\mathcal{P}shv\left(\aff\right)^{\t{op}}\xrightarrow{\dmod}\catexl
\]

\begin{rem}
\label{limit presentation} For $\mathcal{F}\in\pshv{\dom}$, the
category $\dmod\left(\mathcal{F}\right)$ can be presented as a limit
over the category $\left(\left(\dom\right)_{/\mathcal{F}}\right)^{\t{op}}$
\[
\dmod\left(\mathcal{F}\right):=\dmod\left(\ \lke q\mathcal{F}\right)\cong\lim_{\left(S,U\right)\rightarrow\mathcal{F}}\left(\dmod\left(S\right)\right)
\]
The premise of this paper is that a presheaf on $\dom$ is the conceptually
natural way of classifying structures generically defined over $X$.
However, in practice the limit presentation of $\dmod\left(\ \lke q\mathcal{F}\right)$
we obtain as above is unwieldy. In the following sections we shall
construct better, more economical, presentations of this invariant. 
\end{rem}

\subsection{Grothendieck topologies on $\dom$\label{the topology on dom}.}

In \cite[Cor 3.1.4]{GaRo} it is proven that D-modules may be descended
along fppf covers i.e., $\dmod$ factors though fppf sheafification.
For a moduli problem $\mathcal{F}\in\pshv{\dom}$, this ``continuity''
property with respect to the fppf topology can be harnessed to obtain
more economical presentations for the category $\dmod\left(\mathcal{F}\right)$,
than the one given in remark \ref{limit presentation}. To this end
we proceed to define a few natural Grothendieck topologies on $\dom$.

Let $\tau$ be the be either the Zariski, etale or fppf Grothendieck
topology on $\S$. We endow $\dom$ with a corresponding Grothendieck
pulled back from $\S$ using the functor 
\[
\xyR{.5pc}\xymatrix{\dom\ar[r] & \S\\
\left(S,U_{S}\subseteq S\times X\right)\ar@{|->}[r] & U_{S}
}
\xyR{2pc}
\]
Explicitly, a collection of morphisms in $\dom$, $\{\left(S_{i},U_{S_{i}}\subseteq S_{i}\times X\right)\rightarrow\left(S,U_{S}\subseteq S\times X\right)\}$,
is a $\tau$-cover in $\dom$ iff the collection of morphisms $\{U_{S_{i}}\rightarrow U_{S}\}$
is an $\tau$-cover in $\S$. 

Observe that for all the choices of $\tau$ above, the functor $\dom\xrightarrow{q}\aff$
is continuous in the sense that every for cover $\{\left(S_{i},U_{S_{i}}\subseteq S_{i}\times X\right)\rightarrow\left(S,U_{S}\subseteq S\times X\right)\}$
in $\dom$, its image in $\aff$, $\{S_{i}\rightarrow S\}$, is a
cover (this follows from the observation that $U_{S}\rightarrow S$
is a cover in all our topologies). Furthermore, $\dom$ and $\aff$
both admit all finite limits, and $q$ preserves these. By lemma \cite[Lemma 2.4.7]{V}
$q$ induces an adjoint pair of functors, $ $$\shv{\dom}{}\xleftarrow{\left(q^{*},q_{*}\right)}\shv{\aff}{}$
in which the functor $q_{*}$ is pullback along $q$, and the functor
$q^{*}$ is the composition of $\ \lke q$ followed by sheafification
and has the property of preserving finite limits%
\footnote{I.e., a geometric morphism of topoi.%
}. 

The upshot is, that because $\affop\xrightarrow{\dmod}\catexl$ satisfies
fppf descent, its extension $\pshv{\dom}^{\t{op}}\xrightarrow{\dmod}\catexl$
factors through $\shv{\dom;\t{fppf}}{}$. In particular, any map of
presheaves on $\dom$ which induces an equivalence after sheafification,
induces an equivalence D-module categories.

\section{\label{sec:Quasi-maps}Quasi-maps}

Recall example \textbf{\ref{exam:genmaps}}, in which we constructed
a moduli problem $\gmap Y$, classifying generically defined maps
from $X$ to $Y$. In this section we present another approach to
constructing a functor of points for this moduli problem using the
notion of \emph{quasi-maps.} The latter notion is originally due to
Drinfeld, was first described by Finkelberg and Mirkovic in \cite{FM},
and has received a fair amount of attention since. This approach has
the advantage of presenting the space of generic maps as a quotient
of a scheme by a proper (and schematic) equivalence relation.

The main result of this section is to prove that, up to sheafification
in the Zariski topology, both approaches give equivalent functors
of points. Consequently, the associated categories of D-modules are
equivalent.

For the duration of this section fix $Y\hookrightarrow\mathbb{P}^{n}$,
a scheme $Y$ \emph{together with} the data of a quasi-projective
embedding. The space of generic maps constructed using quasi maps
a-priori might depend on this embedding, however it follows from the
equivalence with $\gmap Y$ which we prove that, in fact, it does
not (up to sheafification).

\subsection{Definitions }

First, a minor matter of terminology. Let $\mathcal{V}$ and $\mathcal{W}$
be vector bundles, we distinguish between two properties of a map
of quasi-coherent sheaves $\mathcal{V}\rightarrow\mathcal{W}$: The
map is called a \emph{sub-sheaf embedding} if it is an injective map
of quasi-coherent sheaves. It is called a \emph{sub-bundle embedding}
if it is an injective map of sheaves whose co-kernel is flat (i.e.,
also a vector bundle). The latter corresponds to the notion of a map
between geometric vector bundles which is (fiber-wise) injective. 

\medskip

We start by defining the notion of a \emph{quasi map} from $X$ to
$\mathbb{P}^{n}$. Recall that that a regular map $X\rightarrow\mathbb{P}^{n}$
may be presented by the data of a line bundle $\mathcal{L}$ on $X$
together with a sub-bundle embedding $\mathcal{L}\xrightarrow{\subseteq}\mathcal{O}_{X}^{n+1}$.
A quasi-map from $X$ to $\mathbb{P}^{n}$ is a degeneration of a
regular map consisting of the data of a line bundle $\mathcal{L}$
on $X$, together with a sub-sheaf embedding $\mathcal{L}\hookrightarrow\mathcal{O}_{X}^{n+1}$
(i.e., it may not be a sub-bundle). Observe that to any quasi-map
we may associate the open subscheme $U\subseteq X$ over which $\mathcal{L}\big|_{U}\hookrightarrow\mathcal{O}_{U}^{n+1}$
is a sub-bundle - where it induces a regular map $U\rightarrow\mathbb{P}^{n}$.
In particular, to every quasi-map we may associate a generically defined
map from $X$ to $\mathbb{P}^{n}$%
\footnote{Such a map of course admits an extension to a regular map (in terms
of bundles, every invertible sub-bundle $\mathcal{L}\xrightarrow{\phi}\mathcal{O}_{X}^{n+1}$
extends to a line sub-bundle $\mathcal{L}\hookrightarrow\left(\t{Im}\left(\phi^{\vee}\right)\right)^{\vee}\subseteq\mathcal{O}_{X}^{n+1}$),
but see remark \textbf{\ref{rem:families of generic maps}.}%
}. 
\begin{defn}
Let $\qmap{\mathbb{P}^{n}}:\aff^{\t{op}}\rightarrow\mathbf{Set}$
be the functor of points whose $S$-points are presented by the data
$\left(\mathcal{L},\mathcal{L}\hookrightarrow\mathcal{O}_{S\times X}^{n+1}\right)$,
where $\mathcal{L}$ is a line bundle over $S\times X$, and $\mathcal{L}\hookrightarrow\mathcal{O}_{S\times X}^{n+1}$
is an injection of quasi-coherent sheaves, whose co-kernel is $S$-flat.
\end{defn}
If $Y\hookrightarrow\mathbb{P}^{n}$ is a locally closed subscheme,
then a quasi-map from $X$ to $Y$ should be given by the data of
a quasi-map from $X$ to $\mathbb{P}^{n}$, with the additional property
that the generic point of $X$ maps to $Y$. We proceed to define
this notion in a way better suited for families. 

In the case when $\xymatrix{Y\ar@{^{(}->}[r] & \mathbb{P}^{n}}
$ is a closed subscheme, it is defined by a graded ideal $I_{Y}\subseteq k[x_{0},\ldots,x_{n}]$.
A regular map $X\xrightarrow{f}\mathbb{P}^{n}$, presented by the
data of a sub-bundle $\mathcal{L}\subseteq\mathcal{O}_{X}^{n+1}$,
lands in $Y$ iff the composition 
\[
\t{Sym}_{X}\mathcal{L}^{\vee}\twoheadleftarrow\t{Sym}_{X}\mathcal{O}_{X}^{n+1}\cong\mathcal{O}_{X}\otimes k[x_{0},\ldots,x_{n}]\xleftarrow{}\mathcal{O}_{X}\otimes I_{Y}
\]
vanishes. We degenerate the sub-bundle requirement to obtain the notion
of a quasi map into $Y$:
\begin{defn}
When $Y\hookrightarrow\mathbb{P}^{n}$ is projective embedding, we
define $\qmap Y$ to be the subfunctor of $\qmap{\mathbb{P}^{n}}$
consisting of those points presented by the data $\left(\mathcal{L},\mathcal{L}\hookrightarrow\mathcal{O}_{S\times X}^{n+1}\right)$
such that the composition 
\[
\t{Sym}_{S\times X}\mathcal{L}^{\vee}\xleftarrow{}\t{Sym}_{S\times X}\mathcal{O}_{X}^{n+1}\xleftarrow{}\mathcal{O}_{S\times X}\otimes I_{Y}
\]
vanishes%
\footnote{The definition could have been given more economically, by replacing
the entire symmetric algebras with their finite dimensional subspaces
containing generators of $I_{Y}$.%
}.
\end{defn}
We emphasize that the definition of $\qmap Y$ depends on the embedding
$Y\hookrightarrow\mathbb{P}^{n}$, and not on $Y$ alone. 

The following lemma is well known (see e.g., \cite[lemma 3.3.1]{FM}):
\begin{lem}
$\qmap{\mathbb{P}^{n}}$ is representable by a scheme, which is moreover
a disjoint (infinite) union of projective schemes. 

If $Y\hookrightarrow\mathbb{P}^{n}$ is a projective embedding, then
$\qmap Y\rightarrow\qmap{\mathbb{P}^{n}}$ is a closed embedding.\qed\end{lem}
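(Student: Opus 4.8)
The plan is to prove the two claims separately, in the order they are stated.

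For the first claim, that $\qmap{\mathbb{P}^{n}}$ is representable by a scheme which is a disjoint union of projective schemes, the natural approach is to stratify by the Hilbert polynomial (equivalently, the degree) of the line bundle $\mathcal{L}$. An $S$-point is the data $\left(\mathcal{L},\mathcal{L}\hookrightarrow\mathcal{O}_{S\times X}^{n+1}\right)$ with $S$-flat cokernel; dualizing the injection, this is equivalent to specifying a surjection $\mathcal{O}_{S\times X}^{n+1}\twoheadrightarrow\mathcal{L}^{\vee}$ onto a line bundle --- or, better suited to Quot-scheme language, an $S$-flat quotient of $\mathcal{O}_{S\times X}^{n+1}$ with fixed Hilbert polynomial. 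So I would identify $\qmap{\mathbb{P}^{n}}$ with a (disjoint union over the numerical invariant of) Grothendieck Quot scheme $\t{Quot}\left(\mathcal{O}_{X}^{n+1}/X\right)$, parametrizing the cokernel sheaf together with the quotient map. Grothendieck's theorem then gives representability by a projective scheme for each fixed Hilbert polynomial, since $X$ is projective and we are working over a field $k$; taking the disjoint union over the discrete set of invariants yields the stated disjoint union of projective schemes. The mild subtlety is bookkeeping: verifying that the flatness condition in the definition matches exactly the flatness built into the Quot functor, and that the line-bundle constraint on the cokernel (or the sub-object) cuts out a union of connected components rather than a more complicated locus. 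Since being a line bundle is a locally closed/open condition on a flat family and is locally constant in flat families of the right Hilbert polynomial, this picks out components, which is what we want.

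For the second claim, that $\qmap Y\rightarrow\qmap{\mathbb{P}^{n}}$ is a closed embedding when $Y\hookrightarrow\mathbb{P}^{n}$ is a projective embedding, I would argue directly from the definition. By definition $\qmap Y$ is the subfunctor cut out by the vanishing of the composition
\[
\t{Sym}_{S\times X}\mathcal{O}_{X}^{n+1}\xleftarrow{}\mathcal{O}_{S\times X}\otimes I_{Y}\longrightarrow\t{Sym}_{S\times X}\mathcal{L}^{\vee}.
\]
Over the universal family on $\qmap{\mathbb{P}^{n}}$, this composition is a map of coherent sheaves, and (using the footnote's observation) it suffices to test the vanishing on the finitely many degree pieces containing generators of $I_{Y}$, each a map between coherent sheaves on the base. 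The vanishing locus of such a map of coherent sheaves is a closed subscheme: the scheme-theoretic image of the source sheaf must lie in the kernel, which is represented by a closed condition since we may spread over the projective $X$-direction via relative $\t{Proj}$/pushforward and the base is (locally) Noetherian. Concretely, I would push the relevant finite-dimensional piece of the composition down along the proper projection $S\times X\rightarrow S$ and observe that the locus where a morphism of coherent sheaves vanishes is closed, cut out by the appropriate ideal of entries.

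\textbf{The main obstacle} will be the representability argument, specifically pinning down the precise identification with a Quot scheme and confirming that the numerical invariant stratifies $\qmap{\mathbb{P}^{n}}$ into a disjoint union of \emph{projective} (not merely proper or quasi-projective) pieces. The closed-embedding claim is comparatively routine once representability is in hand, since vanishing of a map of coherent sheaves over a proper base is a standard closed condition. This is why the lemma is flagged as well known with a reference to \cite[lemma 3.3.1]{FM}: the content is a direct application of the theory of Quot schemes, and I would expect the author's proof to either cite the Quot-scheme construction wholesale or sketch the dualization $\mathcal{L}\hookrightarrow\mathcal{O}^{n+1}\rightsquigarrow\mathcal{O}^{n+1}\twoheadrightarrow\mathcal{L}^{\vee}$ and defer to \cite{FM}.
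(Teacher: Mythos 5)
Your proposal is correct, and it is essentially the standard argument; the paper itself offers no proof, deferring entirely to \cite[lemma 3.3.1]{FM}, where precisely this identification of $\qmap{\mathbb{P}^{n}}$ with a disjoint union of Quot schemes $\t{Quot}\left(\mathcal{O}_{X}^{n+1}\right)$ (stratified by the Hilbert polynomial of the quotient) is used. One small simplification you could make: since $X$ is a smooth curve, the kernel of a surjection from $\mathcal{O}_{S\times X}^{n+1}$ onto an $S$-flat quotient of fiberwise generic rank $n$ is automatically an $S$-flat family of line bundles (fiberwise it is a rank-one subsheaf of a locally free sheaf on a smooth curve), so the line-bundle constraint cuts out nothing at all --- each fixed-Hilbert-polynomial component of the Quot scheme lies entirely in $\qmap{\mathbb{P}^{n}}$. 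Your treatment of the closed-embedding claim, via the standard fact that the vanishing locus of a map of coherent sheaves into an $S$-flat target over the proper morphism $S\times X\rightarrow S$ is a closed condition on $S$, is likewise the expected argument.
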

\begin{defn}
If $U\subseteq\mathbb{P}^{n}$ is an open subscheme then we define
\[
\qmap U\subseteq\qmap{\mathbb{P}^{n}}
\]
 to be the open subscheme which is the complement of $\qmap{\mathbb{P}^{n}\setminus U}$
(this is independent of the closed subscheme structure given to $\mathbb{P}^{n}\setminus U$). 

Finally, if $Y\hookrightarrow\mathbb{P}^{n}$ is an arbitrary locally
closed subscheme we define 
\[
\qmap Y=\qmap{\overline{Y}}\cap\qmap{\mathbb{P}^{n}\setminus\left(\overline{Y}\setminus Y\right)}
\]
It is a locally closed subscheme of $\qmap{\mathbb{P}^{n}}$. 

We point out that a map $S\rightarrow\qmap{\overline{Y}}$ lands in
the open subscheme $\qmap Y$ iff for every geometric point $s\in S\left(k\right)$,
the corresponding quasi-map carries the generic point of $X$ into
$Y$. 
\end{defn}

\subsubsection{{}}

In section \ref{sec:Bun_B_bar} we shall need to replace $\qmap Y$
with a relative and twisted version, $\qsect[][S]{S\times X}Y$ corresponding
to a scheme $Y$ over $S\times X$. The details are given at the end
of the section in \ref{sub:Quasi-sections}.

\subsection{Degenerate extensions of generic maps}

Recall the presheaves $\gmap Y$ and $\gmap Y_{\dom}$ introduced
in \ref{exam:genmaps}. There is an evident map 
\[
\qmap Y\rightarrow\gmap Y
\]
via which we think of every quasi map as presenting a generic map.
Namely, for every $S\in\aff$ it is given by the composition 
\[
\qmap Y\left(S\right)\xrightarrow{}\coprod_{\left(S,U\right)\in\dom}\gmap Y_{\dom}\left(S,U\right)\rightarrow\gmap Y\left(S\right)
\]
where the first map is given by sending a quasi-map presented by $\mathcal{L}\hookrightarrow\mathcal{O}_{S\times X}^{n+1}$
to the the open subscheme $U\subseteq S\times X$ where it is flat,
and the the regular map it defines on $U$. However, there is some
redundancy in the presentation of because a generic map may be presented
by several different quasi-maps. We introduce the equivalence relation
$\mathcal{E}_{Y}\subseteq\qmap Y\times\qmap Y$ to be the subfunctor
whose $S$-points are presented by those pairs 
\[
\left(\Big(\mathcal{L},\mathcal{L}\hookrightarrow\mathcal{O}_{S\times X}^{n+1}\Big),\left(\mathcal{L}^{'},\mathcal{L}^{'}\hookrightarrow\mathcal{O}_{S\times X}^{n+1}\right)\right)\in\left(\qmap Y\times\qmap Y\right)\left(S\right)
\]
which agree over the intersection of their regularity domains. Observe
that the following square is Cartesian 

\[
\xymatrix{\mathcal{E}_{Y}\ar[r]\ar[d]\ar@{}[dr]\sb(0.15){\ulcorner} & \qmap Y\ar[d]\\
\qmap Y\ar[r] & \gmap Y
}
\]

\bigskip

\noindent The following lemma is well known, we add a proof for completeness.
\begin{lem}
\label{lem:equiv rel}The equivalence relation $\mathcal{E}_{Y}\rightarrow\qmap Y\times\qmap Y$
is (representable by) a closed subscheme. Both the projections $\mathcal{E}_{Y}\rightarrow\qmap Y$
are proper.\end{lem}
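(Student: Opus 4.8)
\emph{Closedness.} Write $Z=\qmap Y\times\qmap Y$ and let $\mathcal{L}_{1},\mathcal{L}_{2}\hookrightarrow\mathcal{O}_{Z\times X}^{n+1}$ be the two universal quasi-maps pulled back from the two factors. First I would form the $\mathcal{O}_{Z\times X}$-linear map
\[
\phi\colon\mathcal{L}_{1}\otimes\mathcal{L}_{2}\longrightarrow\textstyle\bigwedge^{2}\mathcal{O}_{Z\times X}^{n+1},\qquad s_{1}\otimes s_{2}\mapsto\iota_{1}(s_{1})\wedge\iota_{2}(s_{2}),
\]
whose target is a trivial bundle. On the open locus where both $\mathcal{L}_{i}$ are sub-bundles, the vanishing of $\phi$ is exactly the condition that the two lines in $\mathcal{O}^{n+1}$ coincide, i.e. that the two regular maps to $\mathbb{P}^{n}$ agree. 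I claim that $\mathcal{E}_{Y}$ is the closed subscheme of $Z$ representing the locus where $\phi$ vanishes identically along the fibres of the projection $Z\times X\to Z$ (proper, since $X$ is projective). That such a locus is closed is the standard representability of fibrewise vanishing of a section of a $Z$-flat coherent sheaf — here $\phi$ is a global section of the locally free $\mathcal{H}om(\mathcal{L}_{1}\otimes\mathcal{L}_{2},\bigwedge^{2}\mathcal{O}^{n+1})$ — which one checks component by component after a relative twist and cohomology-and-base-change.

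\emph{Identification.} The inclusion of the fibrewise-vanishing locus into $\mathcal{E}_{Y}$ is clear. For the reverse inclusion the delicate input is the following density statement, valid for an arbitrary (possibly non-reduced) test scheme $S$: the open $U_{1}\cap U_{2}\subseteq S\times X$ on which both quasi-maps are regular is \emph{schematically} dense. Indeed, since $X$ is a smooth geometrically integral curve, the projection $S\times X\to S$ is flat with integral fibres, so every associated point of $S\times X$ is the generic point of some fibre $X_{\kappa(s)}$; as each $U_{i}$ contains the generic point of every fibre (flatness of the cokernel keeps $\mathcal{L}_{i}$ fibrewise a nonzero subsheaf of $\mathcal{O}^{n+1}$ on a smooth curve, hence a sub-bundle at the generic point), the intersection $U_{1}\cap U_{2}$ contains $\mathrm{Ass}(S\times X)$. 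A section of a locally free sheaf vanishing on a schematically dense open vanishes; applying this to $\phi$ shows that agreement on $U_{1}\cap U_{2}$ forces $\phi=0$ everywhere, giving $\mathcal{E}_{Y}(S)\subseteq(\text{fibrewise-vanishing locus})(S)$.

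\emph{Properness.} Here the key move is to reduce to the case $Y=\mathbb{P}^{n}$. Two equivalent quasi-maps induce the \emph{same} generically defined map, and whether a quasi-map lands in $Y$ depends only on this generic map (the image of the generic point of $X$); hence for $(q_{1},q_{2})\in\mathcal{E}_{\mathbb{P}^{n}}$ one has $q_{1}\in\qmap Y\iff q_{2}\in\qmap Y$ — an equivalence I would again justify schematically using the density above for the closed condition $\in\qmap{\overline{Y}}$ and on geometric points for the remaining open condition. Consequently
\[
\mathcal{E}_{Y}\;=\;\mathcal{E}_{\mathbb{P}^{n}}\times_{\,\mathrm{pr}_{1},\,\qmap{\mathbb{P}^{n}}}\qmap Y,
\]
so $\mathrm{pr}_{1}\colon\mathcal{E}_{Y}\to\qmap Y$ is the base change of $\mathrm{pr}_{1}\colon\mathcal{E}_{\mathbb{P}^{n}}\to\qmap{\mathbb{P}^{n}}$ and it suffices to prove properness of the latter. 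For $Y=\mathbb{P}^{n}$ I would work component by component: on the summand indexed by degrees $(d,d')$, $\mathcal{E}_{\mathbb{P}^{n}}$ is closed in $\qmap{\mathbb{P}^{n}}_{d}\times\qmap{\mathbb{P}^{n}}_{d'}$, and since $\qmap{\mathbb{P}^{n}}_{d'}$ is projective over $k$ (by the representability lemma above) the projection to $\qmap{\mathbb{P}^{n}}_{d}$ is proper; restricting to a closed subscheme preserves properness. The second projection follows from the symmetry of $\mathcal{E}_{Y}$ under exchanging factors.

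The main obstacle is thus twofold. For closedness it is the schematic-density input over non-reduced bases, which is exactly where the hypothesis that $X$ is a smooth curve enters. For properness it is the base-change identity reducing $Y$ to $\mathbb{P}^{n}$, i.e. the observation that the equivalence of quasi-maps is both detected by and preserves the underlying generic map — this is what repairs the failure of $\qmap Y$ to be proper when $Y$ is only quasi-projective. Finally I should record that, $\qmap{\mathbb{P}^{n}}$ being an infinite disjoint union of projective schemes, ``proper'' is to be read on each connected component (equivalently after fixing the two degrees), which is the only sense in which the statement can hold.
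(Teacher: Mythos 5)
Your proof is correct and follows essentially the same route as the paper: for $Y=\mathbb{P}^{n}$ generic equivalence is rewritten as the vanishing, on all of $S\times X$, of a map of vector bundles (you use the symmetric wedge $\mathcal{L}_{1}\otimes\mathcal{L}_{2}\rightarrow\bigwedge^{2}\mathcal{O}^{n+1}$ where the paper uses the asymmetric composite $\mathcal{L}_{\psi}\rightarrow\mathcal{M}$ with $\mathcal{M}^{\vee}$ surjecting onto $\ker\left(\kappa_{\phi}^{\vee}\right)$), and the general quasi-projective case is reduced to $\mathbb{P}^{n}$ by exactly the paper's Cartesian-square observation that generic equivalence preserves the condition of landing in $Y$. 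Your write-up is in fact slightly more careful on two points the paper leaves implicit: the schematic density of the common regularity locus over a possibly non-reduced base, and the fact that properness of $\mathcal{E}_{\mathbb{P}^{n}}\rightarrow\qmap{\mathbb{P}^{n}}$ must be read component by component, since the fibers meet infinitely many components of $\qmap{\mathbb{P}^{n}}$.
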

\begin{proof}
Let us first consider the case $Y=\mathbb{P}^{n}$, and show that
the subfunctor 
\[
\mathcal{E}_{\mathbb{P}^{n}}\rightarrow\qmap{\mathbb{P}^{n}}\times\qmap{\mathbb{P}^{n}}
\]
is a closed embedding. 

We start by examining when two quasi-maps $S\xrightarrow{\phi,\psi}\qmap{\mathbb{P}^{n}}$
are generically equivalent, i.e., map to the same $S$-point of $\gmap{\mathbb{P}^{n}}$.
Let $\phi$ and $\psi$ be presented by invertible sub-sheaves 
\[
\xymatrix{\mathcal{L}_{\phi}\ar@{^{(}->}[r]^{\kappa_{\phi}} & \mathcal{O}_{S\times X}^{n+1}}
\,\,\,\t{and}\,\,\,\xymatrix{\mathcal{L}_{\psi}\ar@{^{(}->}[r]^{\kappa_{\psi}} & \mathcal{O}_{S\times X}^{n+1}}
\]
 whose co-kernels are $S$-flat. Let $U\subseteq S\times X$ be the
open subscheme where $\mathcal{L}_{\phi}\Big|_{U}\hookrightarrow\mathcal{O}_{U}^{n+1}$
is sub-bundle, and thus a maximal invertible sub-bundle. The points
$\phi$ and $\psi$ are generically equivalent iff $\mathcal{L}_{\psi}\Big|_{U}$
is a subsheaf of $\mathcal{L}_{\phi}\Big|_{U}$ (both viewed as subsheaves
of $\mathcal{O}_{U}^{n+1}$). 

Fix a vector bundle, $\mathcal{M}$, on $S\times X$ whose dual surjects
on the kernel as indicated below 
\[
\mathcal{L}_{\phi}^{\vee}\xleftarrow{\kappa_{\phi}^{\vee}}\left(\mathcal{O}_{S\times X}^{n+1}\right)^{\vee}\hookleftarrow\t{Ker}\left(\kappa_{\phi}^{\vee}\right)\twoheadleftarrow\mathcal{M}^{\vee}
\]
Dualizing and restricting to $U$ we have 
\[
\xymatrix{ & \mathcal{L}_{\psi}\Big|_{U}\ar[d]\\
\mathcal{L}_{\phi}\Big|_{U}\ar[r]_{\kappa_{\phi}}^{\subseteq} & \mathcal{O}_{U_{0}}^{n+1}\ar@{->>}[r] & \t{coker}\left(\kappa_{\phi}\right)\ar[r]^{\subseteq} & \mathcal{M}\Big|_{U}
}
\]
where map $\t{coker}\left(\kappa\right)\rightarrow\mathcal{M}$ is
injective (in fact a sub-bundle). Thus, $\phi$ and $\psi$ are generically
equivalent iff the composition $\mathcal{L}_{\psi}\Big|_{U}\rightarrow\mathcal{M}\Big|_{U}$
vanishes on $U$ iff $\mathcal{L}_{\psi}\rightarrow\mathcal{M}$ vanishes
on all of $S\times X$ (since $U\subseteq S\times X$ is dense, and
both sheaves are vector bundles).

For an arbitrary quasi-projective scheme, $Y\hookrightarrow\mathbb{P}^{n}$,
the lemma now follows from the Cartesianity of the squares below,
using the fact that both right vertical maps are proper 
\[
\xymatrix{\mathcal{E}_{Y}\ar[r]\ar[d] & \mathcal{E}_{\mathbb{P}^{n}}\ar[d]\\
\qmap Y\times\qmap{\mathbb{P}^{n}}\ar[d]\ar[r] & \qmap{\mathbb{P}^{n}}\times\qmap{\mathbb{P}^{n}}\ar[d]^{\pi_{1}}\\
\qmap Y\ar[r] & \qmap{\mathbb{P}^{n}}
}
\]

\end{proof}
\bigskip

\noindent Denote the evident simplicial object in $\pshv{\aff}$ 

\begin{equation}
\xymatrix{\cdots\ar@<2ex>[r]\ar@<-2ex>[r]\ar@{}@<0.5ex>[r]|(0.3){\fixedvdots} & \mathcal{E}_{Y}\times_{\qmap Y}\mathcal{E}_{Y}\ar[r]\ar@<2ex>[r]\ar@<-2ex>[r] & \mathcal{E}_{Y}\ar@<1ex>[r]\ar@<-1ex>[r]\ar@<1ex>[l]\ar@<-1ex>[l] & \qmap Y\ar[l]}
\label{eq:simplicial presentation}
\end{equation}
by 

\[
\xyR{0.5pc}\xymatrix{\Delta^{\t{op}}\ar[r]\sp(0.35){\mathcal{E}_{Y}^{\bullet}} & \pshv{\aff}\\
{}[n]\ar@{|->}[r] & \mathcal{E}_{Y}^{\left(n\right)}
}
\xyR{2pc}
\]
where 
\[
\mathcal{E}_{Y}^{\left(n\right)}:=\underbrace{\mathcal{E}_{Y}\times_{\qmap Y}\cdots\times_{\qmap Y}\mathcal{E}_{Y}}_{n-\t{times}}
\]

\smallskip

We denote by $\qmap Y/\mathcal{E}_{Y}$ the functor of points which
is the quotient by this equivalence relation - the colimit of this
simplicial object. However, in this case it simply reduces to the
naive pointwise quotient of sets 
\[
\left(\qmap Y/\mathcal{E}_{Y}\right)\left(S\right)=\qmap Y\left(S\right)/\mathcal{E}_{Y}\left(S\right)
\]
because $\mathcal{E}_{Y}\left(S\right)\subseteq\left(\qmap Y\left(S\right)\right)^{\times2}$
is an equivalence relation in sets.

\medskip

The functor of points $\qmap Y/\mathcal{E}_{Y}$ presents another
candidate for the ``space of generic maps'', a-priori different
from $\gmap Y$. Relative to $\gmap Y$, it has the advantage of being
concisely presented as the quotient of a scheme by a proper (schematic)
equivalence relation. The following proposition shows that both functors
are essentially equivalent, and in particular that (up to Zariski
sheafification) $\qmap Y/\mathcal{E}_{Y}$ is independent of the quasi-projective
embedding $Y\hookrightarrow\mathbb{P}^{n}$.
\begin{prop}
\label{prop:quasimap equiv}The map $\qmap Y\rightarrow\gmap Y$ induces
a map of presheaves 
\[
\qmap Y/\mathcal{E}_{Y}\xrightarrow{}\gmap Y
\]
which becomes an equivalence after Zariski sheafification.
\end{prop}
\noindent We prove this proposition, after some preparations, in
\ref{proof:prop:quasimap equiv}. First, a couple of consequences:
\begin{cor}
The Zariski sheafification of the presheaf $\qmap Y/\mathcal{E}_{Y}$
is independent of the quasi-projective embedding $Y\hookrightarrow\mathbb{P}^{n}$.
\qed
\end{cor}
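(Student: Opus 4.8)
My plan is to factor the claim into two parts: that the map $\qmap Y/\mathcal{E}_{Y}\to\gmap Y$ is a monomorphism of presheaves, and that it is a \emph{local epimorphism} for the Zariski topology. Since Zariski sheafification is left exact it preserves monomorphisms, and a map of (set-valued) sheaves that is both a monomorphism and a local epimorphism is an equivalence; so the two parts together give the proposition. The monomorphism is formal: fibre products of presheaves are computed objectwise, so the Cartesian square that defines $\mathcal{E}_{Y}$ exhibits $\mathcal{E}_{Y}\left(S\right)$ as the kernel pair of $\qmap Y\left(S\right)\to\gmap Y\left(S\right)$. As recorded just before the statement, the quotient is the naive quotient of sets, whence $\left(\qmap Y/\mathcal{E}_{Y}\right)\left(S\right)=\qmap Y\left(S\right)/\mathcal{E}_{Y}\left(S\right)$ injects into $\gmap Y\left(S\right)$ for every $S\in\aff$.

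The content is the local epimorphism, and I would establish it first for $Y=\mathbb{P}^{n}$. Fix $S\in\aff$ and a class $\eta\in\gmap{\mathbb{P}^{n}}\left(S\right)$; since this groupoid is $\colim_{\left(\dom\right)_{S}}$, it is represented by a family of domains $U\subseteq S\times X$ and a regular map $f\colon U\to\mathbb{P}^{n}$, i.e. a sub-bundle $\mathcal{L}_{U}\hookrightarrow\mathcal{O}_{U}^{n+1}$, and its class is unchanged if we shrink $U$. I must produce, after a Zariski cover of $S$, a quasi-map restricting to $f$. The observation that organises the construction is a fiberwise criterion for the defining flatness condition: for a line bundle $\mathcal{L}$ on $S\times X$ and a map $\phi\colon\mathcal{L}\to\mathcal{O}_{S\times X}^{n+1}$, the cokernel is $S$-flat precisely when $\phi$ is nonzero on every fibre $s\times X$ (over the smooth relative curve $S\times X\to S$ both amount to fiberwise injectivity of $\phi$). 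So it is enough to extend $\mathcal{L}_{U}$ and its inclusion to a line bundle $\mathcal{L}$ on all of $S\times X$ together with a fiberwise nonzero map into $\mathcal{O}_{S\times X}^{n+1}$.

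To build this extension I would use that the complement $Z=\left(S\times X\right)\setminus U$ is finite over $S$ (closed in the proper relative curve with finite fibres). By relative ampleness of $p_{X}^{*}\mathcal{O}_{X}\left(1\right)$, for $m\gg0$ the twisted ideal $\mathcal{I}_{Z}\otimes p_{X}^{*}\mathcal{O}_{X}\left(m\right)$ is relatively globally generated with pushforward commuting with base change; hence, Zariski-locally on $S$, I can pick a section cutting out a relative effective Cartier divisor $D\supseteq Z$ that is nonzero on every fibre. Shrinking $U$ to $\left(S\times X\right)\setminus D$, I may assume the complement of $U$ is this relatively ample divisor $D$, so that $U$ is affine over $S$ and $\bigcup_{k}\Gamma\left(S\times X,\mathcal{M}\left(kD\right)\right)=\Gamma\left(U,\mathcal{M}|_{U}\right)$ for any line bundle $\mathcal{M}$ on $S\times X$. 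Granting an extension of $\mathcal{M}_{U}:=\mathcal{L}_{U}^{\vee}$ to a line bundle $\mathcal{M}$ on $S\times X$, the $n+1$ sections of $\mathcal{M}_{U}$ cutting out $f$ become honest sections of $\mathcal{M}\left(kD\right)$ for $k\gg0$; dualising gives the desired $\phi\colon\mathcal{L}\to\mathcal{O}_{S\times X}^{n+1}$ with $\mathcal{L}=\mathcal{M}\left(kD\right)^{\vee}$. Here fiberwise non-vanishing of $\phi$ is automatic: on each fibre the sections restrict to the base-point-free data defining $f$ on the dense open $U_{s}\subseteq X_{s}$, hence cannot all vanish. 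By the fiberwise criterion $\phi$ is a quasi-map, and it restricts to $f$ on $U$, so it lifts $\eta$.

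The step I expect to be the real obstacle is precisely the one I flagged as \emph{granted}: extending the line bundle $\mathcal{M}_{U}$ to a genuine line bundle on $S\times X$ in families --- equivalently, the Zariski-local surjectivity of $\t{Pic}\left(S\times X\right)\to\t{Pic}\left(U\right)$. This is where Zariski localisation on $S$ is essential and where the geometry of $X$ (a curve with a rational point, so that the Picard group of the relative curve is controlled) and the finite-flatness of $D$ must be exploited; one must also check that the resulting subsheaf is locally free and that the cokernel is genuinely $S$-flat rather than merely generically so. Once $Y=\mathbb{P}^{n}$ is settled, a general locally closed $Y\hookrightarrow\mathbb{P}^{n}$ follows by transport: the quasi-map constructed from $f$ degenerates a map whose image lies in $Y$, so it satisfies the closed condition defining $\qmap{\overline{Y}}$ (vanishing of $I_{\overline{Y}}$, which holds on the dense $U$ hence everywhere) and, by the pointwise characterisation recorded after the definition of $\qmap Y$, lies in the open locus $\qmap Y$. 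This produces the required local lift over $Y$ and completes the argument.
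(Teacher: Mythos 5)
Your overall strategy is the paper's: the corollary is immediate once one knows that $\qmap Y/\mathcal{E}_{Y}\rightarrow\gmap Y$ becomes an equivalence after Zariski sheafification (Proposition \ref{prop:quasimap equiv}), since $\gmap Y$ makes no reference to the embedding; and your decomposition into ``monomorphism'' (formal, from the Cartesian square defining $\mathcal{E}_{Y}$ and the pointwise description of the quotient) plus ``Zariski-local epimorphism'' is exactly how the paper proves that proposition. The reduction to $Y=\mathbb{P}^{n}$, the fiberwise flatness criterion (lemma \ref{lem:density and flatness}), and the use of a divisor complement together with relative ampleness to extend the $n+1$ sections (the standard extension lemma the paper quotes from Hartshorne) all match.

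The genuine gap is the one you flag yourself: extending the line bundle $\mathcal{M}_{U}=\mathcal{L}_{U}^{\vee}$ from $U$ to $S\times X$. As stated, the Zariski-local surjectivity of $\t{Pic}\left(S\times X\right)\rightarrow\t{Pic}\left(U\right)$ is not something you can take for granted, and your argument stalls there. The paper's fix is to never pose that extension problem: since the class in $\gmap Y\left(S\right)$ is unchanged under shrinking the domain, one first refines the cover so that the line bundle becomes trivial. Concretely (lemma \ref{lem:trivializing cover}), cover $U$ by finitely many opens $U_{i}$ which are simultaneously divisor complements in $S_{i}\times X$ (where $S_{i}\subseteq S$ is the open image of $U_{i}$) and which trivialize $\mathcal{L}_{U}$; the $S_{i}$ cover $S$ because $U$ is universally dense. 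Over each piece one then only needs to extend the trivial bundle, which is done by $\mathcal{N}^{\otimes l}$ for the Cartier divisor $\mathcal{N}$ cutting out the complement, exactly as in your relative-ampleness step. If you insert this trivialization before your extension argument, your proof closes and coincides with the paper's.
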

The main invariant of $\gmap Y$ which we wish to study in this paper
is homology, and by extension the category of D-modules (see \ref{sub:homology of a functor}).
The following corollary is to be interpreted as providing a convenient
presentation of this category of D-modules, and using this presentation
to deduce the existence of a de-Rham cohomology functor (the left
adjoint to pullback).
\begin{cor}
\label{cor:dmod equiv gmap}~
\begin{enumerate}
\item Pullback induces an equivalence 
\[
\lim_{[n]\in\Delta^{\t{op}}}\dmod\left(\mathcal{E}_{Y}^{\left(n\right)}\right)\xleftarrow{\cong}\dmod\left(\gmap Y\right)
\]

\item Consider the pullback functors 
\[
\dmod\left(\qmap Y\right)\xleftarrow{f^{!}}\dmod\left(\gmap Y\right)\xleftarrow{t^{!}}\dmod\left(\t{spec}\left(k\right)\right)
\]
The functor $f^{!}$ always admits a left adjoint (``!-push-forward'').
When $Y\hookrightarrow\mathbb{P}^{n}$ is a closed embedding, the
functor $t^{!}$ also admits a left adjoint. 
\end{enumerate}
\end{cor}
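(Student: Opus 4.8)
The plan is to derive both statements from the descent result of Proposition \ref{prop:quasimap equiv} together with the fact that $\dmod$ converts colimits of presheaves into limits in $\catexl$. For part (1) I would first invoke that $\dmod$ factors through Zariski sheafification (subsection \ref{the topology on dom}), so that the Zariski-local equivalence $\qmap Y/\mathcal{E}_Y \to \gmap Y$ of Proposition \ref{prop:quasimap equiv} induces an equivalence $\dmod(\gmap Y) \cong \dmod(\qmap Y/\mathcal{E}_Y)$. Since $\qmap Y/\mathcal{E}_Y$ is by construction the colimit in $\pshv\aff$ of the simplicial object \eqref{eq:simplicial presentation}, and since $\dmod$ preserves small limits (subsection \ref{stable observables}) --- equivalently, sends colimits of presheaves to limits in $\catexl$ --- applying $\dmod$ turns this colimit presentation into the asserted totalization $\lim_{[n]}\dmod(\mathcal{E}_Y^{(n)})$, with the leg at $[0]$ being exactly $f^!$.

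For part (2a), the crucial observation is that the Cartesian square preceding Lemma \ref{lem:equiv rel} identifies $\mathcal{E}_Y \cong \qmap Y \times_{\gmap Y} \qmap Y$, so that \eqref{eq:simplicial presentation} is the \v{C}ech nerve of $f : \qmap Y \to \gmap Y$ and, under part (1), $f^!$ is the evaluation-at-$[0]$ functor out of the totalization. Every face map of this \v{C}ech nerve is obtained from the two projections $\mathcal{E}_Y \to \qmap Y$ by base change, and these are proper by Lemma \ref{lem:equiv rel}; hence each coface functor of the cosimplicial category $\dmod(\mathcal{E}_Y^\bullet)$ is a $!$-pullback along a proper map, which admits a left adjoint ($!$-pushforward, agreeing with $*$-pushforward) satisfying proper base change. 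I would then invoke the standard principle that, under this Beck--Chevalley compatibility, the evaluation functor out of a $\Delta$-indexed limit in $\catexl$ admits a left adjoint --- computed by the evident bar construction, i.e. the geometric realization over $\Delta^{\t{op}}$ of the pushforwards around the groupoid. This produces the $!$-push-forward left adjoint to $f^!$.

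For part (2b), write $t$ for the structure map $\gmap Y \to \spec(k)$, so that $t^!$ is the constant-sheaf functor $\vect \to \dmod(\gmap Y)$ which, through the limit presentation, is assembled from the pullbacks $\pi_n^! : \vect \to \dmod(\mathcal{E}_Y^{(n)})$ along the structure maps $\pi_n : \mathcal{E}_Y^{(n)} \to \spec(k)$. When $Y \hookrightarrow \mathbb{P}^n$ is a closed embedding, $\qmap Y \to \qmap{\mathbb{P}^n}$ is a closed embedding into a disjoint union of projective schemes, so $\qmap Y$ --- and hence each $\mathcal{E}_Y^{(n)}$, a closed subscheme of $(\qmap Y)^{\times(n+1)}$ --- is again a disjoint union of projective schemes. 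Thus every $\pi_n$ is proper on each component, so each $\pi_n^!$ admits a left adjoint $\pi_{n,!}$ (de Rham homology, the coproduct of the proper pushforwards over the components), and these satisfy proper base change along the face maps. Exactly as in (2a), the Beck--Chevalley condition lets the level-wise left adjoints glue into a left adjoint $t_!$ of $t^!$ on the totalization.

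The hard part will not be the geometry --- properness is supplied by Lemma \ref{lem:equiv rel} and by the structure of $\qmap{\mathbb{P}^n}$ --- but the categorical step shared by (2a) and (2b): checking that the level-wise left adjoints are compatible with the cosimplicial structure (proper base change for D-modules) and then invoking the principle that a $\Delta$-indexed limit in $\catexl$ whose coface functors carry base-change-compatible left adjoints has its structure functors equipped with left adjoints. I would isolate this as a single lemma and reduce both (2a) and (2b) to one application of it, the only difference being which maps one takes to be proper: all face maps (for every $Y$) in (2a), and in addition the maps to $\spec(k)$ (only when $Y$ is closed) in (2b).
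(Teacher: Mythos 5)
Your treatment of part (1) coincides with the paper's, which simply cites Proposition \ref{prop:quasimap equiv}: the facts that $\dmod$ factors through Zariski sheafification and converts the colimit of the simplicial object \eqref{eq:simplicial presentation} into a totalization are exactly the implicit steps, and your leg-at-$[0]$ identification of $f^{!}$ is the intended one. For part (2), however, you take a genuinely different formal route. The paper does not use proper base change at all: it uses properness of the maps in $\mathcal{E}_{Y}^{\bullet}$ (and, for (2b), of the $\mathcal{E}_{Y}^{\left(n\right)}$ themselves) only to produce level-wise left adjoints, and then invokes the purely formal ``adjoint diagrams'' Lemma \ref{lem:adjoint diagrams} from the appendix, which asserts that for any diagram in $\catexl$ whose functors admit left adjoints one has $\underset{\Delta}{\colim}\,\dmod_{!}\left(\mathcal{E}_{Y}^{\bullet}\right)\cong\underset{\Delta^{\t{op}}}{\lim}\,\dmod^{!}\left(\mathcal{E}_{Y}^{\bullet}\right)$ with corresponding projection and inclusion functors adjoint; existence of the left adjoints to $f^{!}$ and $t^{!}$ then falls out with no compatibility condition whatsoever between the level-wise adjoints, the whole content being the duality $\catexr\xrightarrow{\cong}\left(\catexl\right)^{\t{op}}$. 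Your route instead verifies a Beck--Chevalley condition (proper base change of $!$-pushforward against $!$-pullback along the face maps) and appeals to an adjointability-of-limits principle. This is valid --- base change does hold here --- and has the advantage of exhibiting the left adjoint concretely as a bar construction over $\Delta^{\t{op}}$, but it is strictly more input than the existence statement requires, and the gluing principle you invoke is itself less elementary than the paper's self-contained lemma; if you follow your route you owe a proof or a precise citation for that principle, whereas the paper reduces everything to Lemma \ref{lem:adjoint diagrams}.
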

The second assertion above is kind of ``properness'' property of
the (non-representable) map $\qmap Y\xrightarrow{}\gmap Y$, and the
functor of points $\gmap Y$ (when $Y\hookrightarrow\mathbb{P}^{n}$
is a closed embedding). 

The following remark is not used in the rest of the article. We point
it out for future use.
\begin{rem}
Corollary \ref{cor:dmod equiv gmap} implies that $\dmod\left(\gmap Y\right)$
is compactly generated. Namely, the pushforwards of compact generators
of $\dmod\left(\qmap Y\right)$ are generate because $f^{!}$ is faithful
(as is evident from (1)), and are compact because $f^{!}$ is colimit
preserving. \end{rem}
\begin{proof}
(1) is an immediate consequence of proposition \ref{prop:quasimap equiv}. 

Regarding (2), it follows from lemma \ref{lem:equiv rel} that all
the maps in $\mathcal{E}_{Y}^{\bullet}$ are proper, hence, on the
level of D-module categories, each pull-back functor admits a left
adjoint (a ``!-pushforward''). Consequently, the object assignment
\[
[n]\in\Delta\mapsto\dmod\left(\mathcal{E}_{Y}^{n}\right)\in\catexl
\]
extends to both a co-simplicial diagram (implicit in the statement
of (1)) as well as a simplicial diagram. In the former, which we denote
\[
\dmod^{!}\left(\mathcal{E}_{Y}^{\bullet}\right):\Delta\rightarrow\catexl
\]
functors are given by pullback. In the latter, which we denote 
\[
\dmod_{!}\left(\mathcal{E}_{Y}^{\bullet}\right):\Delta^{\t{op}}\rightarrow\catexl
\]
the functors are given by the left adjoints to pullback (!-pushforward).
When $Y\hookrightarrow\mathbb{P}^{n}$ is a closed embedding, each
of the $\mathcal{E}_{Y}^{\left(n\right)}$'s is proper, hence the
pushforward diagram is augmented over $\dmod\left(\spec\left(k\right)\right)$. 

Under the equivalence of (1), the functors whose adjoints we wish
to construct are identified with 
\[
\dmod\left(\qmap Y\right)\xleftarrow{}\lim_{\Delta^{\t{op}}}\dmod^{!}\left(\mathcal{E}_{Y}^{\bullet}\right)\xleftarrow{}\dmod\left(\spec\left(k\right)\right)
\]

The setup above falls into the general framework \emph{adjoint diagrams}
which we discuss in the appendix, where it is proven (\ref{lem:adjoint diagrams})
that there exists an equivalence $\underset{\Delta}{\colim}\dmod_{!}\left(\mathcal{E}_{Y}^{\bullet}\right)\xrightarrow{\cong}\underset{\Delta^{\t{op}}}{\lim}\dmod^{!}\left(\mathcal{E}_{Y}^{\bullet}\right)$,
and that under this equivalence, the pair of natural maps 
\[
\dmod\left(\qmap Y\right)\xleftarrow{}\lim_{\Delta^{\t{op}}}\dmod^{!}\left(\mathcal{E}_{Y}^{\bullet}\right)
\]
\[
\dmod\left(\qmap Y\right)\xrightarrow{}\underset{\Delta}{\colim}\dmod_{!}\left(\mathcal{E}_{Y}^{\bullet}\right)
\]
are adjoint functors.

Likewise in the case when $Y\hookrightarrow\mathbb{P}^{n}$ is a closed
embedding we conclude that 
\[
\lim_{\Delta^{\t{op}}}\dmod^{!}\left(\mathcal{E}_{Y}^{\bullet}\right)\xleftarrow{}\dmod\left(\spec\left(k\right)\right)\text{;\,\,\,\,\,\,}\underset{\Delta}{\colim}\dmod_{!}\left(\mathcal{E}_{Y}^{\bullet}\right)\xrightarrow{}\dmod\left(\spec\left(k\right)\right)
\]
are of adjoint functors. 
\end{proof}
\noindent We proceed with the preparations for the proof of Proposition
\ref{prop:quasimap equiv}.

\subsubsection{Divisor complements\label{sub:Divisor-complements}}

Recall that an effective Cartier divisor, on a scheme $Y$, is the
data of a line bundle $\mathcal{L}$ together with an injection of
coherent sheaves $\mathcal{L}\hookrightarrow\mathcal{O}_{Y}$. The
complement of the support of $\mathcal{O}_{S\times X}/\mathcal{L}$
is an open subscheme, $U_{\mathcal{L}}\subseteq Y$. We call an open
subscheme arising in this way a \emph{divisor complement}.
\begin{lem}
\label{lem:trivializing cover}Let $\left(S,U\right)\in\dom$, and
let $\mathcal{L}_{U}$ be a line bundle on $U\subseteq S\times X$.
There exists a finite Zariski cover 
\[
\left\{ \left(S_{i},U_{i}\right)\rightarrow\left(S,U\right)\right\} _{i\in I}
\]
such that for every $i$ the open subscheme $U_{i}\subseteq S_{i}\times U_{i}$
is a divisor complement. Moreover, we can choose each $U_{i}$ so
that $\mathcal{L}_{U}\Big|_{U_{i}}$ is a trivial line bundle. \end{lem}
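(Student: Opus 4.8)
The plan is to construct the cover by hand, one point of $U$ at a time, exploiting that $S\times X$ is projective over the affine (hence Noetherian) base $S$, with relatively ample line bundle $\mathcal{O}(1):=\mathrm{pr}_X^{*}\mathcal{O}_X(1)$ for a fixed ample $\mathcal{O}_X(1)$ on the projective curve $X$. First I would dispose of the trivialization clause by turning it into a vanishing condition: choose a finite open cover $\{V_k\}$ of $U$ on which $\mathcal{L}_U$ is trivial (line bundles are Zariski-locally trivial), so that it suffices to produce, for each $u\in U$, a divisor complement $W$ with $u\in W\subseteq V_{k(u)}$ for some index $k(u)$ with $u\in V_{k(u)}$; then $W\subseteq U$ and $\mathcal{L}_U|_{W}$ is automatically trivial.

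The core step is to realize such a $W$ as the non-vanishing locus of a single section. Fix $u\in U$, lying over $s_0\in S$, and set $C:=(S\times X)\setminus V_{k(u)}$, a closed subset with $u\notin C$. Since $\mathcal{O}(1)$ is relatively ample and $S$ is affine, relative Serre's theorem provides $n\gg0$ for which the twist $\mathcal{I}_C(n)$ of the ideal sheaf $\mathcal{I}_C$ of $C$ is globally generated; because $u\notin C$ the stalk $(\mathcal{I}_C)_u$ equals $\mathcal{O}_{S\times X,u}$, so I can pick a global section $\sigma\in H^0(S\times X,\mathcal{I}_C(n))\subseteq H^0(S\times X,\mathcal{O}(n))$ with $\sigma(u)\neq0$. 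By construction $\sigma$ vanishes on $C$, whence $(S\times X)\setminus\{\sigma=0\}\subseteq V_{k(u)}\subseteq U$ and contains $u$.

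The main obstacle — and the reason keeping $S$ fixed fails — is that $\sigma$ need not be a non-zero-divisor on all of $S\times X$, so it may fail to cut out an effective Cartier divisor, and the resulting open may miss whole fibres (so fail to lie in $\dom$). Writing $\sigma=\sum_{l}a_l\otimes t_l$ under the Künneth identification $H^0(S\times X,\mathcal{O}(n))=\Gamma(\mathcal{O}_S)\otimes_k H^0(X,\mathcal{O}_X(n))$, the locus $S^{\mathrm{bad}}:=\{s:\sigma|_{s\times X}\equiv0\}=V(\{a_l\})$ is closed and omits $s_0$. I would therefore pass to the open $S\setminus S^{\mathrm{bad}}$ and cover it by finitely many distinguished affine opens $S'$. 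On each $S'\times X$ the section $\sigma$ is nonzero on every closed fibre; since $X$ is integral and $S'\times X\to S'$ is smooth, the associated points of $S'\times X$ are the generic points of $\overline{\{s\}}\times X$ for $s$ associated in $S'$, and $\sigma$ avoids all of them — otherwise it would vanish on $\overline{\{s\}}\times X$, hence on $s'\times X$ for a closed point $s'\in\overline{\{s\}}$, contradicting $s'\in S'\subseteq S\setminus S^{\mathrm{bad}}$. Thus $\sigma$ is a non-zero-divisor, $\mathcal{O}(-n)\xrightarrow{\sigma}\mathcal{O}_{S'\times X}$ is an effective Cartier divisor, and $U':=(S'\times X)\setminus\{\sigma=0\}$ is a divisor complement that is fibrewise dense, i.e. $(S',U')\in\dom$, with $U'\subseteq V_{k(u)}$.

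Finally I would assemble the cover. Ranging over $u\in U$, the opens $(S\times X)\setminus\{\sigma^{u}=0\}$ cover $U$, so by quasi-compactness finitely many $u_1,\dots,u_m$ suffice; covering each $S\setminus S^{\mathrm{bad}}_{u_j}$ by finitely many affines $S_{j,a}$ yields a finite family of morphisms $\{(S_{j,a},U_{j,a})\to(S,U)\}$ in $\dom$, each inducing an open immersion on the level of the $U$-components (which is what a Zariski cover in $\dom$ means). They jointly cover $U$: given $v\in U$ over $s$, pick $j$ with $v\in(S\times X)\setminus\{\sigma^{u_j}=0\}$; then $\sigma^{u_j}(v)\neq0$ forces $s\notin S^{\mathrm{bad}}_{u_j}$, so $v$ lands in some $U_{j,a}$. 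Each $U_{j,a}$ is a divisor complement in $S_{j,a}\times X$ with $\mathcal{L}_U|_{U_{j,a}}$ trivial, which is precisely the asserted cover.
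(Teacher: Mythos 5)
Your argument is correct. At the level of strategy it mirrors the paper's proof --- cover $U$ by finitely many trivializing divisor complements, then shrink the base so that each piece becomes fiberwise dense --- but the paper simply cites the standard fact that divisor complements generate the Zariski topology of the quasi-projective scheme $S\times X$ and takes $S_{i}$ to be the (open) image of $U_{i}$ in $S$, whereas you reprove that fact from scratch via relative Serre global generation of $\mathcal{I}_{C}(n)$ and, crucially, verify that the resulting section $\sigma$ is a non-zero-divisor by checking it misses every associated point of $S'\times X$ (using flatness of $S'\times X\rightarrow S'$ and geometric integrality of $X$). This last point is a genuine subtlety that the citation glosses over when $S$ is non-reduced, and your removal of the bad locus $S^{\t{bad}}$ recovers exactly the image of the divisor complement in $S$, so the two base restrictions agree; your further refinement of $S\setminus S^{\t{bad}}$ into affine pieces also patches the small mismatch in the paper, whose $S_{i}$ is an open subscheme of $S$ that need not itself be affine even though objects of $\dom$ are required to lie over $\aff$. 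The cost of your route is length; what it buys is a self-contained and slightly more careful proof.
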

\begin{proof}
Since $S\times X$ is quasi-projective, the topology of its underlying
topological space is generated by divisor complements Thus, we may
cover $U$ by a finite collection of open subschemes, $\left\{ U_{i}\right\} _{i\in I}$,
which trivialize $\mathcal{L}_{U}$, and such that each $U_{i}\subseteq S\times X$
is a divisor complement. Let $S_{i}\subseteq S$ be the open subscheme
which is the image of $U_{i}\subseteq S\times X\rightarrow S$. Note
that $U_{i}$ might not be a family of domains over $S$, but that
it is over $S_{i}$, and that $\left\{ \left(S_{i},U_{i}\right)\rightarrow\left(S,U\right)\right\} _{i\in I}$
is a Zariski cover in $\dom$.\end{proof}
\begin{lem}
\label{lem:density and flatness}Let $\mathcal{V}$ be a vector bundle
over $S\times X$, let $\xymatrix{\mathcal{L}\ar@{^{(}->}[r]^{\kappa} & \mathcal{V}}
$ be an invertible subsheaf, and let $U\subseteq S\times X$ be the
open subscheme where $\kappa$ is a sub-bundle embedding. Then, the
following two conditions are equivalent:
\begin{enumerate}
\item The coherent sheaf $\mathcal{V}/\mathcal{L}$ is $S$-flat.
\item The open subscheme $U\subseteq S\times X$ is universally dense relative
to $S$. I.e., The data $\left(S,U\right)$ defines a point of $\dom$.
\end{enumerate}
\end{lem}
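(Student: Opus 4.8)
The plan is to reduce both conditions to a single fibrewise statement over the closed points of $S$ --- namely the injectivity of the restriction of $\kappa$ to each fibre --- and then play the $\mathrm{Tor}$-interpretation of flatness against the one-dimensional geometry of the fibres. Write $Q:=\mathcal{V}/\mathcal{L}=\t{coker}(\kappa)$, so that
\[
0\to\mathcal{L}\xrightarrow{\kappa}\mathcal{V}\to Q\to0
\]
is a two-term locally free resolution of $Q$ over $\mathcal{O}_{S\times X}$. For a closed point $s\in S$ let $i_{s}\colon X_{s}=\{s\}\times X\hookrightarrow S\times X$ denote the inclusion of the fibre, and write $\kappa_{s}:=i_{s}^{*}\kappa$ for the restricted map of bundles on $X_{s}$. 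Since $S\times X\to S$ is flat and $\mathcal{L},\mathcal{V}$ are locally free, they are $S$-flat, so applying $Li_{s}^{*}$ to the sequence above leaves $\mathcal{L},\mathcal{V}$ unchanged and identifies
\[
\mathrm{Tor}_{1}^{\mathcal{O}_{S}}\!\left(Q,k(s)\right)\cong\ker\!\left(\kappa_{s}\colon\mathcal{L}\big|_{X_{s}}\to\mathcal{V}\big|_{X_{s}}\right),
\]
with all higher $\mathrm{Tor}$ vanishing.

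First I would treat condition (1). By the local criterion for flatness, together with the openness of the flat locus of the finite-type morphism $S\times X\to S$ (so that flatness may be tested at closed points), $Q$ is $S$-flat if and only if $\mathrm{Tor}_{1}^{\mathcal{O}_{S}}(Q,k(s))=0$ for every closed point $s$. By the displayed isomorphism this is exactly the condition that $\kappa_{s}$ be injective as a map of coherent sheaves on $X_{s}$, for all closed $s$.

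It remains to match this up with condition (2). Here I would use two facts about the fibre $X_{s}$, which --- as $k$ is algebraically closed and $s$ is closed --- is just $X$, and in particular an integral curve. First, the locus where $\kappa$ is a sub-bundle embedding is detected fibrewise: a map of vector bundles is a sub-bundle embedding at a point precisely when its fibre at that point is injective, and this fibre is computed intrinsically at the point, independently of whether we view it inside $S\times X$ or inside $X_{s}$; hence $U\cap X_{s}$ coincides with the sub-bundle locus $W_{s}\subseteq X_{s}$ of $\kappa_{s}$. Secondly, since $\mathcal{L}\big|_{X_{s}}$ is a line bundle on the integral curve $X_{s}$, the map $\kappa_{s}$ is injective iff it is nonzero at the generic point iff $W_{s}$ contains the generic point iff $W_{s}$ is non-empty (every non-empty open of $X_{s}$ being dense). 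Chaining these, $\kappa_{s}$ is injective $\iff U_{s}=U\cap X_{s}$ is non-empty, and by the definition of $\dom$ the latter, holding for all closed $s$, is exactly universal density of $U$. Combining with the previous paragraph gives $(1)\Leftrightarrow(2)$.

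The crux of the argument --- and the step I would be most careful about --- is the identification $\mathrm{Tor}_{1}^{\mathcal{O}_{S}}(Q,k(s))\cong\ker(\kappa_{s})$: the point is that restricting the subsheaf inclusion $\kappa$ to a fibre can fail to be injective precisely when $Q$ is not $S$-flat along that fibre, and the resulting kernel is supported exactly on the bad locus. Everything else is either the standard local flatness criterion or the elementary geometry of maps out of a line bundle on an integral curve.
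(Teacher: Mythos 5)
Your proof is correct and follows essentially the same route as the paper's: both reduce each condition to closed points of $S$ and then identify the vanishing of $\mathrm{Tor}_{1}$ of $\mathcal{V}/\mathcal{L}$ at $s$ with the injectivity of $\kappa\big|_{\{s\}\times X}$, which (for a map out of a line bundle on the integral curve $X$) is equivalent to being nonzero, i.e.\ to $U\times_{S}\{s\}\neq\emptyset$. Your write-up just makes explicit the two-term locally free resolution and the identification $U\cap X_{s}=W_{s}$ that the paper leaves implicit.
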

In particular, for an effective Cartier divisor, $\mathcal{L}\hookrightarrow\mathcal{O}_{S\times X}$,
the open subscheme $U_{\mathcal{L}}\subseteq S\times X$ determines
an $S$-point of $\dom$ iff the coherent sheaf $\mathcal{O}_{S\times X}/\mathcal{L}$
is $S$-flat. 
\begin{proof}
Let $p$ and $j$ denote the maps $U\xrightarrow{j}S\times X\xrightarrow{p}S$.
Both conditions may be tested on closed points of $S$. I.e., it suffices
to show that for every maximal sheaf of ideals $\mathcal{I}_{s}\subseteq\mathcal{O}_{S}$,
corresponding to a closed point $s\in S$, we have 
\[
\t{Tor}_{S}^{1}\left(\mathcal{V}/\mathcal{L},\mathcal{I}_{s}\right)=0\,\,\,\t{iff}\,\,\, U\times_{S}\left\{ s\right\} \neq\emptyset
\]
Indeed, $\t{Tor}_{S}^{1}\left(\mathcal{V}/\mathcal{L},\mathcal{I}_{s}\right)$
vanishes iff $\mathcal{L}\Big|_{\left\{ s\right\} \times X}\xrightarrow{\kappa\big|_{\left\{ s\right\} \times X}}\mathcal{V}\Big|_{\left\{ s\right\} \times X}$
is injective iff $\kappa\big|_{\left\{ s\right\} \times X}\neq0$
iff $U\times_{S}\left\{ s\right\} \neq\emptyset$.
\end{proof}
\noindent The following lemma contains the geometric input for the
proof of proposition \ref{prop:quasimap equiv}:
\begin{lem}
\label{lem:lbextension} Assume given:
\begin{itemize}
\item $\left(S,U\right)\in\dom$.
\item $\mathcal{V}$ a rank $m$ vector bundle over $S\times X$.
\item $\mathcal{L}_{U}$ a line bundle over $U$ together with a sub-bundle
embedding 
\[
\mathcal{L}_{U}\xrightarrow{\kappa_{U}}\mathcal{V}\big|_{U}
\]

\end{itemize}
Then, there exist
\begin{itemize}
\item A Zariski cover $\left(\tilde{S},\tilde{U}\right)\xrightarrow{p}\Big(S,U\Big)$
in $\dom$.
\item A line bundle $\mathcal{L}$ on $\tilde{S}\times X$ together with
a sub-sheaf embedding $\xymatrix{\mathcal{L}\ar@{^{(}->}[r]\sp(0.4){\kappa} & \mathcal{V}\big|_{\tilde{S}\times X}}
$ whose co-kernel is $\tilde{S}$-flat.
\item An identification $\mathcal{L}\big|_{\tilde{U}}\cong\mathcal{L}_{U}\big|_{\tilde{U}}$
which exhibits $\kappa$ as an extension of 
\[
\mathcal{L}_{U}\big|_{\tilde{U}}\xrightarrow{\kappa_{U}\big|_{\tilde{U}}}\mathcal{V}\big|_{\tilde{U}}
\]

\end{itemize}
Above we have used $\left(-\right)\big|_{\tilde{U}}$ to denote pullback
along $\tilde{U}\rightarrow U$.\end{lem}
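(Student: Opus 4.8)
The plan is to first use Lemma \ref{lem:trivializing cover} to reduce to a purely local situation, then to extend the given section by allowing poles along a divisor, and finally to verify injectivity of the extension and flatness of its cokernel. Since the conclusion only requires a construction after passing to a Zariski cover, and a finite disjoint union of affines is again affine, it suffices to produce the data over each member of a cover and then take the disjoint union. Applying Lemma \ref{lem:trivializing cover} to the line bundle $\mathcal{L}_U$ on $U$, I may therefore assume that $U = (S\times X)\setminus D$ is the complement of an effective Cartier divisor $D$ with invertible ideal sheaf $\mathcal{I}\hookrightarrow\mathcal{O}_{S\times X}$, and that $\mathcal{L}_U\cong\mathcal{O}_U$ is trivial. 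Under this trivialization the sub-bundle embedding $\kappa_U$ becomes a nowhere-vanishing section $s_U\in\Gamma(U,\mathcal{V}|_U)$; note that $\mathcal{I}|_U\cong\mathcal{O}_U$, so any extension of $s_U$ will automatically restrict back to $\kappa_U$.

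Next I would extend $s_U$ across $D$. Because $S\times X$ is Noetherian and $\mathcal{V}$ is coherent, the pushforward along the open immersion $j\colon U\hookrightarrow S\times X$ is the filtered colimit $j_*(\mathcal{V}|_U)\cong\varinjlim_n \mathcal{V}\otimes\mathcal{I}^{\otimes -n}$ of sheaves of sections with poles of bounded order along $D$; taking global sections (which commutes with this filtered colimit) shows that $s_U$ is the image of some $\tilde s\in\Gamma(S\times X,\mathcal{V}\otimes\mathcal{I}^{\otimes -n})$ for $n\gg 0$. Equivalently, setting $\mathcal{L}:=\mathcal{I}^{\otimes n}$, the section $\tilde s$ is a map $\kappa\colon\mathcal{L}\to\mathcal{V}$ whose restriction to $U$ recovers $\kappa_U$.

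It then remains to check that $\kappa$ is a sub-sheaf embedding with $S$-flat cokernel. For the flatness I would invoke Lemma \ref{lem:density and flatness}: once $\kappa$ is known to be injective, its sub-bundle locus contains $U$, which is universally dense, and is therefore itself universally dense, so $\mathcal{V}/\mathcal{L}$ is $S$-flat. The crux is thus the injectivity of $\kappa$, which I expect to be the main obstacle. The idea is that $\ker\kappa\subseteq\mathcal{L}$ vanishes on $U$, so it suffices to know that $U$ contains every associated point of $S\times X$: since $\mathcal{L}$ is locally free one has $\mathrm{Ass}(\ker\kappa)\subseteq\mathrm{Ass}(\mathcal{L})=\mathrm{Ass}(\mathcal{O}_{S\times X})\subseteq U$, forcing $\ker\kappa=0$. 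To obtain the containment of associated points I would use that $S\times X\to S$ is flat, so that the associated points of $S\times X$ are exactly the points which are associated in their fiber over an associated point of $S$; since $X$ is smooth each fiber $\{s\}\times X$ is reduced and its associated points are the generic points of its components, all of which lie in $U_s$ because universal density of $U$ (applied to $\mathrm{Spec}\,k(s)\to S$) makes $U_s$ dense.

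Assembling the data constructed over each member of the Zariski cover from Lemma \ref{lem:trivializing cover} then yields the required cover $(\tilde S,\tilde U)\to(S,U)$ together with $\mathcal{L}$ and the sub-sheaf embedding $\kappa$ extending $\kappa_U$. The genuinely geometric content sits entirely in the last paragraph, namely in translating the \emph{universal} density of $U$ (fiberwise density) into schematic density of $U$ in the flat family $S\times X\to S$; the extension-with-poles step is formal once $D$ is an honest Cartier divisor, which is precisely what Lemma \ref{lem:trivializing cover} arranges.
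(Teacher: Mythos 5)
Your proposal is correct and follows essentially the same route as the paper: reduce via Lemma \ref{lem:trivializing cover} to the case of a trivialized $\mathcal{L}_U$ on a divisor complement, extend the section across the divisor by twisting by powers of its ideal sheaf (the paper cites \cite[II.5.14]{Hart} for exactly this step), and deduce flatness of the cokernel from Lemma \ref{lem:density and flatness}. Your last paragraph verifying injectivity of $\kappa$ via associated points is a detail the paper leaves implicit, and it is a correct and welcome addition.
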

\begin{proof}
According to lemma \ref{lem:trivializing cover}, we may find a Zariski
cover in $\dom$, $\left(\tilde{S},\tilde{U}\right)\rightarrow\left(S,U\right)$,
such that 
\begin{itemize}
\item $\mathcal{L}_{U}\Big|_{\tilde{U}}$ is a trivial line bundle.
\item The open subscheme $\tilde{U}\subseteq\tilde{S}\times X$ is a divisor
complement associated to a Cartier divisor $\mathcal{N}\hookrightarrow\mathcal{O}_{\tilde{S}\times X}$.
\end{itemize}
We proceed to show that the sub-bundle embedding 
\[
\left(*\right)\,\,\,\,\,\,\,\,\,\mathcal{L}_{U}\big|_{\tilde{U}}\xrightarrow{\kappa_{U}\big|_{\tilde{U}}}\mathcal{V}\big|_{\tilde{U}}
\]
 admits a degenerate extension across $\tilde{S}\times X$. We point
out that the line bundle $\mathcal{N}$ is trivialized over $\tilde{U}$,
and we fix identifications $\mathcal{N}\big|_{\tilde{U}}=\mathcal{O}_{\tilde{U}}\cong\mathcal{L}_{U}\big|_{\tilde{U}}$.
By a standard lemma in algebraic geometry (\cite[II.5.14]{Hart}),
there exists an integer $l$ and a map of coherent sheaves $\mathcal{N}^{\otimes l}\xrightarrow{\kappa}\mathcal{V}\big|_{\tilde{S}\times X}$
whose restriction to $\tilde{U}$ may be identified with $\left(*\right)$.
$ $By lemma \ref{lem:density and flatness}, $\t{coker}\left(\kappa\right)$
is $\tilde{S}$-flat.
\end{proof}
{}\medskip

\subsubsection{Proof of proposition \ref{prop:quasimap equiv}\label{proof:prop:quasimap equiv}}

The following square is Cartesian

\[
\xymatrix{\qmap Y\ar[r]\ar[d]\ar@{}[dr]\sb(0.15){\ulcorner} & \qmap{\mathbb{P}^{n}}\ar[d]\\
\qmap Y\ar[r] & \gmap{\mathbb{P}^{n}}
}
\]
hence it suffices to prove the proposition for $Y=\mathbb{P}^{n}$.

It suffices to fix an $S$-point, $S\rightarrow\gmap{\mathbb{P}^{n}}$,
and show that there exists a Zariski cover $\tilde{S}\rightarrow S$,
and a lift as indicated by the dotted arrow below 
\[
\xymatrix{ &  & \qmap{\mathbb{P}^{n}}\ar[d]\\
\tilde{S}\ar[r]\ar@{-->}[urr] & S\ar[r]\sp(0.3){\phi} & \gmap{\mathbb{P}^{n}}
}
\]
Let $\phi$ be presented by the data of a point $\left(S,U\right)\in\dom$,
and a sub-bundle embedding $\mathcal{L}_{U}\xrightarrow[\subseteq]{\kappa_{U}}\mathcal{O}_{U}^{n+1}$
over $U$. $ $ Lemma \ref{lem:lbextension} guarantees the existence
of a cover $\tilde{S}\rightarrow S$, and an invertible sub-sheaf
$\xymatrix{\mathcal{L}\ar@{^{(}->}[r]^{\kappa} & \mathcal{O}_{\tilde{S}\times X}^{n+1}}
$, which is an extension of $\kappa_{U}\Big|_{\tilde{U}}$ to all of
$\tilde{S}\times X$. The data associated with $\kappa$ presents
a map, $\tilde{S}\rightarrow\qmap{\mathbb{P}^{n}}$, which is the
sought after lift.

\subsection{Quasi sections\label{sub:Quasi-sections}}

In the next section we shall need a relative and twisted generalization
of the notion of quasi map, which we now define. All the results in
this section, proven above, could have been stated and proven in this
more general setup (at the cost of encumbering the presentation).

Fix $S\in\aff$, and let $\mathcal{V}$ be a vector bundle on $S\times X$.
Denote the relative projectivization by 
\[
\mathbb{P}\left(\mathcal{V}\right):=\t{proj}_{S\times X}\left(\t{sym}_{\mathcal{O}_{S\times X}}\mathcal{V}^{*}\right)
\]
it is a locally projective scheme over $S\times X$. We define the
space of quasi-sections of $\mathbb{P}\left(\mathcal{V}\right)\rightarrow S\times X$,
relative to $S$:
\begin{defn}
~
\begin{enumerate}
\item The functor 
\[
\qsect[][S]{S\times X}{\mathbb{P}\left(\mathcal{V}\right)}:\aff_{/S}^{\t{op}}\rightarrow\set
\]
is defined to be the functor of points over $S$, whose $T$-points
are presented by the data $\left(\mathcal{L},\mathcal{L}\hookrightarrow\mathcal{V}\big|_{T\times X}\right)$,
where $\mathcal{L}$ is a line bundle over $T\times X$, and $\mathcal{L}\hookrightarrow\mathcal{V}\big|_{T\times X}$
is an injection of quasi-coherent sheaves, whose co-kernel is $T$-flat.
\item For a closed embedding $Y\hookrightarrow\mathbb{P}\left(\mathcal{V}\right)$,
defined by a graded sheaf of ideals $\mathcal{I}_{Y}\subseteq\t{Sym}_{T\times X}\mathcal{V}^{\vee}$,
we define 
\[
\qsect[][S]{S\times X}Y\subseteq\qsect[][S]{S\times X}{\mathbb{P}\left(\mathcal{V}\right)}
\]
to be the subfunctor of consisting of those points presented by the
data $\left(\mathcal{L},\mathcal{L}\hookrightarrow\mathcal{V}\big|_{T\times X}\right)$
such that the composition 
\[
\t{Sym}_{T\times X}\mathcal{L}^{\vee}\xleftarrow{}\t{Sym}_{T\times X}\mathcal{V}^{\vee}\xleftarrow{}\mathcal{I}_{Y}
\]
vanishes. 
\end{enumerate}
\end{defn}
\noindent When $S=\spec\left(k\right)$, and $\mathcal{V}=\mathcal{O}_{S\times X}^{n+1}$
this definition reduces to $\qmap Y$.

As for the absolute version, there exists a map 
\[
\qsect[][S]{S\times X}Y\rightarrow\gsect[][S]{S\times X}Y
\]
and the counterpart of proposition \ref{prop:quasimap equiv} holds.
The proof is virtually identical (after adjusting notation), and is
omitted.
\begin{prop}
\label{prop:quasisect equiv}The map $\qsect[][S]{S\times X}Y\rightarrow\gsect[][S]{S\times X}Y$
induces a map of presheaves 
\[
\qsect[][S]{S\times X}Y/\mathcal{E}_{Y}\xrightarrow{}\gsect[][S]{S\times X}Y
\]
which becomes an equivalence after Zariski sheafification.\qed
\end{prop}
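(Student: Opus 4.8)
The plan is to transcribe the proof of Proposition~\ref{prop:quasimap equiv} (carried out in~\ref{proof:prop:quasimap equiv}) with $\mathcal{O}_{S\times X}^{n+1}$ replaced by $\mathcal{V}$ and every base affine scheme taken over $S$. First I would record that, exactly as in the absolute case, the relation $\mathcal{E}_{Y}$ is by construction the kernel pair of $\qsect[][S]{S\times X}Y\to\gsect[][S]{S\times X}Y$, so the induced map
\[
\qsect[][S]{S\times X}Y/\mathcal{E}_{Y}\longrightarrow\gsect[][S]{S\times X}Y
\]
is a pointwise monomorphism of ($\set$-valued) presheaves, and remains one after the left-exact operation of sheafification. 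Hence it suffices to show this map is an epimorphism in the Zariski topology, i.e. that every $T$-point of $\gsect[][S]{S\times X}Y$ (for $T\to S$) lifts to a quasi-section after passing to a Zariski cover of $T$.

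Second, I would reduce to $Y=\mathbb{P}\left(\mathcal{V}\right)$ via the Cartesian square
\[
\xymatrix{\qsect[][S]{S\times X}Y\ar[r]\ar[d]\ar@{}[dr]|(0.2)\ulcorner & \qsect[][S]{S\times X}{\mathbb{P}\left(\mathcal{V}\right)}\ar[d]\\
\gsect[][S]{S\times X}Y\ar[r] & \gsect[][S]{S\times X}{\mathbb{P}\left(\mathcal{V}\right)}}
\]
which exhibits $\qsect[][S]{S\times X}Y$ as the base change of $\qsect[][S]{S\times X}{\mathbb{P}\left(\mathcal{V}\right)}$ along the subfunctor inclusion $\gsect[][S]{S\times X}Y\hookrightarrow\gsect[][S]{S\times X}{\mathbb{P}\left(\mathcal{V}\right)}$; a Zariski-local lifting statement for $\mathbb{P}\left(\mathcal{V}\right)$ then pulls back to one for $Y$.

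Third, for $Y=\mathbb{P}\left(\mathcal{V}\right)$ I would fix a $T$-point of $\gsect[][S]{S\times X}{\mathbb{P}\left(\mathcal{V}\right)}$, presented by a family of domains $\left(T,U_{T}\right)\in\dom$ together with a sub-bundle embedding $\mathcal{L}_{U_{T}}\hookrightarrow\mathcal{V}\big|_{U_{T}}$, and apply Lemma~\ref{lem:lbextension} with base scheme $T$ and vector bundle $\mathcal{V}\big|_{T\times X}$ in place of $\mathcal{O}_{S\times X}^{n+1}$. This is legitimate precisely because that lemma is stated for an arbitrary vector bundle on a product with $X$: it supplies a Zariski cover $\tilde{T}\to T$ and a line bundle $\mathcal{L}$ on $\tilde{T}\times X$ with a sub-sheaf embedding $\mathcal{L}\hookrightarrow\mathcal{V}\big|_{\tilde{T}\times X}$ whose cokernel is $\tilde{T}$-flat and which extends $\kappa_{U_{T}}\big|_{\tilde{U}}$. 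By definition this datum is exactly a lift $\tilde{T}\to\qsect[][S]{S\times X}{\mathbb{P}\left(\mathcal{V}\right)}$, completing the argument.

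The main obstacle is only bookkeeping, and I would expect it to lie entirely in verifying that the supporting statements survive relativization over $S$ and the replacement of the trivial bundle by $\mathcal{V}$: namely the analog of Lemma~\ref{lem:equiv rel} (that $\mathcal{E}_{Y}$ is a proper schematic equivalence relation, needed for the quotient to behave) and of Lemma~\ref{lem:density and flatness} (that $\tilde{T}$-flatness of the cokernel is equivalent to $\big(\tilde{T},\tilde{U}\big)$ defining a relative family of domains). Since Lemmas~\ref{lem:density and flatness} and~\ref{lem:lbextension} are \emph{already} formulated for a general vector bundle, their geometric content transfers verbatim, and the remaining adjustments are purely notational --- which is why the proof may be safely omitted.
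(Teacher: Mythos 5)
Your proposal is correct and matches the paper's intent exactly: the paper omits this proof with the remark that it is ``virtually identical (after adjusting notation)'' to that of Proposition \ref{prop:quasimap equiv}, and your transcription --- reduction to $\mathbb{P}\left(\mathcal{V}\right)$ via the Cartesian square, then Zariski-local lifting via Lemma \ref{lem:lbextension}, whose statement already allows an arbitrary vector bundle --- is precisely that adjustment. You also correctly identify that the only points requiring verification are the relativized analogs of Lemmas \ref{lem:equiv rel} and \ref{lem:density and flatness}, which transfer verbatim.
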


\section{\label{sec:Bun_B_bar}D- modules ``on'' $B\left(K\right)\backslash G\left(\mathbb{A}\right)/G\left(\mathbb{O}\right)$}

Recall example \ref{exam:bungbk}, in which we introduced a moduli
problem of generic data $\bun[B\left(\dom\right)]\in\pshv{\dom}$,
and denoted its associated functor of points by 
\[
\bun[B\left(\t{gen}\right)]:=\lke q\left(\bun[B\left(\dom\right)]\right)\in\pshv{\aff}
\]
It is a geometrization%
\footnote{``The geometrization'' according to the premise of this paper.%
} of $B\left(K\right)\backslash G\left(\mathbb{A}\right)/G\left(\mathbb{O}\right)$.
Conceptual appeal notwithstanding, this presentation of $\bun[B\left(\t{gen}\right)]$
is too unwieldy to be of much value. Namely, the issue is that using
it (directly) to obtain presentations of invariants such as $\dmod$
is a non-starter.

In the note \cite[subsection 1.1]{DG-Whit}, Gaitsgory introduces
a category denoted $\dmod\left(\t{Bun}_{B}^{\t{rat}}\right)$, which
is cast to play the role of the category of D-modules ``on'' $B\left(K\right)\backslash G\left(\mathbb{A}\right)/G\left(\mathbb{O}\right)$.
In this section we present the construction of Gatisgory's category,
and show that it is equivalent to $\dmod\left(\bun[B\left(\t{gen}\right)]\right)$.
The discussion parallels that of the previous section.
\begin{notation}
Let $G$ be a connected reductive affine algebraic group. Choose a
Borel subgroup $B$, denote by $N$ the unipotent radical of $B$,
and by $H=B/N$ the canonical Cartan. Choose a root system for $G$
and $B$, and denote by $\Lambda_{G}^{+}$ the semi-group of dominant
integral weights. For a dominant integral weight $\lambda$, let $V^{\lambda}$
denote the irreducible representation of $G$ with highest weight
$\lambda$. For a $H$-torsor, $\mathcal{P}_{H}$, we denote by $\lambda\left(\mathcal{P}_{H}\right)$
the $\mathbb{G}_{m}$-torsor $\mathcal{P}_{H}\times_{\lambda}\mathbb{G}_{m}$
(as well as the associated line bundle - a quasi-coherent sheaf).
For a $G$-torsor, $\mathcal{P}_{G}$, we denote by $\mathcal{V}_{\mathcal{P}_{G}}^{\lambda}$
the vector bundle corresponding to $V^{\lambda}$.
\end{notation}

\subsection{Constructions}

\subsubsection{Plucker data }

Given a scheme $Y$ and a $G$-bundle $\mathcal{P}_{G}$ on $Y$,
a convenient way of presenting the data of a reduction of the structure
group of $\mathcal{P}_{G}$ to $B$ is given by specifying an $H$-bundle,
$\mathcal{P}_{H}$, together with bundle maps for every $\lambda\in\Lambda_{G}^{+}$
\[
\lambda\left(\mathcal{P}_{H}\right)\xrightarrow[\subseteq]{\kappa_{\lambda}}\mathcal{V}_{\mathcal{P}_{G}}^{\lambda}
\]
 which satisfy the \emph{Plucker relations. }I.e., for $\lambda_{0}$
the trivial character, $\kappa^{0}$ is the identity map 
\[
\mathcal{O}\cong\lambda_{0}\left(\mathcal{P}_{H}\right)\rightarrow\mathcal{V}_{\mathcal{P}_{G}}^{0}\cong\mathcal{O}
\]
and for every pair of dominant integral weights the following diagram
commutes\emph{ 
\[
\xymatrix{\left(\lambda+\mu\right)\left(\mathcal{P}_{H}\right)\ar[r]\sp(0.55){\kappa_{\lambda+\mu}}\ar[d] & \mathcal{V}_{\mathcal{P}_{G}}^{\lambda+\mu}\ar[d]\\
\lambda\left(\mathcal{P}_{H}\right)\otimes\mu\left(\mathcal{P}_{H}\right)\ar[r]\sp(0.55){\kappa_{\lambda}\otimes\kappa_{\mu}} & \mathcal{V}_{\mathcal{P}_{G}}^{\mu}\otimes\mathcal{V}_{\mathcal{P}_{G}}^{\lambda}
}
\]
}From now on, we adopt this Plucker point of view for presenting points
of $\bun[B\left(\t{gen}\right)]$.

\subsubsection{Degenerate reduction spaces}

Degenerating the data of a reduction of a $G$-torsor to $B$, in
a similar fashion to the degeneration of a regular map to a quasi-map,
we obtain Drinfeld's (relative) compactification of $\t{Bun}_{B}\rightarrow\bun$:

\medskip

Let $\bunbbar{}\in\pshv{\aff}$%
\footnote{Often denoted by some variation on $\overline{\t{Bun}_{B}}$.%
}, be the presheaf which sends a scheme $S$ to the groupoid which
classifies the data 
\[
\left(\mathcal{P}_{G},\mathcal{P}_{H},\lambda\left(\mathcal{P}_{H}\right)\xrightarrow{\kappa_{\lambda}}\mathcal{V}_{\mathcal{P}_{G}}^{\lambda}:\lambda\in\Lambda_{G}^{+}\right)
\]
where:
\begin{itemize}
\item $\mathcal{P}_{G}$ is a $G$-torsor on $S\times X$ .
\item $\mathcal{P}_{H}$ is an $H$-torsor on $S\times X$ .
\item For every $\lambda\in\Lambda_{G}^{+}$, $\kappa_{\lambda}$ is an
injection of coherent sheaves whose co-kernel is $S$-flat. The collection
of $\kappa_{\lambda}$'s is required to satisfy the Plucker relations. 
\end{itemize}
\medskip

Informally, this is a moduli space of $G$-bundles on $X$, with a
degenerate reduction to $B$. There is an evident map $\t{Bun}_{B}\rightarrow\overline{\bun[B]}$
whose image consists of those points for which the $\kappa_{\lambda}$'s
are sub-bundle embeddings. For more details on $\bunbbar{}$ see \cite{FM}
or \cite{BG}. 

\medskip

Let Let $\left\{ \lambda_{j}\right\} _{j\in\mathcal{J}}\subseteq\Lambda_{G}^{+}$
be a finite subset which generates $\Lambda_{G}^{+}$ over $\mathbb{Z}_{\geq0}$.
The natural map%
\footnote{Which maps $1\in G$ to the highest weight line in each component.%
} 
\[
G/B\rightarrow\underset{j\in\mathcal{J}}{\times}\mathbb{P}\left(V^{\lambda_{j}}\right)\hookrightarrow\mathbb{P}\left(\otimes_{j\in\mathcal{J}}V^{\lambda_{j}}\right)
\]
 is a closed embedding. For every $j\in\mathcal{J}$ let $\mathcal{V}^{\lambda_{j}}$
be the vector bundle on $\bun\times X$ corresponding to the representation
$V^{\lambda_{j}}$, and let $\mathcal{V}:=\otimes_{j\in\mathcal{J}}\mathcal{V}^{\lambda_{j}}$. 
\begin{lem}
\cite[prop. 1.2.2]{BG}\label{lem:bunbbar representable} Let $S\rightarrow\bun$
classify a $G$-bundle $\mathcal{P}_{G}$ on $S\times X$, and denote
$\left(\bunbbar{}\right)_{S}:=S\times_{\bun}\bunbbar{}$. 

There exists a natural isomorphism 
\[
\left(\bunbbar{}\right)_{S}\xrightarrow{\cong}\qsect[][S]{S\times X}{\mathcal{P}_{G}/B}
\]
where the space of quasi-sections is defined via the closed embedding
\[
\mathcal{P}_{G}/B\hookrightarrow\mathbb{P}\left(\mathcal{V}\Big|_{S\times X}\right)
\]
In particular $\bunbbar{}$ is schematic and proper over $\bun$.
\qed\end{lem}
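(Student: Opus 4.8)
The plan is to produce mutually inverse maps between $(\bunbbar{})_S$ and $\qsect[][S]{S\times X}{\mathcal{P}_G/B}$, natural in $S$, and then to extract representability and properness from the representability of the ambient space of quasi-sections into $\mathbb{P}(\mathcal{V})$. Throughout, the two crucial inputs are the Plücker presentation of $G/B$ recalled above and the density/flatness dictionary of Lemma \ref{lem:density and flatness}.

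First I would construct the forward map. Given Plücker data $(\mathcal{P}_H,\{\kappa_\lambda\})$ over the fixed $\mathcal{P}_G$, I form the line bundle $\mathcal{L}:=\otimes_{j\in\mathcal{J}}\lambda_j(\mathcal{P}_H)$ together with the composite $\otimes_{j}\kappa_{\lambda_j}\colon \mathcal{L}\to\otimes_j\mathcal{V}^{\lambda_j}=\mathcal{V}\big|_{S\times X}$; this is the Segre counterpart of the embedding $G/B\hookrightarrow\mathbb{P}(\otimes_j V^{\lambda_j})$. I would check this map $\kappa$ is an invertible subsheaf with $S$-flat cokernel: by Lemma \ref{lem:density and flatness} each $\kappa_{\lambda_j}$ is a subbundle over a universally $S$-dense open $U_j\subseteq S\times X$, the finite intersection $U:=\cap_j U_j$ is again a family of domains (intersections of fibrewise-dense opens of the integral curve $X$ stay dense), and over $U$ the tensor map is a subbundle; restricting to a fibre $\{s\}\times X\cong X$ the map is nonzero, hence injective since $X$ is integral, so the fibrewise $\t{Tor}$ criterion of Lemma \ref{lem:density and flatness} gives that $\kappa$ is injective with $S$-flat cokernel. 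Finally the resulting point lands in $\mathcal{P}_G/B$: the ideal-vanishing condition holds over the dense $U$, where the subbundle Plücker data is a genuine $B$-reduction and hence a section of $\mathcal{P}_G/B$, and the relevant map $\mathcal{I}_{\mathcal{P}_G/B}\to\t{Sym}_{S\times X}\mathcal{L}^\vee$ has $S$-flat target whose fibres are torsion-free on $X$, so vanishing over $U$ propagates to all of $S\times X$.

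Second I would construct the inverse. Given a quasi-section, restrict to the universally dense open $U$ where $\mathcal{L}$ is a subbundle; there it defines an honest section of $\mathcal{P}_G/B\to U$, equivalently a genuine $B$-reduction, yielding an $H$-torsor $\mathcal{P}_H^U$ and maps $\kappa_\lambda^U$. The task is then to extend $\mathcal{P}_H$ and all the $\kappa_\lambda$ across $S\times X$ with $S$-flat cokernels, compatibly with the Plücker relations; concretely one must split off, from the single degenerate datum $\mathcal{L}\hookrightarrow\mathcal{V}$, the individual invertible subsheaves $\lambda_j(\mathcal{P}_H)\hookrightarrow\mathcal{V}^{\lambda_j}$ dictated by the Segre/Plücker equations. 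The existence of such degenerate extensions (after a Zariski cover) is exactly Lemma \ref{lem:lbextension}, and flatness of cokernels is again controlled by Lemma \ref{lem:density and flatness}. I expect this reconstruction — recovering the full collection $\{\lambda(\mathcal{P}_H),\kappa_\lambda\}$ from $\mathcal{L}$, checking canonicity (independence of the auxiliary cover via the saturation being fibrewise unique on the integral $X$), and verifying the two assignments are mutually inverse — to be the main obstacle; it is the geometric heart of \cite[prop. 1.2.2]{BG} and is where the Plücker presentation does its essential work.

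Finally, representability and properness would follow formally. The relative space $\qsect[][S]{S\times X}{\mathbb{P}(\mathcal{V})}$ is schematic over $S$ and a disjoint union of $S$-proper pieces, by the relative version over $S$ of the representability statement proved for $\qmap{\mathbb{P}^n}$ (the setup preceding Proposition \ref{prop:quasisect equiv}). Since $G/B$ is projective, $\mathcal{P}_G/B\hookrightarrow\mathbb{P}(\mathcal{V}\big|_{S\times X})$ is a closed embedding, so $\qsect[][S]{S\times X}{\mathcal{P}_G/B}$ is a closed subscheme of it and hence schematic and proper over $S$. Transporting along the isomorphism of the previous paragraphs shows $(\bunbbar{})_S$ is schematic and proper over $S$; as this holds naturally for every $S\to\bun$, the projection $\bunbbar{}\to\bun$ is schematic and proper.
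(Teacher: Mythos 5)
First, a remark on the comparison you asked me to make implicitly: the paper gives no proof of this lemma at all. It is quoted from \cite[Prop.~1.2.2]{BG} with a terminal \qed, so the only question is whether your blind attempt would actually establish the statement. Your forward map (tensoring the $\kappa_{\lambda_j}$ into $\mathcal{V}=\otimes_{j}\mathcal{V}^{\lambda_{j}}$ and checking flatness of the cokernel fibrewise via Lemma \ref{lem:density and flatness}) and your final paragraph on representability and properness are both sound and are the routine parts of the argument.

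The gap is in the inverse map, and it is not merely a deferred computation: the tool you name for it, Lemma \ref{lem:lbextension}, cannot do the job for two reasons. First, that lemma produces an extension only after passing to a Zariski cover $\left(\tilde{S},\tilde{U}\right)\rightarrow\left(S,U\right)$, and the extension it produces is non-canonical (it depends on a choice of trivializations and on the integer $l$ furnished by \cite[II.5.14]{Hart}). A locally defined, choice-dependent construction does not define a morphism of presheaves, so at best you would obtain a comparison after Zariski sheafification --- and not even that, since without canonicity the local constructions need not glue. The lemma as stated asserts a natural isomorphism of functors over $S$, with no sheafification. Second, the extension problem solved by Lemma \ref{lem:lbextension} is not the one that arises here: the degenerate extension across $S\times X$ is already given to you (it is the quasi-section $\mathcal{L}\hookrightarrow\mathcal{V}$ itself). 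What must be shown is that this single invertible subsheaf with $S$-flat cokernel, constrained to lie in the affine cone over $\mathcal{P}_{G}/B\hookrightarrow\mathbb{P}\left(\mathcal{V}\right)$, factors \emph{canonically and uniquely} through the Segre/Pl\"ucker embedding as a tensor product of invertible subsheaves $\lambda_{j}\left(\mathcal{P}_{H}\right)\hookrightarrow\mathcal{V}^{\lambda_{j}}$, each again with $S$-flat cokernel, and that these assemble into the full collection $\left\{ \kappa_{\lambda}\right\} _{\lambda\in\Lambda_{G}^{+}}$ satisfying the Pl\"ucker relations. That canonical factorization of a flat line subsheaf of a tensor product landing in a Segre cone is the genuine content of \cite[Prop.~1.2.2]{BG}; it is proved there by a dedicated argument and follows from nothing in the present paper. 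You correctly identify this step as the heart of the matter, but the mechanism you propose for it would fail, so the proof as written has a real hole exactly where the lemma is nontrivial.
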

\begin{example}
When $G=SL_{2}$ the presheaf $\overline{\t{Bun}_{SL_{2}}^{B}}$ is
equivalent to the presheaf which sends a scheme $S$ to the groupoid
$\overline{\t{Bun}_{SL_{2}}^{B}}\left(S\right)$ classifying the data
$\left(\mathcal{L},\mathcal{V},\mathcal{L}\hookrightarrow\mathcal{V}\right)$,
where $\mathcal{L}$ is a line bundle on $S\times X$, $\mathcal{V}$
is a rank-$2$ vector bundle on $S\times X$ with trivial determinant,
and $\mathcal{L}\hookrightarrow\mathcal{V}$ is an injection of quasi-coherent
sheaves whose co-kernel is flat over $S$. 

Observe that when $S=\spec\left(k\right)$, we may associate to a
every degenerate reduction $\left(\mathcal{L},\mathcal{V},\mathcal{L}\hookrightarrow\mathcal{V}\right)\in\overline{\t{Bun}_{SL_{2}}^{B}}\left(k\right)$
the genuine reduction $\left(\tilde{\mathcal{L}},\mathcal{V},\tilde{\mathcal{L}}\xrightarrow{\subseteq}\mathcal{V}\right)\in\t{Bun}_{B}\left(k\right)$
where $\tilde{\mathcal{L}}$ is the maximal sub-bundle, $\mathcal{L}\hookrightarrow\tilde{\mathcal{L}}\subseteq\mathcal{V}$
extending $\mathcal{L}$. However, there may not exist such extension
for an arbitrary $S$-family%
\footnote{For essentially the same reason that a continuous function $\mathbb{R}^{2}\setminus\left\{ 0\right\} \rightarrow\mathbb{R}$
may admit a continuous extension when restricted to any path, but
nonetheless fail to admit a global continuous extension. %
}.
\end{example}
We wish to use $\overline{\bun[B]}$ to construct a geometrization
for $B\left(K\right)\backslash G\left(\mathbb{A}\right)/G\left(\mathbb{O}\right)$.
Note that on the level of $k$-points there exists a surjective map
\[
\pi_{0}\left(\overline{\bun[B]}\left(k\right)\right)\rightarrow B\left(K\right)\backslash G\left(\mathbb{A}\right)/G\left(\mathbb{O}\right)
\]
but that this map is not bijective.

\subsubsection{{}}

Gaitsgory's $\dmod\left(\t{Bun}_{B}^{\t{rat}}\right)$ of \cite[subscetion 1.1]{DG-Whit}
may be defined as follows: To every point $P\in\overline{\bun[B]}\left(S\right)$
we may associate its \emph{regular} domain $U_{P}\subseteq S\times X$,
this is the maximal open subscheme where the Plucker data is regular,
and hence defines a genuine structure reduction of $\mathcal{P}_{G}\big|_{U_{P}}$
to $B$. 

Define $\mathcal{H}\in\pshv{\aff}$ be the presheaf which sends $S$
to the groupoid classifying the data 
\[
\left(P\in\overline{\bun[B]},P'\in\overline{\bun[B]},\phi\right)
\]
 where $\phi$ is an isomorphism of the underlying $G$-torsors (defined
on all of $S\times X$), which commutes with the $\kappa_{\lambda}$'s
over $U_{P}\cap U_{P^{'}}$ (hence induces an isomorphism of $B$-reductions
there). It is evident that $\mathcal{H}$ admits a groupoid structure
(in presheaves) over $\overline{\bun[B]}$. In loc. cit., $\dmod\left(\t{Bun}_{B}^{\t{rat}}\right)$
is defined to be the category of equivariant D-modules with respect
to this groupoid.

On the level of points, we may define $\overline{\bun[B]}^{\mathcal{H}}$
to be the quotient of $\bunbbar{}$ by this groupoid (i.e., the colimit
of the associated simplicial object in $\pshv{\aff}$). It follows
that $\dmod\left(\t{Bun}_{B}^{\t{rat}}\right)\cong\dmod\left(\bunbbar{}^{\mathcal{H}}\right)$.
After taking this quotient, we do have an identification of sets 
\[
\pi_{0}\left(\bunbbar{}^{\mathcal{H}}\left(k\right)\right)\cong B\left(K\right)\backslash G\left(\mathbb{A}\right)/G\left(\mathbb{O}\right)
\]

The main result of this section is:
\begin{prop}
\label{prop:B_bar points equiv}There exists a map in $\pshv{\aff}$
\[
\left(\bunbbar{}\right)^{\mathcal{H}}\rightarrow\bun[B\left(\t{gen}\right)]
\]
which becomes an equivalence after sheafification in the Zariski topology.
\end{prop}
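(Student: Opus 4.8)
The plan is to follow, verbatim in spirit, the template of Proposition \ref{prop:quasimap equiv} and its twisted relative Proposition \ref{prop:quasisect equiv}, with quasi-maps replaced by quasi-sections into the flag bundle. First I would construct the comparison map. To a point $P\in\bunbbar{}\left(S\right)$ one associates its regular domain $U_{P}\subseteq S\times X$, over which the Plucker maps $\kappa_{\lambda}$ are sub-bundle embeddings and hence define a genuine reduction of $\mathcal{P}_{G}\big|_{U_{P}}$ to $B$; this is a point of $\bun[B\left(\dom\right)]\left(S,U_{P}\right)$, and thus of $\bun[B\left(\t{gen}\right)]\left(S\right)$ via the colimit description in remark \ref{associated ordinary}. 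Unwinding the definition of the groupoid $\mathcal{H}$ then shows that it is precisely the fiber product $\mathcal{H}\cong\bunbbar{}\times_{\bun[B\left(\t{gen}\right)]}\bunbbar{}$: two points of $\bunbbar{}$ have isomorphic generic reductions exactly when their underlying $G$-torsors are isomorphic via a $\phi$ matching the reductions over $U_{P}\cap U_{P'}$. Since groupoid objects are effective in the Zariski $\infty$-topos, this Cartesian square means the induced map $\left(\bunbbar{}\right)^{\mathcal{H}}\to\bun[B\left(\t{gen}\right)]$ is a monomorphism (fully faithful) after sheafification. It therefore remains only to prove that it is a local epimorphism in the Zariski topology.

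For surjectivity I would argue exactly as in \ref{proof:prop:quasimap equiv}. Any object of $\bun[B\left(\t{gen}\right)]\left(S\right)$ lifts, by filteredness of $\left(\dom\right)_{S}$, to an honest generic reduction represented by a $G$-torsor $\mathcal{P}_{G}$ on $S\times X$ together with a genuine reduction to $B$ over a domain $U$. Such a reduction is the same datum as a genuine section over $U$ of the flag bundle $\mathcal{P}_{G}/B\to S\times X$, so it presents a point of $\gsect[][S]{S\times X}{\mathcal{P}_{G}/B}$. By Lemma \ref{lem:bunbbar representable}, the fiber $\left(\bunbbar{}\right)_{S}$ is identified with the space of quasi-sections $\qsect[][S]{S\times X}{\mathcal{P}_{G}/B}$ attached to the closed embedding $\mathcal{P}_{G}/B\hookrightarrow\mathbb{P}\left(\mathcal{V}\big|_{S\times X}\right)$. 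Proposition \ref{prop:quasisect equiv}, applied to $Y=\mathcal{P}_{G}/B$, then produces after a Zariski cover $\tilde{S}\to S$ a quasi-section extending the given generic section, i.e. a point of $\bunbbar{}\left(\tilde{S}\right)$ mapping to our chosen generic reduction. Concretely this lift is the degenerate extension of Lemma \ref{lem:lbextension}. Using the quasi-section (rather than extending each $\kappa_{\lambda}$ separately) is what makes the Plucker relations extend for free, since they are built into the ideal cutting out $\mathcal{P}_{G}/B$ inside $\mathbb{P}\left(\mathcal{V}\right)$.

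I expect the main obstacle to be the groupoid bookkeeping of the first paragraph rather than the surjectivity input, which is essentially formal once Lemma \ref{lem:bunbbar representable} and Proposition \ref{prop:quasisect equiv} are in hand. One must verify carefully that $\mathcal{H}$ reproduces the full homotopy fiber product $\bunbbar{}\times_{\bun[B\left(\t{gen}\right)]}\bunbbar{}$ and not merely a set-theoretic relation, so that no automorphisms of the $G$-torsor are lost or double-counted when comparing the isomorphism $\phi$ appearing in $\mathcal{H}$ with the isomorphisms in the groupoid $\bun[B\left(\t{gen}\right)]\left(S\right)$. The delicate point is that the genericity lives only in the reduction while $\mathcal{P}_{G}$ and its isomorphisms are defined globally on $S\times X$, so the equivalences in the fiber product are detected over the dense common domain $U_{P}\cap U_{P'}$ yet are isomorphisms of globally defined torsors; matching these two descriptions (and checking that a map of torsion-free sheaves agreeing over a dense open agrees everywhere, as in the proof of Lemma \ref{lem:equiv rel}) is where I would spend the most care.
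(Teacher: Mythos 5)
Your proposal is correct and takes essentially the same approach as the paper: the paper also identifies, for each $S\rightarrow\bun$, the fiber $\left(\bunbbar{}\right)_{S}$ with $\qsect[][S]{S\times X}{\mathcal{P}_{G}^{S}/B}$ via Lemma \ref{lem:bunbbar representable}, recognizes $\mathcal{H}_{S}$ as the fiber product of two copies of this over $\gsect[][S]{S\times X}{\mathcal{P}_{G}^{S}/B}$, and invokes Proposition \ref{prop:quasisect equiv}. The only (immaterial) difference is organizational: the paper assembles the fiberwise equivalences into a colimit over $S\rightarrow\bun$ rather than splitting the global map into a monomorphism and an effective epimorphism as you do.
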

\medskip

\noindent The following corollary is of particular interest in the
geometric Langlands program:
\begin{cor}
\label{cor:dmod equiv}~~
\begin{enumerate}
\item Pullback along the map constructed in \ref{prop:B_bar points equiv}
gives rise to an equivalence 
\[
\lim_{[n]\in\Delta^{\t{op}}}\left(\dmod\left(\mathcal{H}^{\left(n\right)}\right)\right)\cong\dmod\left(\left(\bunbbar{}\right)^{\mathcal{H}}\right)\leftarrow\dmod\left(\bun[B\left(\t{gen}\right)]\right)
\]
where 
\[
\mathcal{H}^{\left(n\right)}:=\underbrace{\mathcal{H}\times_{\bunbbar{}}\cdots\times_{\bunbbar{}}\mathcal{H}}_{n-\t{times}}
\]

\item The pullback functors 
\[
\dmod\left(\bunbbar{}\right)\xleftarrow{}\dmod\left(\bun[B\left(\t{gen}\right)]\right)\xleftarrow{}\dmod\left(\bun\right)
\]
admit left adjoints (``!-push-forward'').
\end{enumerate}
\end{cor}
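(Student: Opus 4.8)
\noindent The plan is to transcribe the proof of Corollary~\ref{cor:dmod equiv gmap}, with the pair $\left(\qmap Y,\mathcal{E}_Y\right)$ replaced by $\left(\bunbbar{},\mathcal{H}\right)$ and with everything taking place relative to the stack $\bun$ rather than over $\spec\left(k\right)$. For part~(1) I would argue exactly as there: Proposition~\ref{prop:B_bar points equiv} provides a map $\left(\bunbbar{}\right)^{\mathcal{H}}\to\bun[B\left(\t{gen}\right)]$ which is an equivalence after Zariski sheafification, and since Zariski covers are fppf covers and $\dmod$ satisfies fppf descent (subsection~\ref{the topology on dom}), this map induces an equivalence $\dmod\left(\bun[B\left(\t{gen}\right)]\right)\xrightarrow{\cong}\dmod\left(\left(\bunbbar{}\right)^{\mathcal{H}}\right)$. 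By construction $\left(\bunbbar{}\right)^{\mathcal{H}}$ is the colimit in $\pshv{\aff}$ of the simplicial object $\mathcal{H}^{\bullet}$, and because $\dmod$ preserves small limits (equivalently, sends colimits of presheaves to limits of categories) we get $\dmod\left(\left(\bunbbar{}\right)^{\mathcal{H}}\right)\cong\lim_{[n]\in\Delta^{\t{op}}}\dmod\left(\mathcal{H}^{\left(n\right)}\right)$; composing the two equivalences yields (1).

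The substance is part~(2), whose only new ingredient is a properness statement for the face maps of $\mathcal{H}^{\bullet}$, playing the role of Lemma~\ref{lem:equiv rel}. The key observation is that, because $\bun$ is a \emph{stack}, the fibre product $\bunbbar{}\times_{\bun}\bunbbar{}$ classifies triples $\left(P,P',\phi\right)$ with $\phi\colon\mathcal{P}_G\xrightarrow{\cong}\mathcal{P}'_G$ an isomorphism of the underlying $G$-torsors, so that $\mathcal{H}$ is precisely the locus inside $\bunbbar{}\times_{\bun}\bunbbar{}$ on which $\phi$ is compatible with the two Pl\"{u}cker reductions over $U_P\cap U_{P'}$. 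This exhibits $\mathcal{H}$ as the exact analog of $\mathcal{E}_Y\subseteq\qmap Y\times\qmap Y$, now relative to $\bun$. I would then fix $S\to\bun$ classifying $\mathcal{P}_G$, use Lemma~\ref{lem:bunbbar representable} to identify $\left(\bunbbar{}\right)_S$ with the quasi-sections $\qsect[][S]{S\times X}{\mathcal{P}_G/B}$, and invoke the quasi-section analog of Lemma~\ref{lem:equiv rel} (guaranteed by Proposition~\ref{prop:quasisect equiv}) to conclude that $\mathcal{H}\hookrightarrow\bunbbar{}\times_{\bun}\bunbbar{}$ is a closed embedding with proper projections. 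Consequently every face map of $\mathcal{H}^{\bullet}$ is proper, each D-module pullback functor admits a left adjoint (\mbox{!-pushforward}), and the assignment $[n]\mapsto\dmod\left(\mathcal{H}^{\left(n\right)}\right)$ refines to a simplicial diagram $\dmod_{!}\left(\mathcal{H}^{\bullet}\right)$ alongside the cosimplicial diagram $\dmod^{!}\left(\mathcal{H}^{\bullet}\right)$ of part~(1). Applying the adjoint-diagrams machinery (\ref{lem:adjoint diagrams}) to the projection $\lim_{\Delta^{\t{op}}}\dmod^{!}\left(\mathcal{H}^{\bullet}\right)\to\dmod\left(\bunbbar{}\right)$ onto the $\left[0\right]$-term produces the left adjoint to the pullback $\dmod\left(\bun[B\left(\t{gen}\right)]\right)\to\dmod\left(\bunbbar{}\right)$.

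For the remaining pullback $\dmod\left(\bun\right)\to\dmod\left(\bun[B\left(\t{gen}\right)]\right)$ I would use that $\bunbbar{}\to\bun$ is schematic and proper (Lemma~\ref{lem:bunbbar representable}); hence each $\mathcal{H}^{\left(n\right)}$ is proper over $\bun$, the diagram $\dmod_{!}\left(\mathcal{H}^{\bullet}\right)$ is augmented over $\dmod\left(\bun\right)$, and the adjoint-diagrams machinery again identifies this augmentation with the pullback and supplies its left adjoint. Note that, unlike the point-target case of Corollary~\ref{cor:dmod equiv gmap}(2), no closedness hypothesis is needed, since $\bunbbar{}\to\bun$ is always proper. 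The main obstacle is precisely the properness claim for $\mathcal{H}^{\bullet}$: one must check that the generic-compatibility condition on $\phi$ cuts out a closed subscheme of $\bunbbar{}\times_{\bun}\bunbbar{}$ with proper projections, which is where the reduction to the quasi-section equivalence relation of Proposition~\ref{prop:quasisect equiv}, via Lemma~\ref{lem:bunbbar representable}, does the real work.
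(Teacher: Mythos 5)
Your proposal is correct and follows exactly the route the paper intends: the paper's entire proof of this corollary is the single sentence that it is ``completely analogous'' to Corollary \ref{cor:dmod equiv gmap}, and you have carried out that analogy faithfully, including the one genuinely new ingredient (properness of $\mathcal{H}\hookrightarrow\bunbbar{}\times_{\bun}\bunbbar{}$ via Lemma \ref{lem:bunbbar representable} and the quasi-section version of Lemma \ref{lem:equiv rel}) and the correct observation that properness of $\bunbbar{}\to\bun$ replaces the closed-embedding hypothesis needed in the absolute case. The only minuscule imprecision is attributing the quasi-section analog of Lemma \ref{lem:equiv rel} to Proposition \ref{prop:quasisect equiv} rather than to the paper's blanket remark in \ref{sub:Quasi-sections} that all results of Section \ref{sec:Quasi-maps} carry over to the relative twisted setting; this does not affect the argument.
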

In Theorem \ref{thm:more_cont} we shall prove that the pullback functor
is moreover fully-faithful. The proof of this corollary is completely
analogous to that of corollary  \ref{cor:dmod equiv gmap}.

\subsubsection{Proof of proposition \ref{prop:B_bar points equiv}}

We proceed to reduce the statement to proposition \ref{prop:quasisect equiv}.
For every $S\rightarrow\bun$ denote 
\[
\left(\bunbbar{}\right)_{S}:=S\times_{\bun}\bunbbar{}
\]
and denote similarly for $\bun[B\left(\t{gen}\right)]$ and $\mathcal{H}$. 

It follows from lemma \ref{lem:bunbbar representable} that 
\[
\left(\bunbbar{}\right)_{S}\cong\qsect[][S]{S\times X}{\mathcal{P}_{G}^{S}/B}
\]
and it is evident that 
\[
\left(\bun[B\left(\t{gen}\right)]\right)_{S}\cong\gsect[][S]{S\times X}{\mathcal{P}_{G}^{S}/B}
\]
and that $\mathcal{H}_{S}$ is equivalent to the fiber product 
\[
\xymatrix{\mathcal{H}_{S}\ar[r]\ar[d] & \qsect[][S]{S\times X}{\mathcal{P}_{G}^{S}/B}\ar[d]\\
\qsect[][S]{S\times X}{\mathcal{P}_{G}^{S}/B}\ar[r] & \gsect[][S]{S\times X}{\mathcal{P}_{G}^{S}/B}
}
\]

Thus we obtain maps, for every $S\rightarrow\bun$, 
\[
\left(\bunbbar{\left(\t{gen}\right)}\right)_{S}\Big/\mathcal{H}_{S}\xrightarrow{\cong}\left(\bun[B\left(\t{gen}\right)]\right)_{S}
\]
which become equivalences after sheafification in the Zariski topology
by proposition \ref{prop:quasisect equiv}. These maps are all natural
in $S\rightarrow\bun$, and we conclude the existence of a map of
presheaves 
\[
\bunbbar{}^{\mathcal{H}}\cong\underset{S\rightarrow\bun}{\colim}\left(\bunbbar{}\right)_{S}\Big/\mathcal{H}_{S}\xrightarrow{}\underset{S\rightarrow\bun}{\colim}\left(\bun[B\left(\t{gen}\right)]\right)_{S}\cong\bun[B\left(\t{gen}\right)]
\]
which becomes an equivalence after sheafification in the Zariski topology.
\qed
\begin{rem}
\emph{Drinfeld's Parabolic structures}. In \cite[1.3]{BG}, Braverman
and Gaitsgory consider two different notions (attributed to Drinfeld
in loc. cit.) of a degenerate reduction of a $G$-torsor (on $X$)
to $P$. These two notions agree in the case when $P=B$, but differ
in general. Correspondingly, they construct two different relative
compactification of the map $\t{Bun}_{P}\rightarrow\bun$, denoted
$\overline{\t{Bun}_{P}}$ and $\widetilde{\t{Bun}_{P}}$, both schematic
and proper over $\bun$. The categories of D-modules $\dmod\left(\overline{\t{Bun}_{P}}\right)$
and $\dmod\left(\widetilde{\t{Bun}_{P}}\right)$ have received a fair
amount of attention (e.g., in \cite{BG,BFGM}) due to their part in
the construction of a geometric ``Eisenstein series'' functor 
\[
\dmod\left(\bun\right)\xleftarrow{\t{Eis}_{M}^{G}}\dmod\left(\t{Bun}_{M}\right)
\]
where $M$ is the Levi factor of $P$.

It can be shown that $\overline{\t{Bun}_{P}}$ and $\widetilde{\t{Bun}_{P}}$
give rise to  two different presentations of $\bun[P\left(\t{gen}\right)]$
(up to fppf sheafification) as a quotient of a scheme (relative to
$\bun$) by a schematic and proper equivalence relation 
\[
\]
\[
\left(\overline{\t{Bun}_{P}}\right)/\overline{\mathcal{H}_{P}}\xrightarrow{}\bun[P\left(\t{gen}\right)]\,\,\,\t{and}\,\,\,\left(\widetilde{\t{Bun}_{P}}\right)/\widetilde{\mathcal{H}_{P}}\xrightarrow{}\bun[P\left(\t{gen}\right)]
\]
Consequently, we obtain two different presentations for the category
of D-modules on $\bun[P\left(\t{gen}\right)]$ as a category of equivariant
objects $ $ $ $
\[
\dmod\left(\left(\overline{\t{Bun}_{P}}\right)\right)^{\overline{\mathcal{H}_{P}}}\xleftarrow{\cong}\dmod\left(\bun[P\left(\t{gen}\right)]\right)
\]
and 
\[
\dmod\left(\left(\widetilde{\t{Bun}_{P}}\right)\right)^{\widetilde{\mathcal{H}_{P}}}\xleftarrow{\cong}\dmod\left(\bun[P\left(\t{gen}\right)]\right)
\]
\end{rem}

\specialsection{\label{sec:ran_approach}The Ran Space Approach to Parametrizing
Domains}

In this section we describe an approach to presenting moduli problems
of generic data using\emph{ }presheaves over the \emph{Ran space}.
This approach has the advantage that D-modules presented this way
are amenable to Chiral Homology techniques. After presenting the framework
we prove that it is equivalent, in an appropriate sense, to the $\dom$
approach.

\subsection{The Ran Space}

Let $\finsur$ denote the category of finite sets with surjections
as morphisms. The \emph{Ran space, }denoted $\ran$, is the colimit
of the diagram 
\[
\finsurop\xrightarrow{I\mapsto X^{I}}\pshv{\aff}
\]
in which a surjection of finite sets $J\twoheadleftarrow I$ maps
to the corresponding diagonal embedding $X^{J}\hookrightarrow X^{I}$.
In the appendix (\ref{lem:ran fibd in sets}) it is proven that  a
point $S\rightarrow\ran$ is equivalent to the data of a finite subset
$F\subset\t{Hom}\left(S,X\right)$, i.e., $\ran\left(S\right)\in\gpd_{\infty}$
is the set of finite subsets of $\t{Hom}\left(S,X\right)$. Note that
$\ran$ is not a sheaf even in the Zariski topology (it is not separated),
and in any case its sheafifications are not representable. by a scheme
or ind-scheme.

Morally, the Ran space should be thought of as the moduli space for
finite subsets%
\footnote{We emphasize the distinction between finite subsets and finite subschemes.%
} of $X$, as is reflected in the fact that a closed point $\spec\left(k\right)\rightarrow\ran$
corresponds to a finite subset $F\subset X\left(k\right)$. More generally,
to a point $S\rightarrow\ran$ classified by $F=\left\{ f_{1},\ldots,f_{n}\right\} \subseteq\t{Hom}\left(S,X\right)$,
we associate the closed subspace $\Gamma_{F}:=\cup\Gamma_{f_{i}}\subseteq S\times X$,
where $\Gamma_{f_{i}}\subseteq S\times X$ is the graph of $S\xrightarrow{f_{i}}X$.
However, since we are concerned with generic data, we take the opposite
perspective and interpret such a point as parametrizing the complement
open subscheme 
\[
U_{F}:=\left(S\times X\right)\setminus\Gamma_{F}
\]
which is family of domains in the sense of \ref{def:dom}. We point
out that because $X$ is a curve, every open subscheme is the complement
of a finite collection of points, whence we are justified in thinking
of $\ran$ as a moduli of open subschemes of $X$. It would seem that
for a higher dimensional scheme in place of $X$, this approach would
not be reasonable. 

There are two differences between $\ran$ and $\dom$. The first concerning
objects, is that not every family of domains $\left(S,U\right)\in\dom$
may be presented using a map $S\rightarrow\ran$, thus $\ran$ classifies
a restrictive collection of domain families - \emph{graph complements}.
The second difference, concerning morphisms, is that while the fibers
$\dom$ over $\aff$ are posets, $\ran$ takes values in sets, and
thus does not account for the inclusion of one finite subset in another.

\subsubsection{Preview\label{sub:monad Motivation}\label{sub:Preview}}

Consider the moduli problem of classifying generically defined maps
from $X$ to $Y$. Construct a presheaf 
\[
\gmap Y_{\ran}\in\pshv{\aff}
\]
by defining that a map $S\rightarrow\gmap Y_{\ran}$ is presented
by the data 
\[
\left(S\xrightarrow{F}\ran,U_{F}\xrightarrow{f}Y\right)
\]
where $f$ is regular map. I.e., there is a map of presheaves 
\[
\gmap Y_{\ran}\rightarrow\ran
\]
and the points of $\gmap Y_{\ran}$, lying over a point $S\xrightarrow{F}\ran$,
classify those generic maps from $S\times X$ to $Y$ which are regular
on $U_{F}$. We would like to endow the functor of points $\gmap Y_{\ran}$
with additional structure which reflects the fact that a pair of its
points may be parametrizing the same generic data, but with different
domain data. 

\medskip

In the next subsection we shall define the Ran version of the category
of \emph{moduli problems of generic data} over $X$, and compare it
to the $\dom$ version defined in \ref{dom moduli problem} - they
will be almost equivalent. Namely, the respective functors of points
in $\pshv{\aff}$ associated to each formulation will be proven to
be equivalent after fppf sheafification. 

This Ran version will be defined as the category of modules for monad
acting on $\pshv{\aff}_{/\ran}$. This formulation, using the Ran
space and the monad, is more economical than the one via presheaves
on $\dom$, and gives rise to presentations of various invariants
of the moduli problem which are more approachable. 

A first approximation to the construction of the Ran version is as
follows: Let 
\[
\Ran\rightarrow\aff
\]
be the Cartesian fibration which is the Grothendieck un-straightening
of the functor 
\[
\aff^{\t{op}}\xrightarrow{\ran}\set
\]
I.e., $\Ran$ is the category of points of the functor of points $\ran$.
There is an evident functor $i'$ making the following diagram commute
\[
\xymatrix{\Ran\ar[rr]^{i'}\ar[dr] &  & \dom\ar[dl]^{q}\\
 & \aff
}
\]
The monad which we will use is a more economical version of the monad
on $\pshv{\aff}_{/\ran}\cong\pshv{\Ran}$ induced by the adjunction
\[
\xymatrix{\pshv{\Ran}\ar@<1ex>[r]^{\lke{i'}} & \pshv{\dom}\ar@<1ex>[l]^{i'_{*}}}
\]
The monad we shall use will be constructed using an intermediate domain
category presented in \ref{sub:domgamma}.

It turns out that for $\mathcal{F}\in\pshv{\dom}$, invariants such
as its homology are equivalent to those of $i_{*}^{'}\mathcal{F}\in\pshv{\Ran}\cong\pshv{\aff}_{/\ran}$,
which are often computable. In particular, in the example of generic
maps, Gaitsgory computes the homology of $\gmap Y_{\ran}$ (for certain
choices of $Y$), from which we deduce the homology of $\gmap Y$
(this is the topic of section \ref{sec:Cont_results}).

\subsection{$\dom^{\Gamma}$ - A more economical category of domains\label{sub:domgamma}}

Recall the Cartesian fibration $\Ran\rightarrow\aff$ defined in \ref{sub:Preview}.
As mentioned above, an object of $\Ran$ lying over a scheme $S$
may be interpreted as presenting a family of domains in $S\times X$
(the \emph{graph complement}), however $\Ran$ doesn't include morphisms
which account for the inclusion of one domain in another. We construct
the category $\dom^{\Gamma}$ by adding the appropriate morphisms:

\subsubsection{Construction}

Let $\dom^{\Gamma}$ be the following category: 
\begin{description}
\item [{An~object}] consists of the data $\left(S,F\subseteq\t{Hom}\left(S,X\right)\right)$,
where $S$ is an affine scheme, and $F$ is a non-empty finite subset. 
\item [{A~morphism}] $\left(S,F\subseteq\t{Hom}\left(S,X\right)\right)\rightarrow\left(T,G\subseteq\t{Hom}\left(T,X\right)\right)$
is a map of schemes $S\xrightarrow{f}T$ such that pre-composition
with $f$ carries $G$ into $F$. 
\end{description}
\medskip

It is evident that $\dom^{\Gamma}$ is sandwiched in a commuting diagram
\begin{equation}
\xymatrix{\Ran\ar[r]^{i}\ar[drr]^{s} & \dom^{\Gamma}\ar[rr]^{p}\ar[dr]^{r} &  & \dom\ar[dl]_{q}\\
 &  & \aff
}
\label{eq:functors}
\end{equation}
 in which $p$ associates to each $\left(S,F\subseteq\t{Hom}\left(S,X\right)\right)\in\dom^{\Gamma}$
the family of domains $U_{F}:=S\times X\setminus\Gamma_{F}$, where
$\Gamma_{F}$ is the union of the graphs of the maps in $F\subseteq\t{Hom}\left(S,X\right)$.
Note that all three diagonal maps are Cartesian fibrations, and that
$i$ and $p$ preserve morphisms which are Cartesian over $\aff$.
We remark that $p$ is not full (but it is faithful). 

We endow $\dom^{\Gamma}$ with the fppf Grothendieck topology pulled
back from $\dom$ along $p$. I.e., a collection of morphisms 
\[
\left\{ \left(S_{i},F_{i}\subseteq\t{Hom}\left(S_{i},X\right)\right)\rightarrow\left(S,F\subseteq\t{Hom}\left(T,X\right)\right)\right\} 
\]
is a cover iff the collection of scheme morphisms $\left\{ U_{F_{i}}\rightarrow U_{F}\right\} $
is an fppf cover in $\aff$.
\begin{prop}
\label{prop:domran}The adjunction 
\[
\xymatrix{\pshv{\dom^{\Gamma}}\ar@<1ex>[r]^{\lke p} & \pshv{\dom}\ar@<1ex>[l]^{p_{*}}}
\]
induces mutually inverse equivalences after sheafification in the
fppf Grothendieck topology.
\end{prop}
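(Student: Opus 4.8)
The plan is to realize $p$ as a morphism of sites which is an equivalence, and to deduce the statement by checking that the unit and counit of $(\lke p,p_{*})$ become equivalences after fppf-sheafification. Since the topology on $\dom^{\Gamma}$ is by construction pulled back along $p$, the functor $p$ is continuous, so $p_{*}$ preserves sheaves and the sheaf-level adjunction $((\lke p)^{\#},p_{*})$, with $(\lke p)^{\#}$ the sheafification of the left Kan extension, exists. It therefore suffices to prove: \textbf{(A)} the counit $\lke p\,p_{*}\mathcal{G}\to\mathcal{G}$ is an fppf-local equivalence for every sheaf $\mathcal{G}$ on $\dom$; and \textbf{(B)} the unit $\mathcal{F}\to p_{*}\lke p\,\mathcal{F}$ is an fppf-local equivalence for every sheaf $\mathcal{F}$ on $\dom^{\Gamma}$. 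It is convenient to factor $p$ through the full subcategory $\dom^{gc}\subseteq\dom$ of \emph{graph-complement} domains, equipped with the induced topology.

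The essential geometric input is the claim that graph complements are cover-dense in $\dom$: every $(S,U)\in\dom$ admits a cover, tested as always on the associated open, by objects $U_{F}$ in the image of $p$. I would establish this as follows. By Lemma \ref{lem:trivializing cover} we may pass to a finite Zariski cover $\{(S_{i},U_{i})\to(S,U)\}$ in which each $U_{i}\subseteq S_{i}\times X$ is a divisor complement. By Lemma \ref{lem:density and flatness} the corresponding $\mathcal{O}/\mathcal{L}_{i}$ is $S_{i}$-flat, so the removed locus is a relative effective Cartier divisor $D_{i}$ on $S_{i}\times X$, finite flat over $S_{i}$ of some locally constant degree $n_{i}$. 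Classifying $D_{i}$ by a map $S_{i}\to X^{(n_{i})}=\t{Sym}^{n_{i}}X$ and pulling back along the finite flat cover $X^{n_{i}}\to X^{(n_{i})}$, I obtain an fppf cover $S_{i}'\to S_{i}$ over which $D_{i}$ splits as a sum of graphs of sections $S_{i}'\to X$; hence $U_{i}\times_{S_{i}}S_{i}'=U_{F_{i}}$ is a graph complement. The composite family $\{(S_{i}',U_{F_{i}})\to(S,U)\}$ is then the desired cover by objects in the image of $p$. By the comparison lemma for the full, cover-dense subcategory $\dom^{gc}\subseteq\dom$ this yields an equivalence $\shv{\dom^{gc}}{}\simeq\shv{\dom}{}$, which is exactly assertion (A).

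For (B) the point is that $p$, viewed as a functor $\dom^{\Gamma}\to\dom^{gc}$, is faithful and essentially surjective, and is full up to covering isomorphisms. Faithfulness and essential surjectivity are clear. For the last property, given a morphism $p(S,F)\to p(T,G)$ in $\dom^{gc}$, i.e. a map $f:S\to T$ with $\Gamma_{G\circ f}\subseteq\Gamma_{F}$, I enlarge $F$ to $F'':=F\cup(G\circ f)$. Since $\Gamma_{F''}=\Gamma_{F}$ we have $U_{F''}=U_{F}$, so the tautological morphism $(S,F'')\to(S,F)$ maps to the identity of $U_{F}$ and is therefore a one-element cover; and now $G\circ f\subseteq F''$, so the composite lifts to a genuine morphism $(S,F'')\to(T,G)$ in $\dom^{\Gamma}$. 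Consequently, in the comma category computing $(\lke p\,\mathcal{F})(S,U_{F})$ the object $(S,F)$ is fppf-locally initial, so the colimit reduces after sheafification to $\mathcal{F}(S,F)$ and the unit is an fppf-local equivalence. The same enlargement shows that two finite sets with the same graph union are linked by covering morphisms, so the non-injectivity of $F\mapsto U_{F}$ on objects is likewise invisible after sheafification. Thus (B) holds and $\shv{\dom^{\Gamma}}{}\simeq\shv{\dom^{gc}}{}$.

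Composing the two equivalences gives $\shv{\dom^{\Gamma}}{}\simeq\shv{\dom}{}$, realized by the sheafifications of $\lke p$ and $p_{*}$, which are therefore mutually inverse; this is the assertion of the proposition. I expect the main obstacle to be the cover-density step (A): everything else is formal bookkeeping with the pulled-back topologies together with the enlargement trick, whereas (A) is where the geometry of the curve genuinely enters, through the representability of degree-$n$ divisors by $\t{Sym}^{n}X$ and the splitting of a relative Cartier divisor into sections after the fppf cover $X^{n}\to\t{Sym}^{n}X$. Care is also needed to check that the degree $n_{i}$ is only locally constant, so that one works componentwise on $S_{i}$, and that all of these constructions are natural as $(S,U)$ varies, so that (A) and (B) assemble into a genuine adjoint equivalence rather than merely a pointwise one.
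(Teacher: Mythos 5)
Your proposal follows essentially the same route as the paper: the paper also factors $p$ as $\dom^{\Gamma}\xrightarrow{p'}\dom^{00}\xrightarrow{j}\dom$ through the full subcategory of graph complements, proves cover-density (lemma \ref{lem:density}) exactly via lemma \ref{lem:trivializing cover} and the fppf cover $X^{n}\rightarrow\mathcal{H}ilb_{X}^{n}\cong X^{(n)}$, handles $j$ by the comparison lemma \ref{Comparison lemma} (checking additionally that fibered products of graph complements are graph complements), and handles $p'$ by the same enlargement trick $F\mapsto F\cup f^{*}G$. The one place where your sketch leans on an imprecise notion ("fppf-locally initial", "formal bookkeeping") is exactly what the paper's lemma \ref{lem:dirty work} makes rigorous, via cofinality of the fiber inclusion $(C_{d})^{\t{op}}\rightarrow(C_{d/})^{\t{op}}$ and the observation that a sheaf is constant on the weakly contractible poset fibers because every morphism in a fiber is a cover with constant \v{C}ech complex.
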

The proof is given in \ref{sub:proof of prop domran}. The upshot
of this subsection is the following corollary:
\begin{cor}
\label{cor:The-upshot} There exists a naturally commuting triangle
\[
\xymatrix{\pshv{\dom^{\Gamma}}\ar[dr]_{\ \lke r} &  & \pshv{\dom}\ar[dl]^{\ \lke q}\ar[ll]_{p_{*}}\\
 & \shv{\aff;\t{fppf}}{}
}
\]
Consequently, for every $\mathcal{F}\in\pshv{\dom}$ D-module pullback
gives rise to an equivalence 
\[
\dmod\left(p_{*}\mathcal{F}\right)\xleftarrow{\cong}\dmod\left(\mathcal{F}\right)
\]

\end{cor}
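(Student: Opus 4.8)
The strategy is to deduce the corollary formally from Proposition \ref{prop:domran} together with the continuity of $q$ recorded in \S\ref{the topology on dom}; the genuine content resides in Proposition \ref{prop:domran}, which I take as given. From the commuting diagram \eqref{eq:functors} we have $r = q\circ p$, whence left Kan extensions compose to give a canonical identification $\lke r\cong\lke q\circ\lke p$ of functors $\pshv{\dom^{\Gamma}}\to\pshv{\aff}$ (equivalently, $r_{*}=p_{*}\circ q_{*}$, and one passes to left adjoints). The counit of the adjunction $\left(\lke p,p_{*}\right)$ supplies a natural transformation $\lke p\circ p_{*}\to\t{id}_{\pshv{\dom}}$, and applying $\lke q$ produces a natural map
\[
\lke r\circ p_{*}\;=\;\lke q\circ\lke p\circ p_{*}\;\longrightarrow\;\lke q .
\]
Establishing that this map is an equivalence after fppf sheafification is exactly the assertion that the triangle commutes with target $\shv{\aff;\t{fppf}}{}$.

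To see this, first invoke Proposition \ref{prop:domran}: since $\left(\lke p,p_{*}\right)$ descends to a pair of mutually inverse equivalences of fppf topoi, its counit $\lke p\circ p_{*}\to\t{id}$ is an fppf-local equivalence in $\pshv{\dom}$ (it is \emph{not} a pointwise equivalence, and this is the one place where the formalism is not immediate). Next recall from \S\ref{the topology on dom} that $q$ is continuous and preserves finite limits, so that $q^{*}:=\left(\t{sheafify}\right)\circ\lke q$ is a well-defined, left-exact functor $\shv{\dom;\t{fppf}}{}\to\shv{\aff;\t{fppf}}{}$; in particular $\t{sheafify}\circ\lke q$ inverts fppf-local equivalences. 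Applying $\t{sheafify}\circ\lke q$ to the counit therefore turns the displayed map into an equivalence in $\shv{\aff;\t{fppf}}{}$, natural in the input, which is the promised commuting triangle.

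Finally, for the statement about D-modules, recall that $\dmod\left(\mathcal{F}\right)=\dmod\left(\lke q\mathcal{F}\right)$ and, by the analogous definition via $\lke r$, that $\dmod\left(p_{*}\mathcal{F}\right)=\dmod\left(\lke r\,p_{*}\mathcal{F}\right)$. Since $\affop\xrightarrow{\dmod}\catexl$ factors through fppf sheafification by \cite[Cor 3.1.4]{GaRo}, the invariant $\dmod$ depends on a functor of points only through its fppf sheafification. The triangle just proved identifies the fppf sheafifications of $\lke r\,p_{*}\mathcal{F}$ and $\lke q\mathcal{F}$, so D-module pullback along the map $\lke r\,p_{*}\mathcal{F}\to\lke q\mathcal{F}$ is an equivalence $\dmod\left(\mathcal{F}\right)\xrightarrow{\cong}\dmod\left(p_{*}\mathcal{F}\right)$. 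The only obstacle of substance lies upstream, in Proposition \ref{prop:domran}; granting it, the corollary is a purely formal consequence of the composition of Kan extensions, the continuity of $q$, and fppf descent for $\dmod$.
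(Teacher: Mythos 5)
Your proposal is correct and follows the same route the paper intends: the corollary is stated there without proof as the immediate formal consequence of Proposition \ref{prop:domran}, using $r=q\circ p$, the counit of $\left(\lke p,p_{*}\right)$ becoming an fppf-local equivalence, the left-exact continuity of $q$ from \S\ref{the topology on dom}, and the fact that $\dmod$ factors through fppf sheafification. You have simply written out the deduction the paper leaves implicit, and you correctly identify Proposition \ref{prop:domran} as the only substantive input.
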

The point being, that when formulating moduli problems of generic
data as functors of points, it suffices to describe their points over
$\dom^{\Gamma}$, rather than over the much larger category $\dom$.

\subsection{The Ran formulation of moduli problems of generic data}

\subsubsection{The monad}

Recall the functor $\Ran\xrightarrow{i}\dom^{\Gamma}$ introduced
in \ref{eq:functors}. Let $\mathcal{M}$ denote the monad on $\pshv{\aff}_{/\ran}\cong\pshv{\Ran}$
induced by the adjunction $\xymatrix{\pshv{\Ran}\ar@<1ex>[r]^{\lke i} & \pshv{\dom^{\Gamma}}\ar@<1ex>[l]^{i_{*}}}
$. I.e., its underlying endofunctor is $i_{*}\circ\ \lke i$, and its
unit and action transformations are induced by the adjunction unit
and co-unit.
\begin{rem}
\label{rem:action explicit}The action of $\mathcal{M}$ is pretty
simple. Let $\mathcal{G}\in\pshv{\Ran}$, let $\left(S,F\right)\in\Ran$,
and let us compute the value of $\mathcal{M}\left(\mathcal{G}\right)$
at $\left(S,F\right)$ (we implicitly identify the objects of $\Ran$
and $\dom^{\Gamma}$): 
\[
\mathcal{M}\left(\mathcal{G}\right)\left(S,F\right)=\lke i\mathcal{G}\left(S,F\right)=\colim\left(\left(\left(\Ran\right){}_{\left(S,F\right)/}\right)^{\t{op}}\xrightarrow{\mathcal{G}}\gpd_{\infty}\right)\cong
\]
Since both $\Ran$ and $\dom^{\Gamma}$ are fibered over $\aff$ we
can fix $S$ so that 

\[
\cong\colim\left(\left(\left(\Ran\left(S\right)\right){}_{\left(S,F\right)/}\right)^{\t{op}}\xrightarrow{\mathcal{G}}\gpd_{\infty}\right)\cong
\]
The category $\left(\left(\Ran\left(S\right)\right){}_{\left(S,F\right)/}\right)^{\t{op}}$
is discrete so 
\[
\cong\coprod_{G\subseteq F}\mathcal{G}\left(S,G\right)
\]

\medskip \noindent The following definition is the counterpart of
(the first part of) definition \ref{dom moduli problem}:\end{rem}
\begin{defn}
\label{def:ran formulation}The \emph{Ran formulation} for\emph{ moduli
problems of generic data} is the category of modules for the monad
$\mathcal{M}$ which we denote by 
\[
\t{Mod}_{\mathcal{M}}=\t{Mod}_{\mathcal{M}}\left(\pshv{\aff}_{/\ran}\right)
\]
\end{defn}
\begin{rem}
Recall the functors in diagram \ref{eq:functors}. Starting from a
presheaf $\mathcal{F}\in\pshv{\dom}$, we may construct two different
functors of points: 
\[
\lke q\mathcal{F}\,\,\,\t{and}\,\,\,\lke s\left(i_{*}p_{*}\mathcal{F}\right)
\]
The one on the left is the one we have been referring to as the associated
functor of points (cf. \ref{dom moduli problem}). It should be interpreted
as the functor of points obtained by quotienting out domain data.
The one on the right should be interpreted as the functor of points
obtained by retaining the domain data, but ``forgetting'' how to
restrict data to a smaller domain. 
\end{rem}
The simplicial resolution in the following theorem is the main result
of the section. We recall that the functors denoted $p,q,r,s$ and
$i$ were introduced in diagram \ref{eq:functors}. Also recall, that
the functor $\pshv{\Ran}\xleftarrow{i_{*}}\pshv{\dom^{\Gamma}}$ canonically
factors through $\t{Mod}_{\mathcal{M}}$.
\begin{thm}
\label{thm:ran_dom_equiv}The canonical functor 
\[
\xymatrix{\t{Mod}_{\mathcal{M}} & \pshv{\dom^{\Gamma}}\ar[l]}
\]
is an equivalence. 

For every $\mathcal{F}\in\pshv{\dom}$ there exists an augmented simplicial
object in $\pshv{\aff}$

\begin{equation}
\xymatrix{\cdots\ar@<2ex>[r]\ar@<-2ex>[r]\ar@{}@<0.5ex>[r]|(0.3){\fixedvdots} & \lke s\left(\mathcal{M}^{2}\left(i_{*}p_{*}\mathcal{F}\right)\right)\ar[r]\ar@<2ex>[r]\ar@<-2ex>[r] & \lke s\left(\mathcal{M}\left(i_{*}p_{*}\mathcal{F}\right)\right)\ar@<1ex>[r]\ar@<-1ex>[r]\ar@<1ex>[l]\ar@<-1ex>[l] & \lke s\left(\left(i_{*}p_{*}\mathcal{F}\right)\right)\ar[l]\ar[d]\\
 &  &  & \lke q\mathcal{F}
}
\label{eq:resolution}
\end{equation}
which becomes a colimit diagram, after sheafification in the fppf
Grothendieck topology.
\end{thm}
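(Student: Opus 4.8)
The plan is to prove the two assertions separately but with a common engine: the fppf-localization of diagram \eqref{eq:functors} together with the Barr--Beck / monadicity formalism. First I would establish the equivalence $\t{Mod}_{\mathcal{M}}\simeq\pshv{\dom^{\Gamma}}$. The adjunction $\left(\lke i,i_{*}\right)$ factors $i_{*}$ through $\t{Mod}_{\mathcal{M}}$ by construction, and the comparison functor $\pshv{\dom^{\Gamma}}\rightarrow\t{Mod}_{\mathcal{M}}$ is the one whose inverse we seek. To run Lurie's $\infty$-categorical Barr--Beck (\cite[4.7.3.5]{HT} or the analogue), I would verify that $i_{*}$ is conservative and preserves $i_{*}$-split (geometric) colimits. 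Conservativity follows from the fact that $i\colon\Ran\to\dom^{\Gamma}$ is essentially surjective on objects up to the fppf topology --- indeed every $\left(S,F\subseteq\t{Hom}(S,X)\right)$ already lies in the image of $i$, so $i$ is a bijection on objects and $i_{*}$ is just restriction, which detects equivalences of presheaves objectwise. Preservation of the relevant colimits is automatic since $i_{*}$ is a right adjoint that, being restriction along a functor which is the identity on objects, also commutes with colimits computed objectwise. The content is thus that the monad $\mathcal{M}=i_{*}\lke i$ recovers all of $\pshv{\dom^{\Gamma}}$; I expect this to follow formally once conservativity is in hand.

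For the simplicial resolution, the strategy is to produce \eqref{eq:resolution} as the $\lke s$-image of the canonical bar resolution of $i_{*}p_{*}\mathcal{F}$ as an $\mathcal{M}$-module, and then identify its colimit with $\lke q\mathcal{F}$ after fppf sheafification. Concretely, for any $\mathcal{M}$-module $M$ --- in particular $M=i_{*}p_{*}\mathcal{F}$ under the equivalence just proved --- the two-sided bar construction gives an augmented simplicial object $\mathcal{M}^{\bullet+1}(M)\to M$ in $\pshv{\Ran}$ which is a colimit diagram (this is the standard monadic resolution, valid because $\lke i$ is left adjoint to the conservative $i_{*}$). Applying the colimit-preserving functor $\lke s\colon\pshv{\Ran}\to\shv{\aff}{}$ termwise yields the top row of \eqref{eq:resolution} with augmentation $\lke s(i_{*}p_{*}\mathcal{F})$, and since $\lke s$ preserves colimits the augmented diagram remains a colimit diagram. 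It therefore remains to identify the augmentation target $\lke s\,i_{*}p_{*}\mathcal{F}$ --- or rather the colimit of the resolution --- with $\lke q\mathcal{F}$ after fppf sheafification.

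This identification is where the real work lies, and it is the step I expect to be the main obstacle. The clean route is via Corollary \ref{cor:The-upshot}: there we have the naturally commuting triangle exhibiting $\lke r\circ p_{*}\simeq\lke q$ after fppf sheafification, so $\lke q\mathcal{F}\simeq\lke r(p_{*}\mathcal{F})$ in $\shv{\aff;\t{fppf}}{}$. Now $r=s\circ i$ need not hold on the nose, but the two functors $r\colon\dom^{\Gamma}\to\aff$ and $\Ran\xrightarrow{i}\dom^{\Gamma}\xrightarrow{r}\aff$ agree with $s$ on the subcategory $\Ran$, which is what lets the monadic resolution transport the $\lke r$-colimit into an $\lke s$-colimit. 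Quantitatively, I would argue that $\lke r(p_{*}\mathcal{F})\simeq\colim_{\Delta^{\t{op}}}\lke s(\mathcal{M}^{\bullet+1}i_{*}p_{*}\mathcal{F})$ by writing the left Kan extension $\lke r$ as a colimit over the fibers $(\dom^{\Gamma})_{S}$, and then resolving each such fiber-colimit by the $\mathcal{M}$-bar complex restricted to $\Ran(S)$, using Remark \ref{rem:action explicit} which makes $\mathcal{M}$ explicit as the coproduct over sub-subsets $G\subseteq F$. The delicate point is that these fiberwise identifications are equivalences only after fppf sheafification --- this is precisely the content imported from Proposition \ref{prop:domran} --- so I must be careful to apply fppf sheafification before passing to the colimit, and to invoke that fppf sheafification on $\aff$ preserves the finite limits and colimits at issue so that the colimit diagram survives. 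Once the augmentation is thereby matched with $\lke q\mathcal{F}$ in $\shv{\aff;\t{fppf}}{}$, the augmented simplicial object \eqref{eq:resolution} is a colimit diagram after fppf sheafification, as claimed. $\qed$
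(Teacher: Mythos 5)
Your proof of the first assertion is the paper's: $i_{*}$ is restriction along a functor which is bijective on objects, hence conservative and colimit-preserving, and Barr--Beck--Lurie gives $\t{Mod}_{\mathcal{M}}\simeq\pshv{\dom^{\Gamma}}$. The second half, however, has a genuine gap. You form the monadic bar resolution $\mathcal{M}^{\bullet+1}(M)\rightarrow M$ inside $\pshv{\Ran}$ with $M=i_{*}p_{*}\mathcal{F}$ and apply $\lke s$ termwise. Since that augmented object is split, its colimit is $M$ itself, so the diagram you obtain has colimit $\lke s(i_{*}p_{*}\mathcal{F})$; you then propose to identify $\lke s(i_{*}p_{*}\mathcal{F})$, equivalently $\colim_{\Delta^{\t{op}}}\lke s(\mathcal{M}^{\bullet+1}i_{*}p_{*}\mathcal{F})$, with $\lke q\mathcal{F}\simeq\lke r(p_{*}\mathcal{F})$ after fppf sheafification. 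That identification is false: $\lke s(i_{*}p_{*}\mathcal{F})=\lke r(\lke i\, i_{*}\, p_{*}\mathcal{F})$, and the counit $\lke i\, i_{*}\rightarrow\t{id}$ is not an fppf-local equivalence. The remark following Definition \ref{def:ran formulation} contrasts exactly these two functors of points (one quotients out the domain data, the other retains it), and Proposition \ref{prop:dmod ff} asserts only full faithfulness of the induced D-module pullback, not an equivalence. Concretely, for $\mathcal{F}=\gmap Y_{\dom}$ the map $\gmap Y_{\ran}\rightarrow\gmap Y$ is not even a local monomorphism: two points $(F,f)$ and $(F',f)$ with the same underlying generic map but different finite subsets cannot be identified on any cover. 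An index shift hides the problem in your write-up: the $n$-th term of \ref{eq:resolution} is $\lke s\mathcal{M}^{n}(i_{*}p_{*}\mathcal{F})$, not $\lke s\mathcal{M}^{n+1}(i_{*}p_{*}\mathcal{F})$, so $\lke s(i_{*}p_{*}\mathcal{F})$ is the \emph{zeroth term} of the simplicial object, not its augmentation or its colimit.

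The repair is to run the bar construction one category over: view $p_{*}\mathcal{F}$ as an object of $\pshv{\dom^{\Gamma}}\simeq\t{Mod}_{\mathcal{M}}$ and take its bar resolution \emph{in} $\pshv{\dom^{\Gamma}}$, namely the augmented simplicial object $\lke i\,\mathcal{M}^{\bullet}(i_{*}p_{*}\mathcal{F})\rightarrow p_{*}\mathcal{F}$, which is a colimit diagram by \cite[Thm 4.3.5.8]{HA} (its terms are $(\lke i\, i_{*})^{\bullet+1}p_{*}\mathcal{F}$, the comonadic resolution). Now apply $\lke r$, not $\lke s$, to the entire augmented diagram: since $r\circ i=s$ the terms become $\lke s\mathcal{M}^{\bullet}(i_{*}p_{*}\mathcal{F})$ as required, while the augmentation becomes $\lke r p_{*}\mathcal{F}$ --- and it is this object, not $\lke s i_{*}p_{*}\mathcal{F}$, that Proposition \ref{prop:domran} identifies with $\lke q\mathcal{F}$ after fppf sheafification. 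Your closing paragraph correctly locates the needed inputs (Corollary \ref{cor:The-upshot} and Proposition \ref{prop:domran}) but routes them through the wrong comparison.
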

\noindent In the simplicial complex above, $\mathcal{M}$ refers
to the endofunctor $i_{*}\circ\ \lke i$ underlying the eponymous
monad acting on $\pshv{\aff}_{/\ran}$. We remark that despite the
vagueness in the existence statement of the simplicial complex, it
is actually quite explicit, as will be explained in \ref{sub:resolution}. 
\begin{proof}
The first assertion is a consequence of the Bar-Beck-Lurie theorem
\cite[Thm 6.2.0.6]{HA}, since $i_{*}$ is conservative and colimit
preserving (it admits a right adjoint given by right Kan extension).

For the second assertion, the Bar construction for $p_{*}\mathcal{F}\in\pshv{\dom^{\Gamma}}\cong\t{Mod}_{\mathcal{M}}$
yields an augmented simplicial complex in $\pshv{\dom^{\Gamma}}$
\[
\xymatrix{\cdots\ar@<2ex>[r]\ar@<-2ex>[r]\ar@{}@<0.5ex>[r]|(0.3){\fixedvdots} & \ \lke i\mathcal{M}^{2}\left(i_{*}p_{*}\mathcal{F}\right)\ar[r]\ar@<2ex>[r]\ar@<-2ex>[r] & \ \lke i\mathcal{M}\left(i_{*}p_{*}\mathcal{F}\right)\ar@<1ex>[r]\ar@<-1ex>[r]\ar@<1ex>[l]\ar@<-1ex>[l] & \ \lke i\left(i_{*}p_{*}\mathcal{F}\right)\ar[l]\ar[r] & p_{*}\mathcal{F}}
\]
which is a colimit diagram \cite[Thm 4.3.5.8 or Prop 6.2.2.12]{HA}. 

The sought after complex in $\pshv{\aff}$ is obtained by applying
the functor $\lke r$ (note that $r\circ i=s$), and composing the
augmentation with $\ \lke rp_{*}\mathcal{F}\rightarrow\ \lke q\mathcal{F}$,
to obtain 
\[
\xymatrix{\cdots\lke s\mathcal{M}^{2}\left(i_{*}p_{*}\mathcal{F}\right)\ar[r]\ar@<2ex>[r]\ar@<-2ex>[r] & \lke s\mathcal{M}\left(i_{*}p_{*}\mathcal{F}\right)\ar@<1ex>[r]\ar@<-1ex>[r]\ar@<1ex>[l]\ar@<-1ex>[l] & \lke s\left(i_{*}p_{*}\mathcal{F}\right)\ar[l]\ar[d]\\
 &  & \lke q\mathcal{F}
}
\]
This augmented complex becomes a colimit diagram after sheafification
in the fppf topology, since $\lke r$ is colimit preserving and by
Proposition \ref{prop:domran}. 
\end{proof}

\subsubsection{\label{sub:resolution}}

We make the resolution constructed in the theorem explicit. For the
sake of concreteness, let us consider the case $\mathcal{F}=\gmap Y_{\dom}\in\pshv{\dom}$.
Recall the presheaf $\gmap Y_{\ran}$ introduced in \ref{sub:monad Motivation},
and observe that 
\[
i_{*}p_{*}\gmap Y_{\dom}=\gmap Y_{\ran}\in\pshv{\aff}_{/\ran}
\]
We denote an $S$-point of $\gmap Y_{\ran}\times\left(\ran\right)^{n}$
by $\left(f;F_{0},\cdots,F_{n}\right)$, where it is understood that
each $F_{i}$ is a finite subset of $\t{Hom}\left(S,X\right)$, and
that $f$ is a generic map from $S\times X$ to $Y$, defined on the
open subscheme determined by $F_{0}$.

Using remark \ref{rem:action explicit}, we see that the $n$'th term
of the simplicial complex \ref{eq:resolution} is the subsheaf 
\[
\lke s\left(\mathcal{M}^{n}\left(\gmap Y_{\ran}\right)\right)\subseteq\gmap Y_{\ran}\times\left(\ran\right)^{n}
\]
whose $S$-points are the tuples $\left(f,;F\subseteq F_{1}\cdots\subseteq F_{n}\right)$
(i.e., in which the finite subsets are increasing) . The maps are
given as follows: 
\begin{enumerate}
\item For a degeneracy $\xymatrix{[n+1]\ar@{->>}[r]^{d_{i}} & [n]}
$ ($i,i+1\mapsto i$) we have 
\[
\xyR{0.5pc}\xymatrix{\lke s\left(\mathcal{M}^{n+1}\left(\gmap Y_{\ran}\right)\right) & \lke s\left(\mathcal{M}^{n}\left(\gmap Y_{\ran}\right)\right)\ar[l]\sp(0.5){}\\
\left(f;F_{0}\subseteq\ldots\subseteq F_{i}\subseteq F_{i}\subseteq\ldots\subseteq F_{n}\right) & \left(f;F_{0}\subseteq\ldots\subseteq F_{n}\right)\ar@{|->}[l]
}
\xyR{2pc}
\]

\item For a face map $\xymatrix{[n+1] & [n]\ar@{_{(}->}[l]\sb(0.4){s_{i}}}
$ (skip $i\in[n+1]$) we have 
\[
\xyR{0.5pc}\xymatrix{\left(\mathcal{M}^{n+1}\left(\gmap Y_{\ran}\right)\right)_{0}\ar[r]\sp(0.5){} & \left(\mathcal{M}^{n}\left(\gmap Y_{\ran}\right)\right)_{0}\\
(f;F_{0}\subseteq\ldots\subseteq F_{n+1})\ar@{|->}[r] & (f;F_{0}\subseteq\ldots\hat{F_{i}},\ldots\subseteq F_{n+1})
}
\xyR{2pc}
\]
where the hat over $\hat{F_{i}}$ denotes that the i'th term has been
omitted. We point out that, since $F_{i}\subseteq F_{i+1}$ the $i$'th
term in $(f;F_{0},\ldots,\hat{F_{i}},\ldots,F_{n+1})$ is the equal
to $F_{i}\cup F_{i+1}$, which how it should be morally interpreted.\end{enumerate}
\begin{rem}
There is another closely related way of describing the category $\t{Mod}_{\mathcal{M}}$.
The presheaf $\ran$ has the structure of a semi-group in presheaves
of sets, and the category $\t{Mod}_{\mathcal{M}}$ is equivalent to
a certain category of its modules (in $\gpd_{\infty}$). The approach
will be taken up a future note. 
\end{rem}

\subsection{D-module fully-faithfulness\label{sub:Dmod ff}}

In the proposition below we compare two categories of D-modules which
may be constructed from functors of points associated to a given moduli
problem. This result will be used in section \ref{sec:Cont_results}.
\begin{prop}
\label{prop:dmod ff}Let $\mathcal{F}\in\pshv{\dom^{\Gamma}}$. The
map on D-module categories induced by pullback along the adjunction
co-unit 
\[
\dmod\left(\lke s\left(\mathcal{F}\right)\right)\cong\dmod\left(\lke i\circ i_{*}\mathcal{F}\right)\xleftarrow{}\dmod\left(\mathcal{F}\right)
\]
is fully faithful.

Likewise, for $\mathcal{F}'\in\pshv{\dom}$ the functor \textup{
\[
\dmod\left(\lke s\left(p_{*}\mathcal{F}'\right)\right)\cong\dmod\left(\lke i\circ i_{*}\circ p_{*}\mathcal{F}'\right)\xleftarrow{}\dmod\left(\mathcal{F}'\right)
\]
is fully faithful.}
\end{prop}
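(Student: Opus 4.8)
The plan is to deduce both assertions from the monadic Bar resolution of Theorem \ref{thm:ran_dom_equiv}, exploiting two special features of the setup: that $\dmod$ factors through fppf sheafification, and that $\dmod\circ r$ inverts the domain-shrinking morphisms of $\dom^{\Gamma}$. First I would reduce the second assertion to the first: for $\mathcal{F}'\in\pshv{\dom}$ set $\mathcal{F}=p_{*}\mathcal{F}'$, so that Corollary \ref{cor:The-upshot} identifies $\dmod(\mathcal{F}')\xrightarrow{\cong}\dmod(p_{*}\mathcal{F}')$ and the second functor becomes the first applied to $p_{*}\mathcal{F}'$. For the first assertion, since $\dmod$ sends colimits of presheaves to limits in $\catexl$, I would apply it to the Bar resolution $\mathcal{F}\simeq\colim_{[n]\in\Delta^{\t{op}}}\lke i\,\mathcal{M}^{n}(i_{*}\mathcal{F})$ from the proof of Theorem \ref{thm:ran_dom_equiv}. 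Using $\lke r\circ\lke i=\lke s$, this presents $\dmod(\mathcal{F})$ as $\lim_{[n]\in\Delta}\dmod(\lke s\,\mathcal{M}^{n}(i_{*}\mathcal{F}))$, a limit of a cosimplicial object whose $[0]$-term is $\dmod(\lke s\, i_{*}\mathcal{F})=\dmod(\lke i\, i_{*}\mathcal{F})$; under this presentation the counit pullback $u$ is exactly the canonical map from the limit to its $[0]$-term.

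To see that $u$ is fully faithful I would place it inside an adjunction. The structure maps of the simplicial object $\mathcal{M}^{\bullet}(i_{*}\mathcal{F})$, made explicit in \ref{sub:resolution}, are assembled from the diagonal embeddings of the Ran space, which are closed and hence proper; as in Corollary \ref{cor:dmod equiv gmap} this makes each termwise pullback admit a left adjoint, so the adjoint-diagram lemma \ref{lem:adjoint diagrams} supplies a left adjoint $f_{!}$ to $u=f^{!}$ and identifies $\dmod(\mathcal{F})$ with the colimit of the associated $!$-pushforward simplicial diagram. Full-faithfulness of $u$ is then equivalent to the counit $f_{!}f^{!}\to\t{id}$ being an equivalence. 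The conceptual reason this holds is that $\dmod\circ r$ sends every domain-shrinking morphism of $\dom^{\Gamma}$ to an identity, and over each fixed affine $S$ these morphisms form a cofiltered, hence weakly contractible, poset of finite subsets --- exactly the phenomenon underlying Remark \ref{associated ordinary}. This weak contractibility is what collapses the totalization onto its $[0]$-term, yet without collapsing it to an equivalence, and it is what should produce the contracting homotopy witnessing $f_{!}f^{!}\simeq\t{id}$.

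The hard part will be making this last step precise and coherent. The contracting homotopy is morally supplied by the monad unit $\eta$, which exhibits $i_{*}$ of the Bar resolution as a split augmented simplicial object; the difficulty is that this splitting lives after applying the forgetful functor $i_{*}$, one categorical level up, so I must transport it through $\dmod\circ\lke s$ into a genuine simplicial homotopy of D-module categories. Equivalently, I must show that the higher cosimplicial terms impose no further conditions on mapping spaces, i.e. that for $M,N\in\dmod(\mathcal{F})$ the cosimplicial space $\mathrm{Map}(M^{\bullet},N^{\bullet})$ has totalization its $[0]$-term, and this is precisely where the weak contractibility of the domain fibers must be used in earnest. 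A cleaner but equivalent route, avoiding the properness bookkeeping, would be to note that the relevant map $f$ becomes an effective epimorphism after fppf sheafification (Theorem \ref{thm:ran_dom_equiv}), invoke fppf descent for $\dmod$ to write $\dmod(\mathcal{F})$ as the limit over the \v{C}ech nerve of $f$, and then verify that the projections of that nerve have weakly contractible domain fibers; the subtlety here is that effective-epi descent alone only yields comonadicity, so the contractibility of the fibers is exactly what is needed to upgrade comonadicity to full-faithfulness.
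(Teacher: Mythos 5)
Your reduction of the second assertion to the first via Proposition \ref{prop:domran} matches the paper, and your identification of the counit pullback with the projection from the totalization of the Bar resolution to its $[0]$-term is correct. But the core of your argument has a genuine gap: you try to extract full-faithfulness from formal properties of the Bar resolution together with the ``weak contractibility of the poset of finite subsets,'' and this cannot work. The cofiltered poset structure lives in $\dom^{\Gamma}$ and is exactly what proves Proposition \ref{prop:domran} and Lemma \ref{lem:dirty work}; the counit $\lke i\circ i_{*}\mathcal{F}\rightarrow\mathcal{F}$ is precisely the map that \emph{forgets} that poset structure, since $\Ran$ is fibered in discrete sets. What is actually needed is a homological statement about the Ran prestack itself, namely that $\dmod\left(\ran\times S\right)\xleftarrow{\pi_{2}^{!}}\dmod\left(S\right)$ is fully faithful --- Gaitsgory's contractibility theorem \cite[Thm 1.6.5]{DG-Cont}. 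This is the external input your proposal never invokes, and it is not formal: your Bar-resolution manipulations are insensitive to the connectedness of $X$, whereas for disconnected $X$ the Ran space is not homologically contractible and the statement would fail. The paper's proof instead reduces to representables $\mathcal{Y}_{\left(S,F\right)}$ (a step you would also need), exhibits $\lke s\left(\mathcal{Y}_{\left(S,F\right)}\right)$ as a retract-like intermediary between $\ran\times S$ and $S$ via the ``union with $F$'' map, and concludes by a 2-out-of-6 argument from the known full-faithfulness of $\pi_{2}^{!}$.

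Two secondary errors compound this. First, your claim that the structure maps of $\lke s\left(\mathcal{M}^{\bullet}\left(i_{*}\mathcal{F}\right)\right)$ are proper because they are ``assembled from diagonal embeddings'' is wrong: as made explicit in \ref{sub:resolution}, the face maps forget one of the finite subsets $F_{i}$, so they are projection-type maps with Ran-space-sized fibers, not proper; Lemma \ref{lem:adjoint diagrams} cannot be invoked the way Corollary \ref{cor:dmod equiv gmap} does. Second, the splitting of the Bar construction lives after applying $i_{*}$, i.e. in $\pshv{\Ran}$, and since $\lke s\circ i_{*}\neq\lke r$ (the failure of this identity is exactly the content of the proposition), the splitting does not transport to the simplicial object $\lke r$ of the Bar resolution --- so the ``hard part'' you flag is not a technical coherence issue but the place where the argument genuinely breaks.
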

In the proof we use the following general fact: if $C$ is an $\infty$-category,
then equivalences in $C$ satisfy ``2-out-of-6''. I.e., given a
commutative diagram in $C$ 
\[
\xymatrix{a\ar[rr]^{\cong}\ar[dr] &  & c\ar[dr]\\
 & b\ar[rr]_{\cong}\ar[ur] &  & d
}
\]
in which the horizontal morphisms are equivalences, we may conclude
that that all the morphisms are equivalences (the 6th being the composition
$a\rightarrow d$).
\begin{proof}
We start by reducing to the case when $\mathcal{F}$ is in the essential
image if the Yoneda functor $\dom^{\Gamma}\xrightarrow{}\pshv{\dom^{\Gamma}}.$
Denote the Yoneda image of point $\left(S,F\right)\in\dom^{\Gamma}$
by $\mathcal{Y}_{\left(S,F\right)}\in\pshv{\dom^{\Gamma}}$. Present
the presheaf $\mathcal{F}$ as the ``colimit of its points'' i.e.,
\[
\underset{\mathcal{Y}_{\left(S,F\right)}\rightarrow\mathcal{F}}{\colim}\left(\mathcal{Y}_{\left(S,F\right)}\right)\xrightarrow{\cong}\mathcal{F}
\]
Noting that both $\ \lke i$ and $i_{*}$ preserve colimits (since
both admit right adjoints), we also have 
\[
\underset{\mathcal{Y}_{\left(S,F\right)}\rightarrow\mathcal{F}}{\colim}\left(\ \lke i\circ i_{*}\mathcal{Y}_{\left(S,F\right)}\right)\xrightarrow{\cong}\ \lke i\circ i_{*}\mathcal{F}
\]
Consequently, it suffices to show that the functor 
\[
\lim_{\mathcal{Y}_{\left(S,F\right)}\rightarrow\mathcal{F}}\dmod\left(\ \lke i\circ i_{*}\mathcal{Y}_{\left(S,F\right)}\right)\xleftarrow{}\lim_{\mathcal{Y}_{\left(S,F\right)}\rightarrow\mathcal{F}}\dmod\left(\mathcal{Y}_{\left(S,F\right)}\right)
\]
is fully faithful. The latter will follow if we show that for every
$\left(S,F\right)\in\dom^{\Gamma}$ the functor 
\[
\dmod\left(\ \lke i\circ i_{*}\mathcal{Y}_{\left(S,F\right)}\right)\xleftarrow{}\dmod\left(\mathcal{Y}_{\left(S,F\right)}\right)
\]
is fully faithful, or equivalently that 
\[
\dmod\left(\lke s\left(\mathcal{Y}_{\left(S,F\right)}\right)\right)=\dmod\left(\lke{r\circ i}\circ i_{*}\mathcal{Y}_{\left(S,F\right)}\right)\xleftarrow{}\dmod\left(\lke{_{r}}\mathcal{Y}_{\left(S,F\right)}\right)=\dmod\left(S\right)
\]
is fully faithful. 

The latter functor is induced by the map in $\pshv{\aff}$ 
\[
\lke s\left(\mathcal{Y}_{\left(S,F\right)}\right)\rightarrow S
\]
The functor of points $\lke s\left(\mathcal{Y}_{\left(S,F\right)}\right)$
sends a scheme $T$, to the set 
\[
\left\{ \left(\left(T,G\right),T\xrightarrow{f}S\right)\,:\, G\subset\t{Hom}\left(T,X\right)\,\t{finite},\, G\supseteq f^{*}F\right\} 
\]
``Union with $F$'' gives rise to a map 
\[
\xyR{0.5pc}\xymatrix{\ran\times S\ar[r]\sp(0.5){\cup F} & \lke s\left(\mathcal{Y}_{\left(S,F\right)}\right)\\
\left(\left(T,G\right),T\xrightarrow{f}S\right)\ar@{|->}[r] & \left(T\xrightarrow{f}S,G\cup f^{*}F\right)
}
\xyR{2pc}
\]
which fits into the commutative diagram 
\[
\xymatrix{\left(\mathcal{Y}_{\left(S,F\right)}\right)_{0}\ar[rr]^{id}\ar[dr]_{\subseteq} &  & \left(\mathcal{Y}_{\left(S,F\right)}\right)_{0}\ar[dr]^{\rho}\\
 & \ran\times S\ar[rr]_{\pi_{2}}\ar[ur]^{\cup F} &  & S
}
\]
Passing to D-modules, pullback along the bottom map is fully-faithful
by \cite[Thm 1.6.5]{DG-Cont} (or \cite[Prop 4.3.3]{CA}). We conclude
by a ``2-out-of-6'' argument: for every pair $M,N\in\dmod\left(S\right)$,
the maps above give rise to a diagram of $\infty$-groupoids 
\[
\xymatrix{\t{Map}\left(\rho^{!}M,\rho^{!}N\right) &  & \t{Map}\left(\rho^{!}M,\rho^{!}N\right)\ar[dl]\ar[ll]_{=}\\
 & \t{Map}\left(\pi_{2}^{!}M,\pi_{2}^{!}N\right)\ar[ul] &  & \t{Map}\left(M,N\right)\ar[ll]_{\cong}\ar[ul]
}
\]
 By ``2-out-of-6'', for equivalences in $\gpd_{\infty}$, it follows
that $\t{Map}\left(\rho^{!}M,\rho^{!}N\right)\xleftarrow{}\t{Map}\left(M,N\right)$
is an equivalence of $\infty$-groupoids, so that $\dmod\left(i_{*}\mathcal{Y}_{\left(S,F\right)}\right)\xleftarrow{\rho^{!}}\dmod\left(S\right)$
is fully-faithful.

For $\mathcal{F}'\in\pshv{\dom}$, the second assertion now follows
from the fact that the functor 
\[
\dmod\left(p_{*}\left(\mathcal{F}'\right)\right)\xleftarrow{}\dmod\left(\mathcal{F}'\right)
\]
is an equivalence (proposition \ref{prop:domran}).
\end{proof}

\subsection{The proof of proposition \ref{prop:domran}\label{sub:proof of prop domran}}

The following lemma contains the geometric input for the proof of
proposition \ref{prop:domran}:
\begin{lem}
\label{lem:density}The functor $\dom^{\Gamma}\rightarrow\dom$ has
dense image with respect to the fppf topology. I.e., every point of
$\dom$ has a cover by points in the essential image of $\dom^{\Gamma}$.\end{lem}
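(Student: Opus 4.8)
The essential image of $p\colon\dom^{\Gamma}\to\dom$ consists precisely of the \emph{graph complements}: families of the form $(S,U_F)$ with $U_F=(S\times X)\setminus\Gamma_F$ for a finite nonempty $F\subseteq\t{Hom}(S,X)$. The plan is to cover an arbitrary $(S,U)\in\dom$ by such graph complements in two stages. First I reduce, by a Zariski cover, to the case where the complement of $U$ is an effective Cartier divisor; then I split that divisor into a union of graphs after a finite flat cover of the base. Since a Zariski cover is in particular an fppf cover and covers compose, both stages may be carried out in succession.

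For the first stage I would apply Lemma \ref{lem:trivializing cover} to the trivial line bundle on $U$. This produces a finite Zariski cover $\{(S_i,U_i)\to(S,U)\}$ in which each $U_i\subseteq S_i\times X$ is a divisor complement, say $U_i=(S_i\times X)\setminus D_i$ for an effective Cartier divisor $D_i$ with ideal sheaf $\mathcal{O}(-D_i)\hookrightarrow\mathcal{O}_{S_i\times X}$. It therefore suffices to treat a single divisor complement $(S,U=(S\times X)\setminus D)$. Crucially, because $U$ is a family of domains, Lemma \ref{lem:density and flatness} shows that $\mathcal{O}_{D}=\mathcal{O}_{S\times X}/\mathcal{O}(-D)$ is $S$-flat; as $D$ is also finite over $S$ (its fibers are finite, $X$ being a curve), the morphism $D\to S$ is finite flat. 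Its degree is locally constant, so after decomposing $S=\coprod_{d}S^{(d)}$ into the open-closed loci of constant degree $d$ I may assume $D\to S$ is finite flat of a fixed degree $d$.

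For the second stage, a relative effective Cartier divisor of degree $d$ in the smooth curve $X$ is classified by a map $S\to X^{(d)}$ to the $d$-th symmetric power (the Hilbert scheme of $d$ points on $X$, smooth since $X$ is a smooth curve). The quotient map $\pi\colon X^{d}\to X^{(d)}$ is finite and surjective between smooth varieties of equal dimension, hence flat by miracle flatness; thus $\tilde S:=S\times_{X^{(d)}}X^{d}\to S$ is a finite flat surjection, i.e.\ an fppf cover. Over $\tilde S$ the tautological map $\tilde S\to X^{d}$ has $d$ components $g_{1},\dots,g_{d}\colon\tilde S\to X$, and the pullback $D_{\tilde S}$ of $D$ is the divisor $\sum_i\Gamma_{g_i}$; since passing to complements only sees the support, $(\tilde S\times X)\setminus D_{\tilde S}=U_{F}$ for $F=\{g_1,\dots,g_d\}\subseteq\t{Hom}(\tilde S,X)$. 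Thus $(\tilde S,U_{\tilde S})$ is a graph complement, and because $U_{\tilde S}=U\times_S\tilde S$ the map $U_{\tilde S}\to U$ is the base change of the fppf morphism $\tilde S\to S$ and is therefore fppf. This exhibits the required cover on each locus $S^{(d)}$ with $d\geq1$.

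The main obstacle—and the only place something could genuinely go wrong—is the flatness of $D\to S$, which is exactly what the family-of-domains hypothesis buys us through Lemma \ref{lem:density and flatness}; without it $D$ would fail to define a map to $X^{(d)}$ and the splitting trick would collapse. Two minor points remain. On the degree-zero locus $S^{(0)}$ one has $U=S\times X$, which is not itself a graph complement (as $F$ must be nonempty); here I would instead choose two distinct $k$-points $x_0\neq x_1$ of $X$ and cover $S\times X$ by the graph complements of the two constant sections $x_0$ and $x_1$, whose graphs are disjoint. And when some of the $g_i$ coincide the set $F$ simply has fewer than $d$ elements, but it is still nonempty for $d\geq1$, so the argument is unaffected.
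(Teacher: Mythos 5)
Your proof is correct and follows essentially the same route as the paper's: reduce by Lemma \ref{lem:trivializing cover} to the case of a divisor complement, classify the divisor by a map to the symmetric power $X^{(d)}$, and pull back along the finite flat cover $X^{d}\rightarrow X^{(d)}$ to split it into a union of graphs. You are in fact slightly more careful than the paper in two places --- explicitly invoking Lemma \ref{lem:density and flatness} to justify that $D\rightarrow S$ is flat (which is what makes the classifying map to $X^{(d)}$ exist), and handling the degree-zero locus where $U=S\times X$, which the paper's argument silently skips since there $F$ would be empty.
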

\begin{proof}
Let $\left(S,U\right)\in\dom$; we must show that it admits a cover
by points in the essential image of $\dom^{\Gamma}$. 

We may assume that $S$ is connected, and by lemma \ref{lem:trivializing cover}
we may also assume that $U\subseteq S\times X$ is a divisor complement.
Let $\mathcal{L}\rightarrow\mathcal{O}_{S\times X}$ be an effective
Cartier divisor whose complement is $U$. 

Since $S$ is connected, the data of the divisor $\mathcal{L}\rightarrow\mathcal{O}_{S\times X}$
is equivalent to a map $S\rightarrow\mathcal{H}ilb_{X}^{n}$ for some
$n$, where $\mathcal{H}ilb_{X}^{n}$%
\footnote{Since $X$ is a curve, $\mathcal{H}ilb_{X}^{n}\cong X^{\left(n\right)}$,
the $n$'th symmetric power.%
} is the degree $n$ component of the Hilbert scheme of $X$. The standard
map $X^{n}\rightarrow\mathcal{H}ilb_{X}^{n}$ is an fppf cover (it
is faithfully flat and finite of index $n!$). Form the pullback 
\[
\xymatrix{\tilde{S}\ar[r]\ar[d] & X^{n}\ar[d]\\
S\ar[r] & \mathcal{H}ilb_{X}^{n}
}
\]
The components of the top map gives rise to a subset $F\subseteq\t{Hom}\left(\tilde{S},X\right)$,
which in turn determines a point $\left(\tilde{S},F\right)\in\dom^{\Gamma}$.
Observing that $U_{F}=\tilde{S}\times_{S}U$ we get a map $\left(\tilde{S},U_{F}\right)\rightarrow\Big(S,U\Big)$
which is an fppf cover in $\dom$, and whose domain is in the essential
image of $\dom^{\Gamma}$.
\end{proof}
Factor $p$ as 
\[
\dom^{\Gamma}\xrightarrow{p'}\dom^{00}\xrightarrow{j}\dom
\]
 where $\dom^{00}$ is the essential image of $p$ - the full subcategory
of $\dom$ consisting of ``graph complements''. We endow $\dom^{00}$
with the Grothendieck topology pulled back from the fppf topology
on $\dom$. We will prove that $j$ and $p'$ both induce equivalences
on sheaf categories, whence proposition \ref{prop:domran} will follow.

\medskip

Regarding $p^{'}$, informally, the idea is that every fiber of $p^{'}$
is weakly contractible, and that every map in such a fiber is a cover.
Thus, it is reasonable to suspect that $p^{'}$ might be a site equivalence.
The necessary accounting is a little involved, and the relevant site-theoretic
properties of $p^{'}$, which allow the argument to go through, are
embodied in the hypothesis of lemma \textbf{\ref{lem:dirty work}}.
Before stating the lemma, we introduce some notation:
\begin{notation}
\label{nota:slices and fibers}For a category $D$ and an object $d\in D$,
we use $D_{/d}$ to denote the overcategory, and we use $D_{d/}$
to denote the undercategory. We shall denote an object of $D_{/d}$
by $\left(d',d'\xrightarrow{}d\right)$ where $d'$ is an object of
$D$, and $d'\xrightarrow{}d$ is a morphism in $D$ (similarly for
undercategories).

If $C$ is another category and $C\xrightarrow{F}D$ is a functor,
$C_{d}$ denotes the \emph{fiber }of $f$\emph{ }over $d$\emph{ i.e.,
}the fibered product $C\times_{D}\{d\}$ in $\catinf$. We denote
$C_{/d}:=C\times_{D}D_{/d}$, it is a relative overcategory. We denote
an object of this category by the data $\left(c,F\left(c\right)\rightarrow d\right)$
where it is implicitly understood that $c$ is an object in $C$,
and that $F\left(c\right)\rightarrow d$ is a morphism in $D$. Dually,
we denote by $C_{d/}=C\times_{D}D_{d/}$, it is a relative undercategory.
This notation is slightly abusive since obviously these categories
are dependent on the functor $F$, and not only on $C$ and $d$.\end{notation}
\begin{lem}
\label{lem:dirty work}Let $C$ and $D$ be small sites whose underlying
categories admit all finite non-empty limits, and whose Grothendieck
topologies are generated by finite covers. Let $C\xrightarrow{p}D$
be a functor such that:
\begin{enumerate}
\item The Grothendieck topology on $C$ is the pullback of the topology
on D.
\item The functor $p$ is essentially surjective.
\item For every $c\in C$, and for every morphism in $D$, $d\xrightarrow{\tilde{f}}p\left(c\right)$,
there exists a morphism in $C$, $c'\xrightarrow{\tilde{f}}c$, which
lifts%
\footnote{But we do not assume that a Cartesian lift exists.%
} $f$. 
\item $p$ preserves finite limits.
\item For every $d\in D$, the functor $\left(C_{d}\right)^{\t{op}}\rightarrow\left(C_{d/}\right)^{\t{op}}$
is cofinal. 
\item For every $d\in D$, the category $C_{d}$ is a co-filtered poset.
\end{enumerate}
Then the functor 
\[
\shv C{}\xleftarrow{p_{*}}\shv D{}
\]
is an equivalence, and left Kan extension along $p$ is its inverse
(no sheafification necessary).
\end{lem}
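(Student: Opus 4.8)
The plan is to analyze the presheaf-level adjunction $\lke p \dashv p_*$ and show that both functors preserve sheaves and that the unit and counit are equivalences on sheaves; since $\lke p$ will turn out to land in sheaves already, no sheafification will be needed. That $p_*$ preserves sheaves is just the continuity of $p$: by hypotheses (1) and (4) a cover $\{c_i\to c\}$ in $C$ maps to a cover $\{p(c_i)\to p(c)\}$ in $D$ with matching fibre products, so the descent diagram of $p_*\mathcal G$ for $\{c_i\to c\}$ is the $p$-image of the descent diagram of $\mathcal G$ for $\{p(c_i)\to p(c)\}$, which is a limit since $\mathcal G$ is a sheaf (this is \cite[Lemma 2.4.7]{V}). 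Throughout I will use the pointwise colimit formula together with the cofinality hypothesis (5) in the form $\lke p\mathcal F(d)\cong\colim_{(C_d)^{\t{op}}}\mathcal F|_{C_d}$, noting that by (6) the indexing poset $(C_d)^{\t{op}}$ is filtered, hence weakly contractible.

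The heart of the argument, and the step I expect to be the main obstacle, is the following observation about a sheaf $\mathcal F\in\shv C{}$: every morphism $c'\to c''$ inside a fibre $C_d$ induces an equivalence $\mathcal F(c'')\xrightarrow{\cong}\mathcal F(c')$. Indeed such a morphism lies over $\t{id}_d$, so by (1) the one-element family $\{c'\to c''\}$ is a cover. Because $p$ preserves finite limits (4), the fibre product $c'\times_{c''}c'$ lies over $d\times_d d=d$, i.e. again in $C_d$; but $C_d$ is a poset (6), so the two projections $c'\times_{c''}c'\rightrightarrows c'$ coincide, whence by the universal property the fibre product is $c'$ itself and the whole \v{C}ech nerve is constant. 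Sheaf descent for $\{c'\to c''\}$ therefore collapses to the claimed equivalence. Consequently $\mathcal F|_{C_d}$ has all transition maps invertible over the weakly contractible $(C_d)^{\t{op}}$, so $\lke p\mathcal F(d)\cong\mathcal F(c)$ for any lift $c$ of $d$ (which exists by (2)); this is an \emph{evaluate-at-a-lift} formula valid for every sheaf. Tracing the unit through this identification shows $\mathcal F\to p_*\lke p\mathcal F$ is an equivalence for every $\mathcal F\in\shv C{}$.

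Next I would prove that $\lke p$ preserves sheaves, using the evaluate-at-a-lift formula. Given a finite cover $\{d_i\to d\}$ in $D$, pick a lift $c\in C_d$ by (2), and by (3) lift each $d_i\to d=p(c)$ to a morphism $c_i\to c$ with $p(c_i)=d_i$; by (1) the family $\{c_i\to c\}$ is then a cover of $c$ in $C$. Using (4), each term $c_{i_0}\times_c\cdots\times_c c_{i_n}$ of its \v{C}ech nerve is a lift of $d_{i_0}\times_d\cdots\times_d d_{i_n}$, so the evaluate-at-a-lift formula identifies the descent object of $\lke p\mathcal F$ for $\{d_i\to d\}$ term by term with the descent object of $\mathcal F$ for $\{c_i\to c\}$; since $\mathcal F$ is a sheaf the latter limit is $\mathcal F(c)\cong\lke p\mathcal F(d)$, so $\lke p\mathcal F$ satisfies descent. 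Note that nothing is commuted past the cosimplicial totalization here: the two limit diagrams are matched directly, which sidesteps the fact that filtered colimits need not commute with $\Delta$-indexed limits.

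Finally the counit $\lke p\,p_*\mathcal G\to\mathcal G$ is an equivalence for every presheaf $\mathcal G$ on $D$: by (5) and the colimit formula, $\lke p\,p_*\mathcal G(d)\cong\colim_{(C_d)^{\t{op}}}\mathcal G(p(-))$, and for $c\in C_d$ one has $\mathcal G(p(c))\cong\mathcal G(d)$ with identity transition maps, so the colimit of this essentially constant diagram over the weakly contractible $(C_d)^{\t{op}}$ is $\mathcal G(d)$, realized by the counit. Assembling the four steps, $\lke p$ and $p_*$ restrict to mutually inverse equivalences $\shv C{}\simeq\shv D{}$, with $\lke p$ already valued in sheaves so that no sheafification is required. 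The one delicate point is the fibrewise equivalence of the second paragraph; granting it, everything else is a formal manipulation of the pointwise Kan-extension formula together with the lifting hypotheses (2) and (3).
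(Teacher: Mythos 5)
Your proof is correct and follows essentially the same route as the paper's: the same key observation that a sheaf on $C$ is constant on the fibres $C_d$ (because morphisms over $\mathrm{id}_d$ are covers with constant \v{C}ech nerves by (1), (4) and (6)), the same use of (5) and weak contractibility to evaluate $\mathrm{LKE}_p$ at a lift, and the same lifting of covers via (2), (3) to show $\mathrm{LKE}_p$ preserves sheaves. The only cosmetic difference is that you verify the counit directly on all presheaves, whereas the paper deduces the second triangle from conservativity of $p_*$.
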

In \ref{proof: prop:domran} we will show that $\dom^{\Gamma}\xrightarrow{p^{'}}\dom^{00}$
satisfies the hypothesis of this lemma.
\begin{proof}
We will show that the left Kan extension 
\[
\shv C{}\xrightarrow{\lke p}\pshv D
\]
lands in sheaves, and prove that the resulting adjoint functors $\left(\lke p,p_{*}\right)$
\[
\xymatrix{\shv C{}\ar@<1ex>[r]^{\lke p} & \shv D{}\ar@<1ex>[l]^{p_{*}}}
\]
are mutually inverse equivalences. 

The following is the key observation: Let $\mathcal{G}\in\shv C{}$,
and let $d\in D$. Then $\mathcal{G}$ is constant on the fiber $C_{d}$.
First we point out that (4) implies that $C_{d}$ admits all finite
non-empty limits, which may be computed in $C$. Let $c'\xrightarrow{f}c$
be a morphism in $C_{d}$; it is a cover by (1). The value of $\mathcal{G}$
at $c$ may be computed using the \v{C}ech complex of $f$. However,
(6) implies that this Cech complex is the constant simplicial object
with value $c^{'}$, since $c'\times_{c}c'=c'$ because $C_{d}$ is
a poset. It follows that $\mathcal{G}\left(c'\right)\xleftarrow{}\mathcal{G}\left(c\right)$
is an equivalence. Since $C_{d}$ it is weakly contractible (being
a co-filtered poset), the observation follows.

Let $\mathcal{G}\in\shv C{}$, and let us show that the co-unit transformation
(a-priori in $\pshv C$) 
\[
p_{*}\circ\lke p\mathcal{G}\rightarrow\mathcal{G}
\]
 is an equivalence. Fix $c\in C$, and let us prove that the map$ $
\[
p_{*}\circ\lke p\mathcal{G}\left(c\right)\rightarrow\mathcal{G}\left(c\right)
\]
 is an equivalence of groupoids. We compute 
\[
p{}_{*}\lke p\mathcal{G}\left(c\right)=\lke p\mathcal{G}\left(p\left(c\right)\right)=\colim\left(\left(C_{p\left(c\right)/}\right)^{\t{op}}\xrightarrow{\mathcal{G}}\gpd_{\infty}\right)\cong
\]
Since $\left(C_{p\left(c\right)}\right)^{\t{op}}\rightarrow\left(C_{p\left(c\right)/}\right)^{\t{op}}$
is co-final by (5), \textbf{
\[
\cong\colim\left(\left(C_{p\left(c\right)}\right)^{\t{op}}\xrightarrow{\mathcal{G}}\gpd_{\infty}\right)\cong
\]
}Because $\mathcal{G}$ is constant on the fibers, and these fibers
are weakly contractible we conclude 
\[
\cong\mathcal{G}\left(c\right)
\]

Next we show that for every $\mathcal{G}\in\shv C{}$, the presheaf
$\lke p\mathcal{G}$ is in fact a sheaf. Let $d\in D$, and let $\left\{ d_{i}\rightarrow d\right\} _{i=1}^{k}$
be a cover in $D$. Let $c\in C$ be such that $p\left(c\right)=d$,
and let $\left\{ c_{i}\xrightarrow{\tilde{f}_{i}}c\right\} _{i=1}^{k}$
be a lift of the $f_{i}$'s; it is a cover of $c$ by (1). For every
$n$-tuple of indexes in $\left\{ 1,\ldots,k\right\} $, $\overline{i}$,
we let $c_{\overline{i}}$ and $d_{\overline{i}}$ denote the corresponding
$n$-fold fibered products over $c$ and $d$, and we note that $p\left(c_{\overline{i}}\right)\cong d_{\overline{i}}$
by (4). Consequently, forming the \v{C}ech covers associated with
the covers, we obtain a commutative square 
\[
\xymatrix{{\displaystyle \lim_{[n]\in\Delta^{\t{op}}}\left(\coprod_{|\overline{i}|=n}\mathcal{G}\left(c_{\overline{i}}\right)\right)}\ar[d]^{\cong} & \mathcal{G}\left(c\right)\ar[d]^{\cong}\ar[l]\sb(0.35){\cong}\\
{\displaystyle \lim_{[n]\in\Delta^{\t{op}}}\left(\coprod_{|\overline{i}|=n}\lke p\mathcal{G}\left(d_{\overline{i}}\right)\right)} & \lke p\mathcal{G}\left(d\right)\ar[l]
}
\]
in which the vertical maps are equivalences by computation above,
and the top map is an equivalence because $\mathcal{G}$ is a sheaf.
We conclude that the bottom map is an equivalence for every cover
of $d$, thus $\lke p\mathcal{G}$ is a sheaf. 

We complete the proof of the lemma by observing that we have exhibited
adjoint functors 
\[
\xymatrix{\shv C{}\ar@<1ex>[r]^{\lke p} & \shv D{}\ar@<1ex>[l]^{p_{*}}}
\]
for which the co-unit transformation is an equivalence. In addition,
Since $p$ is essentially surjective, $p_{*}$ is conservative, whence
we conclude that the unit transformation is also an natural equivalence.
The equivalence of sheaf categories follows.
\end{proof}

\subsubsection{Proof of proposition \ref{prop:domran}\label{proof: prop:domran}}

Below, all sites are endowed with their (respective) fppf Grothendieck
topologies, and we suppress the topology in the notation. E.g., $\shv{\dom}{}:=\shv{\dom;\t{fppf}}{}$
etc.. 

Recall the factorization 
\[
\dom^{\Gamma}\xrightarrow{p'}\dom^{00}\xrightarrow{j}\dom
\]
We endow $\dom^{00}$ with the Grothendieck topology pulled back from
the fppf topology on $\dom$. We treat $p^{'}$ and $j$ separately.
\begin{enumerate}
\item We prove that $\shv{\dom^{00}}{}\xleftarrow{j_{*}}\shv{\dom}{}$ is
an equivalence by showing that it satisfies the hypothesis of a general
criterion for the inclusion of a sub-site to induce an equivalence
on sheaf categories (often referred to as the ``comparison lemma'').
A statement and proof of this criterion is included in the appendix
(lemma \ref{Comparison lemma}).

The functor $j$ has dense image by lemma \ref{lem:density}. The
category $\dom$ admits all finite limits. In particular, fibered
products in $\dom$ are given by squares of the form 
\[
\xymatrix{\left(R\times_{T}S,U\times_{T}W\right)\ar[r]\ar[d] & \left(R,W\right)\ar[d]^{g}\\
\left(S,U\right)\ar[r]^{f} & \left(T,V\right)
}
\]
 Whence it is evident whenever $U\subseteq S\times X$ and $W\subseteq T\times X$
are graph complements (i.e., present points in $\dom^{00}$), then
so is 
\[
U\times_{T}W\subseteq R\times_{S}T\times X
\]
whence it follows that $\left(R\times_{T}S,U\times_{T}W\right)\in\dom^{00}$.
These are precisely the hypothesis of the comparison lemma (\ref{Comparison lemma}),
and we conclude that $j_{*}$ is an equivalence of sheaf categories. 

\item We prove that $\shv{\dom^{\Gamma}}{}\xleftarrow{p_{*}^{'}}\shv{\dom^{00}}{}$
is an equivalence by showing that the functor $p^{'}$ satisfies the
hypothesis of lemma \ref{lem:dirty work}. Aside from (5), which we
will show, the rest of the hypothesis are immediate.

Fix $ $$\left(S,U\right)\in\dom^{00}$. In order to prove that 
\[
\left(\dom^{\Gamma}\right)_{\left(S,U\right)}^{\t{op}}\rightarrow\left(\left(\dom^{\Gamma}\right)_{\left(S,U\right)/}\right)^{\t{op}}
\]
is cofinal, it suffices to show that for every point $Q\in\left(\left(\dom^{\Gamma}\right)_{\left(S,U\right)/}\right)^{\t{op}}$
we have that the category 
\[
\left(\left(\dom^{\Gamma}\right)_{\left(S,U\right)}^{\t{op}}\right)_{Q/}
\]
is weakly contractible. Or equivalently, that its opposite category
\begin{equation}
\left(\left(\dom^{\Gamma}\right)_{\left(S,U\right)}\right)_{/Q}\label{eq:slice cat}
\end{equation}
is weakly contractible. The object $Q$ is presented by the data of
\[
\xymatrix{ & \left(T,G\right)\in\dom^{\Gamma}\ar@{|->}@<-4.5ex>[d]^{p^{'}}\\
\left(S,U\right)\ar[r]\sp(0.35){f:S\rightarrow T} & \left(T,U_{G}\right)\in\dom^{00}
}
\]
and the category \ref{eq:slice cat} classifies all the ways of lifting
$f$ to a ``commutative'' square 
\[
\xymatrix{\left(S,F\right)\ar[r]\ar@{|->}[d]^{p^{'}} & \left(T,G\right)\in\dom^{\Gamma}\ar@{|->}@<-4.5ex>[d]^{p^{'}}\\
\left(S,U\right)\ar[r]\sp(0.35){f:S\rightarrow T} & \left(T,U_{G}\right)\in\dom^{00}
}
\]
It is equivalent to the category of all finite subsets $F\subseteq\t{Hom}\left(S,X\right)$
whose associated open subscheme is $U$, and which contain $\left\{ g\circ f:T\rightarrow X\,:\, g\in G\right\} $,
with morphisms being the opposite of inclusion. This category is non-empty,
because the assumption that $\left(S,U\right)\in\dom^{00}$ implies
that it is the image of some $\left(S,F'\right)$, and then $\left(S,F'\cup f^{*}G\right)$
completes the square. It also admits finite products (given by the
union of $F$'s), thus is weakly contractible by \cite[lemma 2.4.6]{V}.\qed\end{enumerate}

\specialsection{\label{sec:Cont_results}Some ``Homological Contractibility'' Results}

In this section we present a few results which relate the D-module
categories associated to different moduli spaces of the kind we have
been considering. Namely, we prove that certain maps between the spaces
induce, via pullback, fully-faithful functors on D-module categories.
These results are of interest to the geometric Langlands program,
because the D-module categories involved are the counterparts, in
the geometric setting, of function spaces that appear on the automorphic
side of the correspondence in the classical setting. 

Fully faithfulness of D-module pullback has implications for classical%
\footnote{In contrast with ``higher'' invariants, such as D-module categories.%
}, invariants such as homology groups, and we start by pointing these
out in subsection \ref{sub:homology of a functor}. 

We emphasize the difference between the results we will discuss below,
and those discussed in subsection \ref{sub:Dmod ff}. Previously we
compared the D-module categories associated with different functor-of-points
formulations of the same moduli problem. Below we will compare D-module
categories associated to different moduli problems.

\subsection{The homology of a functor of points\label{sub:homology of a functor}}

In this subsection we define the homology groups of an arbitrary functor
of points, and relate this classical invariant to the higher invariant
$\dmod$.

\subsubsection{Motivation}

To every scheme $S$, of finite type over $\mathbb{C}$, we may associate
its analytic topological space, $S^{\t{an}}$. By the \emph{homology}
of the scheme $S$, we mean the topological (singular) homology of
$S^{\t{an}}$ with coefficients in $\mathbb{C}$. 

Let $\mathcal{F}\in\pshv{\aff}$ be any functor of points over $\mathbb{C}$.
We define the homotopy type of $\mathcal{F}$ to be the homotopy colimit,
over all the points of $\mathcal{F}$ 
\[
\t{type}\left(\mathcal{F}\right):=\underset{S\rightarrow\mathcal{F}}{\t{hocolim}}\left(S^{\t{an}}\right)
\]
It is the homology groups of this homotopy type which we are after
(when over $\mathbb{C}$). The point of the circuitous definition
for the homology of $\mathcal{F}$ given below, is to have it presented
in terms of D-module categories. In proposition \ref{prop:functor homology}\textbf{
}we prove that (over $\mathbb{C}$) both notions of homology agree. 
\begin{notation}
For a functor of points, $\mathcal{F}\in\pshv{\aff},$ and a pair
of D-modules $M,N\in\dmod\left(\mathcal{F}\right)$ we denote the
mapping space (an $\infty$-groupoid) by 
\[
\t{Map}_{\mathcal{F}}\left(M,N\right):=\t{Map}_{\dmod\left(\mathcal{F}\right)}\left(M,N\right)
\]

\end{notation}

\subsubsection{{}}

Let $\mathcal{F}\in\pshv{\aff}$ be an arbitrary functor of points,
and let 
\[
\mathcal{F}\xrightarrow{t}\spec\left(k\right)=:\t{pt}
\]
denote the map to the terminal object. We denote by $\mathbf{Vect}$
the stable $\infty$-category of chain complexes of vector spaces
over $k$, mod quasi-isomorphism (whose homotopy category is equivalent
to the derived category of the the ordinary category of $k$-vector
spaces). We shall identify $\dmod\left(\spec\left(k\right)\right)=\dmod\left(\t{pt}\right)\cong\vect$. 

A left adjoint, $t_{!}$, to the pullback functor $\dmod\left(\mathcal{F}\right)\xleftarrow{t^{!}}\vect$,
may not be globally defined, but nonetheless makes sense as a partial
functor, defined on the full subcategory of those $\mathcal{G}\in\dmod\left(\mathcal{F}\right)$
for which the functor 
\begin{equation}
\xyR{0.5pc}\xymatrix{\vect\ar[r] & \gpd_{\infty}\\
V\ar@{|->}[r] & \t{Map}_{\mathcal{F}}\left(\mathcal{G},t^{!}V\right)
}
\xyR{2pc}\label{eq:partial t!}
\end{equation}
is co-representable. For such $\mathcal{G}$, the object $t_{!}\mathcal{G}$
is such a co-representing object in $\vect$. 
\begin{defn}
The \emph{canonical sheaf} of a functor of points, $\mathcal{F}\in\pshv{\aff}$,
is 
\[
\omega_{\mathcal{F}}:=t^{!}k
\]
\end{defn}
\begin{lem}
\label{lem:homology well defined}Let $\mathcal{F}\in\pshv{\aff}$.
The partial functor $t_{!}$ is defined on $\omega_{\mathcal{F}}$.\end{lem}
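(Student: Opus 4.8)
The plan is to reduce corepresentability of the defining functor (\ref{eq:partial t!}) at $\omega_{\mathcal{F}}$ to a pointwise statement over the category of points $\aff_{/\mathcal{F}}$. Since $\dmod$ on $\pshv{\aff}$ is the right Kan extension of $\affop\xrightarrow{\dmod}\catexl$ along the Yoneda embedding, it is computed by $\dmod(\mathcal{F})\cong\lim_{S\to\mathcal{F}}\dmod(S)$ (cf. Remark \ref{limit presentation}). As mapping spaces in a limit of $\infty$-categories are the limits of the mapping spaces, and as $\omega_{\mathcal{F}}$ and $t^{!}V$ restrict along a point $S\to\mathcal{F}$ to $\omega_{S}:=p_{S}^{!}k$ and $p_{S}^{!}V$ (where $p_{S}\colon S\to\t{pt}$ is the structure map), I would first record the identification
\[
\t{Map}_{\mathcal{F}}(\omega_{\mathcal{F}},t^{!}V)\cong\lim_{S\to\mathcal{F}}\t{Map}_{\dmod(S)}(\omega_{S},p_{S}^{!}V).
\]

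Next I would compute each term. Writing $p_{S}^{!}V\cong\omega_{S}\otimes_{k}V$ and using that $\omega_{S}$ is a compact (coherent) object of $\dmod(S)$, the enriched mapping functor $\underline{\t{Map}}_{S}(\omega_{S},-)$ is $\vect$-linear and preserves colimits, hence commutes with the copower $\otimes_{k}V$, giving
\[
\underline{\t{Map}}_{S}(\omega_{S},p_{S}^{!}V)\cong A_{S}\otimes_{k}V,\qquad A_{S}:=\underline{\t{Map}}_{S}(\omega_{S},\omega_{S}).
\]
Writing $\omega_{S}\cong\mathbb{D}_{S}(k_{S})$ for the Verdier dual of the constant D-module $k_{S}=p_{S}^{*}k$ and invoking the unconditional $(p_{S}^{*},p_{S*})$-adjunction, I would identify $A_{S}\cong p_{S*}p_{S}^{*}k\cong R\Gamma_{\t{dR}}(S)$, the algebraic de Rham cohomology of $S$.

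The step carrying the real content, and the one I expect to be the main obstacle, is pointwise corepresentability: for a finite type scheme $S$ over a field of characteristic $0$ the total cohomology of $R\Gamma_{\t{dR}}(S)$ is finite-dimensional, so $A_{S}$ is a dualizable object of $\vect$. Dualizability turns $V\mapsto A_{S}\otimes_{k}V$ into the corepresentable functor $\t{Map}_{\vect}(A_{S}^{\vee},-)$; thus the partial left adjoint is defined at $\omega_{S}$, with $p_{S!}\omega_{S}\cong A_{S}^{\vee}\cong R\Gamma_{\t{dR}}(S)^{\vee}$, the homology of $S$. Everything past this finiteness input is formal.

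Finally I would assemble the pointwise corepresentations. A morphism $S\to S'$ over $\mathcal{F}$ induces, by pullback on de Rham cohomology, a map $A_{S'}\to A_{S}$, hence $A_{S}^{\vee}\to A_{S'}^{\vee}$ on duals, so $S\mapsto A_{S}^{\vee}$ is a diagram over the essentially small category $\aff_{/\mathcal{F}}$. Since each $A_{S}^{\vee}$ is dualizable,
\[
\lim_{S\to\mathcal{F}}\t{Map}_{\vect}(A_{S}^{\vee},V)\cong\t{Map}_{\vect}\Bigl(\colim_{S\to\mathcal{F}}A_{S}^{\vee},\,V\Bigr),
\]
and the colimit exists because $\vect$ is cocomplete. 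Hence the functor (\ref{eq:partial t!}) is corepresented at $\omega_{\mathcal{F}}$, with $t_{!}(\omega_{\mathcal{F}})\cong\colim_{S\to\mathcal{F}}R\Gamma_{\t{dR}}(S)^{\vee}$ --- precisely the homology of $\mathcal{F}$ --- which is what we had to prove.
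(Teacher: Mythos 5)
Your proof is correct, and its skeleton coincides with the paper's: both pass to the limit presentation $\dmod\left(\mathcal{F}\right)\cong\lim_{S\rightarrow\mathcal{F}}\dmod\left(S\right)$, establish corepresentability point by point over the category of points of $\mathcal{F}$, and assemble $t_{!}\omega_{\mathcal{F}}$ as $\underset{S\rightarrow\mathcal{F}}{\colim}\left(t_{!}\omega_{S}\right)$. Where you genuinely diverge is in the pointwise step. The paper disposes of it in one sentence: $t_{!}\omega_{S}$ exists because $\omega_{S}$ is bounded holonomic, i.e., it quotes the existence of the $!$-pushforward on holonomic complexes as a black box and never identifies $\t{Map}_{S}\left(\omega_{S},t^{!}V\right)$ explicitly. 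You instead compute the corepresenting object by hand: compactness of $\omega_{S}$ gives $\underline{\t{Map}}_{S}\left(\omega_{S},p_{S}^{!}V\right)\cong A_{S}\otimes_{k}V$, you identify $A_{S}$ with $R\Gamma_{\t{dR}}\left(S\right)$, and you use finite-dimensionality of algebraic de Rham cohomology in characteristic $0$ to turn the copower into the corepresentable functor $\t{Map}_{\vect}\left(A_{S}^{\vee},-\right)$. The two finiteness inputs (holonomicity of $\omega_{S}$ versus perfection of $R\Gamma_{\t{dR}}\left(S\right)$) are two faces of the same theorem, so neither argument is more elementary at bottom; but your route buys an explicit identification $t_{!}\omega_{S}\cong R\Gamma_{\t{dR}}\left(S\right)^{\vee}$, which makes the comparison with Proposition \ref{prop:functor homology} transparent. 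The only soft spot is the appeal to an ``unconditional $\left(p_{S}^{*},p_{S*}\right)$-adjunction'' on a possibly singular $S$, where the $*$-pullback of D-modules is only partially defined; this is harmless for the constant object $k$, but it would be cleaner to compute $\underline{\t{End}}_{S}\left(\omega_{S}\right)$ via the duality anti-equivalence on coherent objects. Also note that your final step, passing the limit of mapping spaces through the colimit of the $A_{S}^{\vee}$, needs no dualizability at all --- it is just the universal property of the colimit, exactly as in the paper.
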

\begin{proof}
Define an object of $\mathbf{Vect}$ 
\[
H:=\underset{S\xrightarrow{s}\mathcal{F}}{\colim}\left(t_{!}\omega_{S}\right)
\]
where the index diagram is the category of points of $\mathcal{F}$
(so each $S$ is an affine scheme). We remark that $t_{!}\omega_{S}\in\mathbf{Vect}$
is well-defined because $\omega_{S}$ is bounded holonomic.

We show that $H$ co-represents the functor \ref{eq:partial t!}.
Indeed 
\[
\begin{array}{c}
\t{Map}_{\mathcal{F}}\left(\omega_{\mathcal{F}},t^{!}V\right)\cong{\displaystyle \lim_{S\xrightarrow{s}\mathcal{F}}}\t{Map}_{S}\left(\omega_{S},t^{!}V\right)\cong{\displaystyle \lim_{S\xrightarrow{s}\mathcal{F}}}\t{Map}_{\t{pt}}\left(t_{!}\omega_{S},V\right)\cong\\
\hfill\t{Map}_{\t{pt}}\left(\underset{S\xrightarrow{s}\mathcal{F}}{\colim}\left(t_{!}\omega_{S}\right),V\right)=\t{Map}_{\t{pt}}\left(H,V\right)
\end{array}
\]
 we conclude that $t_{!}\omega_{\mathcal{F}}$ is defined. \end{proof}
\begin{defn}
We define the \emph{homology} of $\mathcal{F}$ to be
\[
\t H_{\bullet}\left(\mathcal{F};k\right):=t_{!}\omega_{\mathcal{F}}\in\mathbf{Vect}
\]

\end{defn}
\noindent It follows from the proof of lemma \ref{lem:homology well defined},
that 
\[
\t H_{\bullet}\left(\mathcal{F};k\right)\cong\underset{S\xrightarrow{s}\mathcal{F}}{\colim}\left(t_{!}\omega_{S}\right)=\underset{S\xrightarrow{s}\mathcal{F}}{\colim}\t H_{\bullet}\left(S;k\right)
\]

\medskip

\noindent The following well known proposition justifies our use
of the word 'homology' (we include a proof for completeness).
\begin{prop}
\label{prop:functor homology}Assume $k=\mathbb{C}$, and let $\mathcal{F}\in\pshv{\aff}$.
Then 
\[
\t{H_{\bullet}\left(\mathcal{F};\mathbb{C}\right)}\cong\t{H_{\bullet}^{top}\left(\t{type}\left(\mathcal{F}\right);\mathbb{C}\right)}
\]
where $\t H_{\bullet}^{\t{top}}$ denotes topological homology.\end{prop}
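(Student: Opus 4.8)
The plan is to reduce the statement to the case of a single affine scheme, where it becomes the Riemann--Hilbert correspondence, after observing that both sides are presented as the \emph{same} colimit over the category of points $\left(\aff\right)_{/\mathcal{F}}$ of $\mathcal{F}$. On the D-module side this is already available: as recorded immediately after the proof of Lemma~\ref{lem:homology well defined}, one has in $\vect$
\[
\t{H}_\bullet\left(\mathcal{F};k\right) = t_!\omega_{\mathcal{F}} \cong \underset{S\to\mathcal{F}}{\colim}\, t_!\omega_S = \underset{S\to\mathcal{F}}{\colim}\, \t{H}_\bullet\left(S;k\right),
\]
the colimit being taken over $\left(\aff\right)_{/\mathcal{F}}$.

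For the topological side I would use that the singular chains functor $C_\bullet(-;\mathbb{C})\colon \gpd_\infty \to \vect$ (sending a homotopy type $K$ to its $\mathbb{C}$-linearization $\mathbb{C}\otimes\Sigma^\infty_+ K$) preserves colimits, being a left adjoint. Since $\t{type}(\mathcal{F})$ is by definition the homotopy colimit $\underset{S\to\mathcal{F}}{\t{hocolim}}\, S^{\t{an}}$ over the very same indexing category, this yields in $\vect$
\[
C_\bullet\!\left(\t{type}(\mathcal{F});\mathbb{C}\right) \cong \underset{S\to\mathcal{F}}{\colim}\, C_\bullet\left(S^{\t{an}};\mathbb{C}\right),
\]
so that $\t{H}^{\t{top}}_\bullet(\t{type}(\mathcal{F});\mathbb{C})$ is computed by the colimit of the chain complexes $C_\bullet(S^{\t{an}};\mathbb{C})$. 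Working at the level of objects of $\vect$, rather than individual homology groups, is what makes this clean, since the homotopy colimit of chain complexes carries no $\lim^1$-type obstruction.

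It therefore suffices to produce, for each affine $S$ of finite type over $\mathbb{C}$, an equivalence in $\vect$
\[
t_!\omega_S \cong C_\bullet\left(S^{\t{an}};\mathbb{C}\right)
\]
that is natural in $S$ for the covariant ($t_!$) functoriality. This is the Riemann--Hilbert correspondence: under the equivalence between holonomic D-modules on $S$ and constructible $\mathbb{C}$-sheaves on $S^{\t{an}}$, the dualizing D-module $\omega_S = t^!\mathbb{C}$ goes to the topological dualizing complex $t^!\underline{\mathbb{C}}$, and the D-module functor $t_!$ matches the topological proper pushforward $t_!$. Hence $t_!\omega_S = t_! t^!\mathbb{C}$ corresponds to $t_! t^!\underline{\mathbb{C}}$, which is the standard sheaf-theoretic expression for the singular homology $C_\bullet(S^{\t{an}};\mathbb{C})$.

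The hard part will be this single-scheme comparison together with its naturality: everything preceding it is formal, since both sides are literally colimits over $\left(\aff\right)_{/\mathcal{F}}$. The essential inputs are the compatibility of the six-functor formalism for D-modules with that for constructible sheaves under Riemann--Hilbert---specifically for the pair $(t_!, t^!)$---and the classical identification $t_! t^!\underline{\mathbb{C}} \simeq C_\bullet(S^{\t{an}};\mathbb{C})$. One must be careful to check naturality with respect to the maps $t_!\omega_S \to t_!\omega_{S'}$ induced by the counit $f_! f^! \to \t{id}$ of a morphism $f\colon S\to S'$ over $\mathcal{F}$, matched with $C_\bullet(S^{\t{an}}) \to C_\bullet((S')^{\t{an}})$, since it is precisely this covariant functoriality, and not pullback, that governs the colimit presentation of homology on both sides.
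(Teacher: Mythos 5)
Your proposal is correct and follows essentially the same route as the paper: reduce to the case of a single affine scheme by observing that both sides are colimit-preserving (left Kan extensions from $\aff$), then invoke the Riemann--Hilbert correspondence to identify $t_!\omega_S=t_!t^!\mathbb{C}$ with its constructible counterpart. The only cosmetic difference is in the final identification of $t_!^{c}t_c^{!}\underline{\mathbb{C}}$ with singular homology, which you cite as standard while the paper derives it via Verdier duality and the universal coefficient theorem; your explicit attention to naturality in $S$ is a point the paper leaves implicit.
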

\begin{proof}
Since both homology theories are the left Kan extensions from affine
schemes (equivalently, they are colimit preserving), it suffices to
consider the case when $\mathcal{F}$ is representable by an affine
scheme $S$. 

For an affine scheme $S$, $\omega_{S}$ is a bounded holonomic complex,
and using the Riemann-Hilbert correspondence we obtain an equivalence
\[
\t H_{\bullet}\left(S;\mathbb{C}\right)=t_{!}t^{!}\mathbb{C}_{\t{pt}}\cong t_{!}^{c}t_{c}^{!}\mathbb{C}_{\t{pt}}\cong
\]
where $t_{c}^{!}$ and $t_{!}^{c}$ denote the $!$-functors on the
(derived) category of constructible sheaves of vector spaces on $S^{\t{an}}$.
Denote the duality functor on constructible sheaves by $\mathbb{D}$,
and topological co-homology by $\t H_{\t{top}}^{\bullet}$. By Verdier
duality we have an equivalence 
\[
\cong t_{!}^{c}t_{c}^{!}\mathbb{D}\mathbb{C}_{\t{pt}}\cong\mathbb{D}t_{*}^{c}t_{c}^{*}\mathbb{C}_{\t{pt}}=\mathbb{D}\left(\t H_{\t{top}}^{\bullet}\left(S^{\t{an}};\mathbb{C}\right)\right)\cong
\]
Using the universal coefficient theorem (and that $S^{\t{an}}$ has
finite dimensional co-homologies) we conclude 
\[
\cong\t H_{\bullet}^{\t{top}}\left(S^{\t{an}};\mathbb{C}\right)
\]
as claimed.\end{proof}
\begin{rem}
\label{rem:homology isom}If a map between functors of points $\mathcal{F}\xrightarrow{f}\mathcal{G}$
induces a fully faithful pullback functor on 
\[
\dmod\left(\mathcal{F}\right)\xleftarrow[\supseteq]{f^{!}}\dmod\left(\mathcal{G}\right)
\]
then 
\[
\t H_{\bullet}\left(\mathcal{F};k\right)\cong\t H_{\bullet}\left(\mathcal{G};k\right)
\]
since 
\[
\t{Map}_{\t{pt}}\left(t_{!}\omega_{\mathcal{F}},k\right)\cong\t{Map}_{\mathcal{F}}\left(\omega_{\mathcal{F}},\omega_{\mathcal{F}}\right)\xleftarrow[\cong]{f^{!}}\t{Map}_{\mathcal{G}}\left(\omega_{\mathcal{G}},\omega_{\mathcal{G}}\right)\cong\t{Map}_{\t{pt}}\left(t_{!}\omega_{\mathcal{G}},k\right)
\]
In the particular case of $\mathcal{F}\xrightarrow{t}\spec\left(k\right)$,
the fully faithfulness of $t^{!}$ is equivalent to $\t H_{\bullet}\left(\mathcal{F};k\right)\cong t_{!}t!\left(k_{\t{pt}}\right)\rightarrow k_{\t{pt}}$
being an equivalence.
\end{rem}
{}

\bigskip

In a certain sense the following remark summarizes the essential thesis
of this paper:
\begin{rem}
For each concrete moduli problem of generic data we have introduced
numerous candidates for a functor of points presenting a moduli space
- with or without domain data, allowing general domains or only graph-complements,
presheaves over $\ran$ and modules for $\mathcal{M}$ therein, and
the compactification constructions of sections 3 and 4. The underlying
theme of this paper is, that in many ways, these different choices
are all equivalent. E.g., a main application of propositions \ref{cor:The-upshot},
\ref{prop:dmod ff}, \ref{prop:quasimap equiv}, and \ref{prop:B_bar points equiv},
about fully-faithfulness of D-module pullback, is that the homology
groups of all these functors of points are isomorphic.
\end{rem}

\subsection{Back to D-modules}

The following theorem of Gaitsgory is the prototype for the main result
of this section, as well as its foundation:
\begin{thm}
\cite[Thm 1.8.2]{DG-Cont} \label{thm:Gaits-contr}Let $Y$ be a connected
affine scheme which can be covered by open subschemes $U_{\alpha}$,
each of which is isomorphic to an open subscheme of the affine space
$\mathbb{A}^{n}$ (for some integer $n$). Then, the pullback functor
\[
\dmod\left(\gmap Y_{\ran}\right)\xleftarrow{t^{!}}\dmod\left(\spec\left(k\right)\right)=\mathbf{Vect}
\]
is fully faithful.\qed
\end{thm}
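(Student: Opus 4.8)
The plan is to reduce the fully-faithfulness to a homological contractibility statement and then to compute the homology by dévissage on the target. By the last part of Remark \ref{rem:homology isom}, applied to $\mathcal{F}=\gmap Y_{\ran}$ with its structure map $t$ to $\spec\left(k\right)$, the functor $t^{!}$ is fully faithful if and only if the natural map
\[
\t H_{\bullet}\left(\gmap Y_{\ran};k\right)=t_{!}t^{!}\left(k_{\t{pt}}\right)\rightarrow k_{\t{pt}}
\]
is an equivalence, i.e. if and only if $\gmap Y_{\ran}$ is homologically contractible. So the whole problem becomes the computation of this homology, which I would organize using the projection $\gmap Y_{\ran}\rightarrow\ran$ together with the homological contractibility of $\ran$ itself (a fact underlying \cite{DG-Cont}).

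First I would settle the case $Y=\mathbb{A}^{n}$. Over a point $S\xrightarrow{F}\ran$ with associated domain $U_{F}$, the fiber of $\gmap{\mathbb{A}^{n}}_{\ran}$ classifies $n$-tuples of regular functions on $U_{F}$, i.e. a filtered colimit of affine spaces (bounded-pole Riemann--Roch spaces). The fibrewise scaling action $\left(s,f\right)\mapsto s\cdot f$ of $\mathbb{A}^{1}$ contracts each such fiber onto the zero map, so the fibers are homologically trivial. Since D-module homology is $\mathbb{A}^{1}$-invariant and $\ran$ is homologically contractible, the homology of the total space reduces to that of $\ran$ with constant coefficients, giving $\t H_{\bullet}\left(\gmap{\mathbb{A}^{n}}_{\ran}\right)\cong k$, as desired.

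The remaining, and genuinely hard, step is to pass from $\mathbb{A}^{n}$ to the given $Y$, which is only \emph{locally} (Zariski) an open of affine space. The main obstacle is that the linear contraction above does not respect an open target: for $Y\subseteq\mathbb{A}^{n}$ open, scaling toward the zero section leaves $Y$, and more generally the straight-line homotopy $f_{t}=(1-t)y_{0}+tf$ toward a basepoint $y_{0}\in Y$ has finitely many parameter values at which the generic image escapes $Y$ --- an escape that cannot be repaired by shrinking the domain, since that only removes finitely many points of $X$. Consequently one cannot simply reduce to the affine-space case, nor does naive \v{C}ech descent along the cover $\{U_{\alpha}\}$ help, as that would compute $\t H_{\bullet}\left(Y\right)$ rather than $k$ (already $Y=\mathbb{G}_{m}$ is not topologically contractible, yet its space of rational maps must be). The resolution is to exploit the extra flexibility of the Ran formulation: a rational map whose generic value lies in $U_{\alpha}$ may be re-presented on a smaller domain --- equivalently, on a larger finite subset $F$ --- so the unital ``add a point'' operations on $\ran$ furnish a target-respecting contraction performed one point at a time. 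This is precisely the point at which the chiral/factorization-homology machinery of \cite{DG-Cont} is indispensable, and it is the technical heart of the argument; everything before it (the reduction to homological contractibility and the affine-space base case) is comparatively soft.
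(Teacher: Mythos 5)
This theorem is not proven in the paper at all: it is imported verbatim from \cite[Thm 1.8.2]{DG-Cont} (note the \qed attached to the statement and the surrounding text, which introduces it as ``the following theorem of Gaitsgory \dots\ as well as its foundation''). So there is no internal proof to compare your proposal against; the honest benchmark is whether your sketch constitutes a proof on its own, and it does not. Your opening reduction is fine and is exactly the content of Remark \ref{rem:homology isom}: full faithfulness of $t^{!}$ is equivalent to $\t H_{\bullet}\left(\gmap Y_{\ran};k\right)\rightarrow k$ being an equivalence. Your diagnosis of why naive \v{C}ech descent along $\left\{ U_{\alpha}\right\}$ fails (it would compute $\t H_{\bullet}\left(Y\right)$, which is wrong already for $Y=\mathbb{G}_{m}$) is also correct and shows you understand where the difficulty sits.

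The gap is that everything you label ``the technical heart'' is precisely the theorem, and you do not supply it. Two specific points. First, even in the base case $Y=\mathbb{A}^{n}$, passing from fibrewise contractibility of $\gmap{\mathbb{A}^{n}}_{\ran}\rightarrow\ran$ to homological contractibility of the total space requires a base-change or Leray-type statement for these non-representable prestacks, plus the homological contractibility of $\ran$ itself; both are nontrivial theorems (proven in \cite{DG-Cont}), not formal facts, and ``D-module homology is $\mathbb{A}^{1}$-invariant'' does not by itself bridge fibrewise information to the colimit over the Ran space. Second, the local-to-global step --- repairing the failure of the straight-line contraction to respect an open target by using the unital ``add a point'' operations on $\ran$ --- is asserted, not argued; this is where all of the factorization-homology input of \cite{DG-Cont} lives. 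As written, your proposal is a well-informed reading guide to Gaitsgory's proof rather than a proof, and it would be more accurate (and would match what the paper actually does) to cite the result outright rather than present this outline as an argument.
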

In particular, we conclude that under the assumptions of the theorem
\[
\t H_{\bullet}\left(\gmap Y_{\ran};k\right)\cong k
\]
 (see \ref{rem:homology isom}).

In this section we use theorem \ref{thm:Gaits-contr} to obtain more
results of a similar nature. 

\medskip

Recall that the a-priori premise of this paper is that ``the correct''
(from a conceptual point of view) space of generic maps is presented
by the functor of points $\gmap Y$, introduced in \ref{exam:genmaps}. 
\begin{cor}
Let $Y$ be as in theorem \ref{thm:Gaits-contr}. The pullback functor
\[
\dmod\left(\gmap Y\right)\xleftarrow{}\dmod\left(\spec\left(k\right)\right)=\mathbf{Vect}
\]
is fully faithful.\end{cor}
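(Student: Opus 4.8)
The plan is to factor the pullback functor $\dmod\left(\gmap Y\right)\xleftarrow{t^{!}}\vect$ through the Ran presentation of the moduli problem and then conclude by a ``two-out-of-three'' argument, reducing the statement to Theorem \ref{thm:Gaits-contr} together with Proposition \ref{prop:dmod ff}.

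First I would record two identifications. Since $\gmap Y=\lke q\left(\gmap Y_{\dom}\right)$ by Example \ref{exam:genmaps}, the way $\dmod$ is extended to $\pshv{\dom}$ in subsection \ref{stable observables} gives $\dmod\left(\gmap Y_{\dom}\right)\cong\dmod\left(\gmap Y\right)$. Next, as noted in \ref{sub:resolution} one has $i_{*}p_{*}\gmap Y_{\dom}=\gmap Y_{\ran}$ in $\pshv{\aff}_{/\ran}$; applying $\lke s$ — which under the equivalence $\pshv{\Ran}\cong\pshv{\aff}_{/\ran}$ is simply the functor forgetting the structure map to $\ran$ — recovers the plain presheaf $\gmap Y_{\ran}\in\pshv{\aff}$, so that $\dmod\left(\lke s\left(p_{*}\gmap Y_{\dom}\right)\right)\cong\dmod\left(\gmap Y_{\ran}\right)$.

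With these identifications, Proposition \ref{prop:dmod ff} applied to $\mathcal{F}'=\gmap Y_{\dom}\in\pshv{\dom}$ asserts exactly that the pullback functor
\[
\dmod\left(\gmap Y_{\ran}\right)\xleftarrow{}\dmod\left(\gmap Y\right)
\]
is fully faithful. On the other hand, the underlying map of presheaves $\gmap Y_{\ran}\to\gmap Y$ lies over $\spec\left(k\right)$, so by functoriality of $\dmod$ the composite
\[
\vect\xrightarrow{t^{!}}\dmod\left(\gmap Y\right)\xrightarrow{}\dmod\left(\gmap Y_{\ran}\right)
\]
coincides with the pullback $\vect\xrightarrow{t^{!}}\dmod\left(\gmap Y_{\ran}\right)$, which is fully faithful by Theorem \ref{thm:Gaits-contr}.

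Finally I would invoke the two-out-of-three property for equivalences in $\gpd_{\infty}$: for all $M,N\in\vect$ the map on mapping spaces induced by the composite is an equivalence (Gaitsgory), and the map induced by the second factor $\dmod\left(\gmap Y\right)\to\dmod\left(\gmap Y_{\ran}\right)$ is an equivalence (Proposition \ref{prop:dmod ff}), whence the map induced by the first factor $t^{!}\colon\vect\to\dmod\left(\gmap Y\right)$ is an equivalence; that is, $t^{!}$ is fully faithful. I do not anticipate a genuine obstacle here: the corollary is a formal consequence of the two quoted results, and the only point demanding care is the bookkeeping that identifies $\lke s\left(i_{*}p_{*}\gmap Y_{\dom}\right)$ with Gaitsgory's presheaf $\gmap Y_{\ran}$ and that checks commutativity of the resulting triangle of pullback functors.
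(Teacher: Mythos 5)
Your proposal is correct and follows essentially the same route as the paper: both factor $t^{!}$ through the pullback $\dmod\left(\gmap Y_{\ran}\right)\xleftarrow{}\dmod\left(\gmap Y\right)$, apply Theorem \ref{thm:Gaits-contr} to the composite, and cancel using Proposition \ref{prop:dmod ff}. The only cosmetic difference is that the paper splits that middle functor into an equivalence (Proposition \ref{prop:domran}) followed by a fully faithful functor, whereas you invoke the second assertion of Proposition \ref{prop:dmod ff} directly, which packages the same two steps.
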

\begin{proof}
Consider the pull back functors 
\[
\xyR{1pc}\xyC{1pc}\xymatrix{ &  & \mathbf{Vect}\ar[d]^{\cong}\\
\dmod\left(\lke rp_{*}\gmap Y_{\dom}\right)\ar[d]\sb(0.5){\beta^{!}} & \dmod\left(\gmap Y\right)\ar[l]\sb(0.4){\alpha^{!}} & \dmod\left(\spec\left(k\right)\right)\ar[l]\sb(0.4){t^{!}}\\
\dmod\left(\gmap Y_{\ran}\right)
}
\xyR{2pc}\xyC{2pc}
\]
The composition is fully faithful by theorem \ref{thm:Gaits-contr}.
$\alpha^{!}$ is an equivalence by proposition \ref{prop:domran},
and $\beta^{!}$ is fully faithful by proposition \ref{prop:dmod ff}.
We conclude that $t^{!}$ is fully faithful.
\end{proof}
The next result is a minor extension of theorem \ref{thm:Gaits-contr},
in which we remove the requirement that the target be affine:
\begin{thm}
\label{thm:Y not affine}Let $Y$ be a connected and separated scheme
which can be covered by open subschemes $U_{\alpha}$, each of which
is isomorphic to an open subscheme of the affine space $\mathbb{A}^{n}$
(for some integer $n$). Then, the pullback functor 
\[
\dmod\left(\gmap Y\right)\xleftarrow{t^{!}}\dmod\left(\spec\left(k\right)\right)=\mathbf{Vect}
\]
is fully faithful. When $Y$ is projective, $t^{!}$ admits a (globally
defined) left adjoint.
\end{thm}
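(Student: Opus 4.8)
The plan is to reduce the full-faithfulness assertion to the homological contractibility statement $\t H_\bullet(\gmap Y;k)\cong k$ and then bootstrap from the affine case (the preceding corollary) by covering the target. By Remark \ref{rem:homology isom}, full faithfulness of $t^!$ is equivalent to the counit $\t H_\bullet(\gmap Y;k)\to k$ being an equivalence, so it suffices to prove $\t H_\bullet(\gmap Y;k)\cong k$. First I would choose a \emph{finite} affine open cover $\{U_\alpha\}$ of $Y$ refining the given one, with each $U_\alpha$ isomorphic to an affine open subscheme of $\mathbb{A}^n$ (possible since $Y$ is of finite type, hence quasi-compact, and distinguished opens of each chart are affine and again open in $\mathbb{A}^n$). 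Because $Y$ is \emph{separated}, every finite intersection $U_{\alpha_0}\cap\cdots\cap U_{\alpha_m}$ is again affine; being a nonempty open subscheme of an open subscheme of the irreducible scheme $\mathbb{A}^n$, it is irreducible, in particular connected, and is trivially covered by opens isomorphic to opens of $\mathbb{A}^n$. Hence the affine case applies to each such intersection, giving $\t H_\bullet(\gmap{U_{\alpha_0}\cap\cdots\cap U_{\alpha_m}};k)\cong k$.

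Next I would exhibit $\coprod_\alpha \gmap{U_\alpha}\to\gmap Y$ as a Zariski cover. Given an $S$-point of $\gmap Y$ presented by a generic map $f\colon W\to Y$ with $W\subseteq S\times X$ a family of domains, set $S_\alpha:=\mathrm{pr}\big(f^{-1}(U_\alpha)\big)\subseteq S$, where $\mathrm{pr}\colon S\times X\to S$. Since $X\to\spec(k)$ is flat and of finite type, $\mathrm{pr}$ is open, so each $S_\alpha$ is open; and the $S_\alpha$ cover $S$ because the generic point of every fibre of $W$ maps into some $U_\alpha$. Over $S_\alpha$ the map $f$ restricts to a generic map landing in $U_\alpha$, producing the desired local lift. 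Since $\gmap{-}$ preserves fibre products in the target variable (the presheaf $\gmap Y_{\dom}=(S,U_S)\mapsto\t{Hom}_{\S}(U_S,Y)$ is limit-preserving in $Y$, and $\lke q$ is left exact by Remark \ref{associated ordinary}), the \v{C}ech nerve of this cover has $m$-th term $\coprod_{|\overline{\alpha}|=m+1}\gmap{U_{\overline{\alpha}}}$, where $U_{\overline{\alpha}}=U_{\alpha_0}\cap\cdots\cap U_{\alpha_m}$.

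Now I would compute. Homology is colimit-preserving and, since $\dmod$ satisfies fppf descent, factors through fppf sheafification; as $\gmap Y$ is the colimit of the above \v{C}ech nerve after Zariski sheafification, this yields
\[
\t H_\bullet(\gmap Y;k)\cong\colim_{[m]\in\Delta^{\t{op}}}\ \t H_\bullet\Big(\coprod_{|\overline{\alpha}|=m+1}\gmap{U_{\overline{\alpha}}};k\Big)\cong\colim_{[m]\in\Delta^{\t{op}}}\ \bigoplus_{|\overline{\alpha}|=m+1}k,
\]
the last step using the contractibility of each $\gmap{U_{\overline{\alpha}}}$. The right-hand side is precisely the simplicial chain complex of the nerve of the cover $\{U_\alpha\}$. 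The decisive observation is that this nerve is the \emph{full} simplex on the index set: $Y$ is connected and locally irreducible (its charts are irreducible), hence irreducible, so any two nonempty opens meet and therefore every finite intersection $U_{\overline{\alpha}}$ is nonempty. The full simplex is contractible, so the colimit is $k$, proving $\t H_\bullet(\gmap Y;k)\cong k$ and hence full faithfulness of $t^!$. Finally, when $Y$ is projective I would invoke Corollary \ref{cor:dmod equiv gmap}(2): choosing a closed embedding $Y\hookrightarrow\mathbb{P}^n$, that corollary already furnishes a globally defined left adjoint to $t^!\colon\vect\to\dmod(\gmap Y)$, which gives the last assertion.

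I expect the main technical obstacle to be the middle step: verifying that $\coprod_\alpha\gmap{U_\alpha}\to\gmap Y$ is genuinely a cover and identifying its \v{C}ech nerve --- in particular the ``spreading out'' of the generic image via openness of $\mathrm{pr}$, and the compatibility of $\gmap{-}$ with fibre products in $Y$. The conceptually decisive point, by contrast, is cheap once noticed: irreducibility of $Y$ collapses the nerve to a contractible simplex, which is what forces homological contractibility irrespective of the topology of $Y$ (compare Remark \ref{rem:families of generic maps}).
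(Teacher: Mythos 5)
Your proposal is correct and follows essentially the same route as the paper: reduce full faithfulness to $\t H_{\bullet}\left(\gmap Y;k\right)\cong k$ via Remark \ref{rem:homology isom}, exhibit $\coprod_{\alpha}\gmap{U_{\alpha}}\rightarrow\gmap Y$ as a Zariski cover whose \v{C}ech nerve is computed by $\gmap{U_{\overline{\alpha}}}$ (the paper packages these two steps as Lemmas \ref{lem:zariski covers} and \ref{lem:limit preservation}, which you reprove inline), apply the affine case termwise, and collapse the nerve; the projective case is likewise delegated to Corollary \ref{cor:dmod equiv gmap}(2). Your explicit observation that irreducibility of $Y$ forces all finite intersections $U_{\overline{\alpha}}$ to be nonempty and connected is a detail the paper's proof leaves implicit in the phrase ``every intersection of the $U_{i}$'s has the same property,'' and is worth making.
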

The main examples to consider for $Y$ (aside from $\mathbb{A}^{n}$),
are $\mathbb{P}^{n}$, a connected affine algebraic group $G$, and
its flag variety $G/B$. We prove this theorem in \ref{proof:thm:Y not affine}.

\medskip

Recall the functors of points $\bun[H\left(\t{gen}\right)]$ and $\bun[1\left(\t{gen}\right)]\in\pshv{\aff}$
which were introduced in \ref{exam:bungbk}. The following theorem
is the main result of this section:
\begin{thm}
\label{thm:more_cont}$\vphantom{}$ Let $G$ be a connected reductive
algebraic group. Let $H$ be a subgroup of $G$ such that $G/H$ is
rational (e.g., $H=1$, $N$, or any parabolic subgroup). Then, the
pull back functor 
\[
\dmod\left(\bun[H\left(\t{gen}\right)]\right)\xleftarrow{}\dmod\left(\bun\right)
\]
is fully faithful. When $H=B$, this pullback functor admits a (globally
defined) left adjoint.
\end{thm}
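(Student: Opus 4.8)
The plan is to prove full faithfulness by working relative to $\bun$ and reducing to a twisted, relative form of the contractibility in Theorem \ref{thm:Y not affine}. Since every functor of points is the colimit of its points and colimits in $\pshv{\aff}$ are universal, the forgetful map $\bun[H\left(\t{gen}\right)]\rightarrow\bun$ exhibits
\[
\bun[H\left(\t{gen}\right)]\cong\underset{S\rightarrow\bun}{\colim}\left(S\times_{\bun}\bun[H\left(\t{gen}\right)]\right),
\]
and because $\dmod$ sends colimits of functors of points to limits of categories, the pullback functor in question is the limit over $S\rightarrow\bun$ of the fiberwise pullbacks $\dmod\left(S\right)\rightarrow\dmod\left(S\times_{\bun}\bun[H\left(\t{gen}\right)]\right)$. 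A limit of fully faithful functors over a fixed index category is again fully faithful --- mapping spaces in a limit are computed as limits of mapping spaces, and a limit of equivalences is an equivalence --- so it suffices to treat each fiber.

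As in the proof of Proposition \ref{prop:B_bar points equiv}, for $S\rightarrow\bun$ classifying a $G$-bundle $\mathcal{P}_{G}$ one has $S\times_{\bun}\bun[H\left(\t{gen}\right)]\cong\gsect[][S]{S\times X}{\mathcal{P}_{G}/H}$, the space of generic sections of the associated $G/H$-bundle. Thus the fiberwise claim is exactly the relative and twisted analogue of the statement $\dmod\left(\spec\left(k\right)\right)\rightarrow\dmod\left(\gmap{G/H}\right)$ from Theorem \ref{thm:Y not affine}, with the constant target $G/H$ replaced by $\mathcal{P}_{G}/H\rightarrow S\times X$. To see that $G/H$ is a permissible target I would use homogeneity together with rationality: a dense open $V_{0}\subseteq G/H$ is isomorphic to an open of $\mathbb{A}^{n}$, and transitivity of the $G$-action makes the translates $\left\{ g\cdot V_{0}\right\}_{g\in G}$ an open cover of $G/H$ by subschemes each isomorphic to an open of $\mathbb{A}^{n}$; finitely many suffice, and $G/H$ is connected and separated (it is quasi-projective, $G$ being connected). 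For $H$ parabolic (in particular $H=B$) this cell is the big Bruhat cell and $G/H$ is moreover projective.

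The main obstacle is the relative, twisted version of Theorem \ref{thm:Y not affine}: full faithfulness of $\dmod\left(S\right)\rightarrow\dmod\left(\gsect[][S]{S\times X}{\mathcal{P}_{G}/H}\right)$ for a possibly nontrivial $\mathcal{P}_{G}$. I would run the proof of Theorem \ref{thm:Y not affine} verbatim, replacing Proposition \ref{prop:quasimap equiv} by its relative counterpart Proposition \ref{prop:quasisect equiv}, and replacing the absolute contractibility input (Theorem \ref{thm:Gaits-contr}) by a relative form of \cite[Thm 1.8.2]{DG-Cont}. The point allowing the twist to be carried along is that the affine-cell cover of $G/H$ produced above is $G$-homogeneous, hence compatible with the structure group of $\mathcal{P}_{G}$ after trivializing $\mathcal{P}_{G}$ etale-locally on $S\times X$ and descending. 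This establishes full faithfulness for every $H$ with $G/H$ rational.

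Finally, for $H=B$ the globally defined left adjoint is already provided by Corollary \ref{cor:dmod equiv}(2): Drinfeld's compactification $\bunbbar{}$ is schematic and proper over $\bun$ by Lemma \ref{lem:bunbbar representable}, so the resulting $!$-pushforward is left adjoint to $\dmod\left(\bun\right)\rightarrow\dmod\left(\bun[B\left(\t{gen}\right)]\right)$. Equivalently, this is the projective-target case of the relative Theorem \ref{thm:Y not affine}, since $G/B$ is projective.
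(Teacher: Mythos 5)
Your overall architecture --- decompose $\bun[H\left(\t{gen}\right)]$ as the colimit of its fibers over points $S\rightarrow\bun$, use that $\dmod$ carries this to a limit and that a limit of fully faithful functors is fully faithful, identify each fiber with $\gsect[][S]{S\times X}{\mathcal{P}_{G}/H}$, and reduce to a contractibility statement for the fiber --- is sound, and it is in fact the alternative route the paper itself sketches in the remark immediately following its proof. The rationality-plus-homogeneity cover of $G/H$ by opens of $\mathbb{A}^{n}$ is fine, and the left adjoint for $H=B$ via Corollary \ref{cor:dmod equiv}(2) is exactly the paper's argument.

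The gap is in the step you yourself flag as the main obstacle: the twisted relative contractibility $\dmod\left(S\right)\rightarrow\dmod\left(\gsect[][S]{S\times X}{\mathcal{P}_{G}/H}\right)$. Neither \cite[Thm 1.8.2]{DG-Cont} nor Theorem \ref{thm:Y not affine} applies to a nontrivial bundle $\mathcal{P}_{G}/H\rightarrow S\times X$, and the mechanism you propose for removing the twist --- trivializing $\mathcal{P}_{G}$ \'etale-locally on $S\times X$ and descending --- does not work: the functor of generic sections is a colimit over universally dense open subschemes of $S\times X$ and does not satisfy descent along covers of $S\times X$; moreover the translated cells $g\cdot V_{0}$ are permuted by the structure group, so they do not assemble into a preferred finite open cover of the associated bundle. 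The correct untwisting is along a cover of the \emph{base}: by Drinfeld--Simpson there is an \'etale cover $S'\rightarrow S$ after which $\mathcal{P}_{G}$ admits a generic trivialization (globally, $\bun[1\left(\t{gen}\right)]\rightarrow\bun$ is an effective epimorphism after \'etale sheafification), and base change along it identifies the fibration with the constant one with fiber $\gmap{G/H}$. This is precisely what the paper does: it forms the Cartesian square exhibiting $\gmap{G/H}\times\bun[1\left(\t{gen}\right)]$ as the pullback of $\bun[H\left(\t{gen}\right)]$ to $\bun[1\left(\t{gen}\right)]$, applies the untwisted contractibility (Theorem \ref{thm:Gaits-contr}, or rather Theorem \ref{thm:Y not affine} since $G/H$ need not be affine) to the fiber $\gmap{G/H}$, and concludes by Lemma \ref{lem:fibdmod} --- the same ``limit of fully faithful functors'' principle you invoke, but run along the \v{C}ech nerve of $\bun[1\left(\t{gen}\right)]\rightarrow\bun$ rather than over all points of $\bun$. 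With that substitution your argument closes; as written, the twisted input is asserted rather than proved.
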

\noindent Theorem \ref{thm:more_cont} is proven in \ref{proof:thm more cont},
after some preparations.

The existence of the left adjoint can be extended to include any parabolic
subgroup, if the statement (and proof) of proposition \ref{prop:B_bar points equiv}
is extended accordingly.

We remark that the existence of the left adjoint above (and in theorem
\ref{proof:thm:Y not affine}) is a kind of ``properness'' of the
map $\bun[B\left(\t{gen}\right)]\rightarrow\bun$ ($\gmap Y\xrightarrow{}\spec\left(k\right)$),
though this map between functor of points is not schematic. We also
emphasize, as a concrete application, that pullback fully faithfulness
implies that the homology of the spaces in question is equivalent
(see remark \ref{rem:homology isom}).

\medskip

The rest of this section is contains the proofs (and supporting lemmas)
of theorems \ref{thm:Y not affine} and \ref{thm:more_cont}.

By a \emph{Zariski cover} of presheaves we mean a morphism of presheaves,
which becomes an effective epimorphism after sheafification in the
Zariski Grothendieck topology. 
\begin{lem}
\label{lem:zariski covers}The functor 
\[
\gmap -:\S\rightarrow\pshv{\aff}
\]
 carries Zariski covers to Zariski covers.\end{lem}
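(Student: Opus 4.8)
The plan is to reduce to the defining property of a Zariski cover of presheaves, namely local surjectivity on $S$-points. Recall that a morphism $\mathcal{F}\rightarrow\mathcal{G}$ in $\pshv{\aff}$ becomes an effective epimorphism after Zariski sheafification exactly when for every affine scheme $S$ and every $g\in\mathcal{G}\left(S\right)$ there is a Zariski cover $\left\{ S_{j}\rightarrow S\right\}$ such that each $g|_{S_{j}}$ lifts to $\mathcal{F}\left(S_{j}\right)$. Since a general Zariski cover in $\S$ is refined by an open cover, it suffices to treat a generating covering family $\left\{ Y_{i}\hookrightarrow Y\right\} _{i\in I}$ of open subschemes and to show that every $S$-point of $\gmap{Y}$ lifts, Zariski-locally on $S$, to $\gmap{Y_{i}}$ for some $i$.

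First I would unwind the source point. By example \ref{exam:genmaps} together with remark \ref{associated ordinary}, an $S$-point of $\gmap{Y}$ is presented by a family of domains $\left(S,U\right)\in\dom$ together with a regular map $f\colon U\rightarrow Y$. Pulling back the open cover along $f$ produces open subschemes $V_{i}:=f^{-1}\left(Y_{i}\right)\subseteq U$ which cover $U$, each carrying its restricted map $f|_{V_{i}}\colon V_{i}\rightarrow Y_{i}$. On each $V_{i}$ the data $\left(V_{i},f|_{V_{i}}\right)$ is precisely a candidate lift to $\gmap{Y_{i}}$; the only thing that can fail is that $V_{i}$ need not be a family of domains over all of $S$.

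The key step, and the main obstacle, is to produce a Zariski cover of $S$ over which one of the $V_{i}$ does become a family of domains. Let $S_{i}\subseteq S$ be the image of $V_{i}$ under the projection $S\times X\rightarrow S$; since this projection, being the base change of $X\rightarrow\spec\left(k\right)$, is flat and of finite type, hence open, each $S_{i}$ is open, and a point $s$ lies in $S_{i}$ exactly when the fiber $\left(V_{i}\right)_{s}$ is non-empty. I would then check that $\left\{ S_{i}\right\} _{i}$ covers $S$: by definition \ref{def:dom} it suffices to test closed points, and for a closed point $s$ the fiber $U_{s}\subseteq X$ is a dense open in the irreducible curve $X$, so its generic point maps under $f_{s}$ into some $Y_{i}$, forcing $\left(V_{i}\right)_{s}\neq\emptyset$ and hence $s\in S_{i}$. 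Because an open subset of a finite-type $k$-scheme that contains every closed point is the whole scheme, $\left\{ S_{i}\right\}$ is a Zariski cover. By construction every fiber of $V_{i}$ over $S_{i}$ is non-empty, hence dense since $X$ is irreducible, so $V_{i}|_{S_{i}}\subseteq S_{i}\times X$ is a family of domains, and $\left(V_{i}|_{S_{i}},f|_{V_{i}}\right)$ defines the desired point of $\gmap{Y_{i}}\left(S_{i}\right)$.

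Finally I would verify that this lift maps to $g|_{S_{i}}$. Under $\gmap{Y_{i}}\rightarrow\gmap{Y}$ the lift goes to the generic map presented by $\left(V_{i}|_{S_{i}},\,V_{i}\rightarrow Y_{i}\hookrightarrow Y\right)$, whose underlying map is $f|_{V_{i}}$. Since $V_{i}|_{S_{i}}\subseteq U|_{S_{i}}$ are both families of domains over $S_{i}$ and $f|_{V_{i}}$ agrees with $f$ there, remark \ref{associated ordinary} (the passage to $\lke q$ identifies generic data agreeing on a smaller domain) shows this is exactly $g|_{S_{i}}$. This supplies the required local lift, and the statement follows for a general Zariski cover by refining it to an open cover. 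I expect the only genuinely delicate point to be the covering claim for $\left\{ S_{i}\right\}$, where the irreducibility of the fibers of $S\times X\rightarrow S$ and the fiberwise density built into the notion of a family of domains are essential.
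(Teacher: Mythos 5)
Your proof is correct and follows essentially the same route as the paper: present the $S$-point by $\left(S,U\right)$ and $f\colon U\rightarrow Y$, pull back the open cover to $V_{i}=f^{-1}\left(Y_{i}\right)$, and pass to the open images $S_{i}\subseteq S$ to obtain the local lifts. You merely supply details the paper leaves implicit (that the $S_{i}$ cover $S$, that $V_{i}|_{S_{i}}$ is a family of domains, and that the lift recovers $g|_{S_{i}}$ after $\lke q$), all of which are verified correctly.
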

\begin{proof}
Let $Y$ be a scheme, and $\left\{ Y_{i}\rightarrow Y\right\} _{i\in I}$
its finite cover by open subschemes. We must show that for every point
$S\xrightarrow{s}\gmap Y$, there exists a Zariski cover $\tilde{S}\rightarrow S$,
and a lift 
\[
\xymatrix{ &  & {\displaystyle \coprod_{i\in I}}\gmap{Y_{i}}\ar[d]\\
\tilde{S}\ar[r]\ar@{-->}[urr] & S\ar[r] & \gmap Y
}
\]

The point $s$ is presented by a point point $\left(S,U\right)\in\dom$,
together with a regular map $U\rightarrow Y$. For every $i\in I$,
let $U_{i}:=U\times_{Y_{i}}Y\subseteq U$ (it is an open subscheme
of $U$), and let $S_{i}\subseteq S$ be the open subscheme which
is the image of $U_{i}\rightarrow S\times X\rightarrow S$. The composition
$U_{i}\rightarrow U\rightarrow Y$ lands in $Y_{i}$, and thus determines
a lift 
\[
\xymatrix{ &  & \gmap{Y_{i}}\ar[d]\\
S_{i}\ar[r]\ar[urr] & S\ar[r] & \gmap Y
}
\]
Taking $\tilde{S}=\coprod S_{i}$, the map $\coprod S_{i}\rightarrow\coprod\gmap{Y_{i}}$
is the sought after lift of $s$. \end{proof}
\begin{lem}
\label{lem:limit preservation}The functor 
\[
\gmap -:\aff\rightarrow\pshv{\aff}
\]
 preserves finite limits.\end{lem}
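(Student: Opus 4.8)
The plan is to reduce everything to the fact, recorded in Remark~\ref{associated ordinary}, that for each $S\in\aff$ the value $\gmap{Y}(S)$ is computed as a \emph{filtered} colimit
\[
\gmap{Y}(S)\;\cong\;\colim_{U\in(\dom)_S}\t{Hom}_{\S}(U,Y),
\]
indexed by the poset $(\dom)_S$ of $S$-families of domains, with transition maps given by restriction of regular maps to smaller domains, and --- this is the crucial point --- with indexing poset \emph{independent of} $Y$. Since $\gmap{-}$ is $\set$-valued and limits of presheaves are computed pointwise, it suffices to show that for each fixed $S$ the assignment $Y\mapsto\gmap{Y}(S)$ carries a terminal object to a point and a pullback to a pullback of sets; these generate all finite limits.

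For the terminal object I would note that $\t{Hom}_{\S}(U,\spec(k))=\{\ast\}$ for every domain $U$, so $\gmap{\spec(k)}(S)=\colim_{(\dom)_S}\{\ast\}\cong\{\ast\}$ because $(\dom)_S$ is weakly contractible (a filtered poset); hence $\gmap{\spec(k)}$ is terminal in $\pshv{\aff}$. For a cospan $Y_1\to Y\leftarrow Y_2$ in $\aff$, the fiber product $Y_1\times_Y Y_2$ is again affine of finite type, and the corepresentable functor $\t{Hom}_{\S}(U,-)$ preserves this limit, giving
\[
\t{Hom}_{\S}(U,Y_1\times_Y Y_2)\cong\t{Hom}_{\S}(U,Y_1)\times_{\t{Hom}_{\S}(U,Y)}\t{Hom}_{\S}(U,Y_2)
\]
naturally in $U\in(\dom)_S$. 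Commuting the filtered colimit past this finite limit of sets then yields
\[
\gmap{Y_1\times_Y Y_2}(S)\cong\gmap{Y_1}(S)\times_{\gmap{Y}(S)}\gmap{Y_2}(S)=\bigl(\gmap{Y_1}\times_{\gmap{Y}}\gmap{Y_2}\bigr)(S),
\]
naturally in $S$, which is the desired equivalence of presheaves.

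The step requiring the most care is precisely this commutation of the colimit with the pullback. It is legitimate because filtered colimits of sets commute with finite limits, and because one and the same filtered poset $(\dom)_S$ indexes the colimits for $Y_1$, $Y_2$ and $Y$ with compatible (restriction) transition maps; this uniformity is what allows the square of sets to be extracted from under the colimit. Establishing that $(\dom)_S$ is filtered --- the only nonformal input --- amounts to observing that the union and the intersection of two $S$-families of domains are again $S$-families of domains, which for the curve $X$ reduces to the nonemptiness of the intersection of two dense opens on each closed fiber.
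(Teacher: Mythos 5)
Your proof is correct; it is essentially the paper's argument with the key citation unwound. The paper factors $\gmap{-}$ as the composite $\aff\xrightarrow{\gmap{-}_{\dom}}\pshv{\dom}\xrightarrow{\ \lke q}\pshv{\aff}$, notes that $\gmap{-}_{\dom}$ preserves all limits (it is corepresentable at each object of $\dom$), and invokes the left exactness of $\ \lke q$, which Remark \ref{associated ordinary} obtains from a general lemma on morphisms of sites. Your pointwise computation --- identify $\gmap{Y}(S)$ with the filtered colimit of $\t{Hom}_{\S}(U,Y)$ over the $Y$-independent poset of domains in $S\times X$, pull the terminal object and the pullback through the corepresentable functor, and commute the filtered colimit past the finite limit of sets --- is exactly what that left exactness amounts to on these objects. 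What each approach buys: yours is self-contained and elementary, the only geometric input being that two universally dense opens in $S\times X$ have universally dense intersection because $X$ is an irreducible curve; the paper's is shorter and applies verbatim with any limit-preserving presheaf on $\dom$ in place of $\gmap{-}_{\dom}$. Two bookkeeping points to tighten: the colimit is indexed by the fiber $(\dom)_S$ with arrows reversed, so the filteredness you need is precisely closure of $(\dom)_S$ under intersection (which you do note; the union is not what is used), and the naturality in $S$ of the colimit formula is supplied by the co-Cartesian structure of $q^{\t{op}}$, as recorded in Remark \ref{associated ordinary}.
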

\begin{proof}
$\gmap -$ is the composition 
\[
\aff\xrightarrow{\gmap -_{\dom}}\pshv{\dom}\xrightarrow{\ \lke q}\pshv{\aff}
\]
 $\gmap -_{\dom}$ preserves (all) limits, and $\ \lke q$ preserves
finite limits.
\end{proof}

\subsubsection{Proof of Theorem \ref{thm:Y not affine}\label{proof:thm:Y not affine}}

The theorem is now an almost immediate result of lemmas \ref{lem:limit preservation}
and \ref{lem:zariski covers}. 

Let $\left\{ U_{i}\rightarrow Y\right\} _{i\in I}$ be a cover of
$Y$ by its affine open subschemes, which are each isomorphic to an
open subscheme of $\mathbb{A}^{n}$. We note that since $Y$ is separated,
every intersection of the $U_{i}$'s has the same property. 

Construct the \v{C}ech complex corresponding to the cover 
\[
\xyR{0.5pc}\xyC{4pc}\xymatrix{\Delta^{\t{op}}\ar[r]\sp(0.5){U_{\bullet}} & \aff\\
{}[n]\ar@{|->}[r] & {\displaystyle \coprod_{|\overline{i}|=n}U_{\overline{i}}}
}
\xyR{2pc}\xyC{2pc}
\]
where $\overline{i}=\left(i_{1},\ldots,i_{n}\right)$ is a multi-index
of elements in $I$, and $U_{\overline{i}}=\cap_{k=1}^{n}U_{i_{k}}$.
We have that $Y=\underset{[n]\in\Delta^{\t{op}}}{\colim}\left({\displaystyle \coprod_{|\overline{i}|=n}U_{\overline{i}}}\right)$.
By lemma \ref{lem:limit preservation}, the simplicial object 
\[
\xyR{0.5pc}\xyC{4pc}\xymatrix{\Delta^{\t{op}}\ar[r]\sp(0.4){\gmap{U_{\bullet}}} & \pshv{\aff}\\
{}[n]\ar@{|->}[r] & {\displaystyle \coprod_{|\overline{i}|=n}\gmap{{\displaystyle U_{\overline{i}}}}}
}
\xyR{2pc}\xyC{2pc}
\]
is the \v{C}ech nerve of $\left\{ \gmap{U_{i}}\xrightarrow{}\gmap Y\right\} _{i\in I}$,
which is a Zariski cover by lemma \ref{lem:zariski covers}. We conclude
that the homology of $\gmap Y$ is isomorphic to that of a point,
being the colimit

\begin{multline*}
\t H_{\bullet}\left(\gmap Y;k\right)\cong\underset{[n]\in\Delta^{\t{op}}}{\colim}\t H_{\bullet}\left(\coprod_{|\overline{i}|=n}\gmap{U_{\overline{i}}};k\right)\cong\\
\cong\underset{[n]\in\Delta^{\t{op}}}{\colim}\t H_{\bullet}\left(\coprod_{|\overline{i}|=n}\spec\left(k\right);k\right)\cong\t H_{\bullet}\left(\spec\left(k\right);k\right)
\end{multline*}
Finally, the equivalence $\t H_{\bullet}\left(\mathcal{F}\right)\cong\t H_{\bullet}\left(\spec\left(k\right);k\right)$
implies that the fully faithfulness of $t^{!}$ (see remark \ref{rem:homology isom}).

Regarding the existence of the left adjoint, when $Y$ is projective,
this is a restatement of corollary\textbf{ \ref{cor:dmod equiv gmap}
}(2). \qed

\bigskip

We continue with the preparations for the proof of the theorem \ref{thm:more_cont}.
The following is a corollary of lemma \ref{lem:limit preservation}:
\begin{cor}
Let be $G$ an algebraic group. 
\begin{enumerate}
\item $\gmap G$ is a group object in $\pshv{\aff}$.
\item If $\, Y$ is a scheme acted on by $G$, then $\gmap Y$ is acted
on by $\gmap G$.
\end{enumerate}
\end{cor}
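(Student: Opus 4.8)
The plan is to deduce both statements formally from the limit-preservation property established in Lemma \ref{lem:limit preservation}, using the general principle that a finite-product-preserving functor transports algebraic structures defined by finite-product diagrams from one category to another.

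First I would recall the relevant generality. In any category $\mathcal{C}$ admitting finite products, a group object is an object $A$ equipped with morphisms $m:A\times A\to A$, $e:\ast\to A$ (where $\ast$ is the terminal object) and $\iota:A\to A$, subject to the associativity, unit, and inverse axioms, each of which is the commutativity of a diagram assembled purely from $m$, $e$, $\iota$, the projections, the diagonal, and the canonical maps to $\ast$. Likewise, an action of $A$ on an object $Y$ is a morphism $a:A\times Y\to Y$ subject to axioms of the same finite-product form. Consequently, if $F:\mathcal{C}\to\mathcal{D}$ is any functor preserving finite products (equivalently, finite products together with the terminal object, i.e. the empty product), then $F$ carries a group object of $\mathcal{C}$ to a group object of $\mathcal{D}$, and an $A$-action on $Y$ to an $F(A)$-action on $F(Y)$: one applies $F$ to all the structure maps, uses the coherence isomorphisms $F(A\times A)\cong F(A)\times F(A)$ and $F(\ast)\cong\ast$ to reinterpret them, and observes that the axiom diagrams remain commutative after applying $F$.

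Next I would apply this to $F=\gmap{-}$. An algebraic group $G$ is by definition a group object in $\S$, with $\spec(k)$ as the terminal object, and a $G$-scheme $Y$ supplies an action map $G\times Y\to Y$ in $\S$. By Lemma \ref{lem:limit preservation} the functor $\gmap{-}$ preserves finite limits, so in particular it preserves finite products and the terminal object (indeed $\gmap{\spec(k)}$ is the terminal presheaf, since $\t{Hom}_{\S}(U_{S},\spec(k))$ is a point). Thus $\gmap{-}$ satisfies the hypothesis of the previous paragraph, and statements (1) and (2) follow at once.

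I do not expect a substantive obstacle here; the whole content of the corollary is carried by Lemma \ref{lem:limit preservation}, and the rest is formal. The one point that warrants a word of care is that Lemma \ref{lem:limit preservation} is phrased for $\gmap{-}:\aff\to\pshv{\aff}$, whereas $G$ and $Y$ may be non-affine schemes. However, the construction of $\gmap{-}$ and the argument of that lemma --- that $\gmap{-}_{\dom}$ preserves all limits while $\lke q$ preserves finite limits --- are insensitive to affineness, so the same reasoning shows that $\gmap{-}:\S\to\pshv{\aff}$ preserves finite limits, which is all that is needed.
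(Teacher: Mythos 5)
Your proposal is correct and matches the paper's intent exactly: the paper states this corollary without proof, presenting it as an immediate consequence of Lemma \ref{lem:limit preservation}, and your argument (a finite-product-preserving functor transports group objects and actions) is precisely the formal step being left implicit. Your remark that the lemma's argument extends from $\aff$ to all of $\S$ is a worthwhile clarification, since $G$ and $Y$ need not be affine.
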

\begin{defn}
A map of presheaves $\mathcal{E}\rightarrow\mathcal{B}$ is an \emph{fppf-locally
trivial fibration with fiber $\mathcal{F}$,} if there exists an fppf
cover $\mathcal{B}^{'}\rightarrow\mathcal{B}$ (i.e., a morphism of
presheaves which becomes an effective epimorphism after fppf sheafification),
and a map 
\[
\mathcal{B}^{'}\times_{\mathcal{B}}\mathcal{E}\rightarrow\mathcal{F}
\]
which exhibits the former as a product $\mathcal{B}^{'}\times_{\mathcal{B}}\mathcal{E}\cong\mathcal{F}\times\mathcal{B}^{'}$.\end{defn}
\begin{lem}
\label{lem:fibdmod}Let $\mathcal{E}\xrightarrow{p}\mathcal{B}$ be
an fppf-locally trivial fibration with fiber $\mathcal{F}$ of presheaves
in $\pshv{\aff}$. 

If $\dmod\left(\mathcal{F}\right)\xleftarrow{t^{!}}\dmod\left(\spec\left(k\right)\right)$
is fully faithful, then $\dmod\left(\mathcal{E}\right)\xleftarrow{p^{!}}\dmod\left(\mathcal{B}\right)$
is fully faithful.\end{lem}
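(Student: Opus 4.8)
The plan is to use the fppf descent property of $\dmod$ to reduce to the case of a \emph{trivial} (product) fibration, and then to invoke the hypothesis on $t^{!}$ through a Künneth argument. Let $\mathcal{B}'\to\mathcal{B}$ be an fppf cover trivializing $p$, so that $\mathcal{B}'\times_{\mathcal{B}}\mathcal{E}\cong\mathcal{F}\times\mathcal{B}'$ over $\mathcal{B}'$. First I would form the \v{C}ech nerve $(\mathcal{B}')^{(\bullet)}$ of $\mathcal{B}'\to\mathcal{B}$, with $(\mathcal{B}')^{(n)}=\mathcal{B}'\times_{\mathcal{B}}\cdots\times_{\mathcal{B}}\mathcal{B}'$, whose colimit recovers $\mathcal{B}$ after fppf sheafification, and pull it back along $p$ to obtain the \v{C}ech nerve $(\mathcal{E}')^{(\bullet)}$ of the base-changed cover $\mathcal{B}'\times_{\mathcal{B}}\mathcal{E}\to\mathcal{E}$ (again fppf, since effective epimorphisms of sheaves are stable under base change), with $(\mathcal{E}')^{(n)}=(\mathcal{B}')^{(n)}\times_{\mathcal{B}}\mathcal{E}$ and colimit $\mathcal{E}$ after sheafification. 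Since $\dmod$ factors through fppf sheafification (by \cite[Cor 3.1.4]{GaRo}) and carries colimits of presheaves to limits, I obtain
\[
\dmod\left(\mathcal{B}\right)\cong\lim_{[n]\in\Delta}\dmod\left((\mathcal{B}')^{(n)}\right),\qquad\dmod\left(\mathcal{E}\right)\cong\lim_{[n]\in\Delta}\dmod\left((\mathcal{E}')^{(n)}\right),
\]
and, the two \v{C}ech nerves being compatible over $p$, the functor $p^{!}$ is the limit of the level-wise pullbacks $p_{n}^{!}\colon\dmod((\mathcal{B}')^{(n)})\to\dmod((\mathcal{E}')^{(n)})$. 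Because in a limit of $\infty$-categories the mapping space between two objects is the limit of the level-wise mapping spaces, full faithfulness of $p^{!}$ will follow once each $p_{n}^{!}$ is shown to be fully faithful.

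Next I would identify the level-wise terms as products. All the structure maps $(\mathcal{B}')^{(n)}\to\mathcal{B}$ coincide, so, projecting onto the first factor $(\mathcal{B}')^{(n)}\to\mathcal{B}'$ and using the trivialization,
\[
(\mathcal{E}')^{(n)}=(\mathcal{B}')^{(n)}\times_{\mathcal{B}}\mathcal{E}\cong(\mathcal{B}')^{(n)}\times_{\mathcal{B}'}\bigl(\mathcal{B}'\times_{\mathcal{B}}\mathcal{E}\bigr)\cong\mathcal{F}\times(\mathcal{B}')^{(n)},
\]
under which $p_{n}$ becomes the projection $\pi\colon\mathcal{F}\times(\mathcal{B}')^{(n)}\to(\mathcal{B}')^{(n)}$. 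Thus it remains to prove the following product statement: for every $Z\in\pshv{\aff}$, if $t^{!}\colon\vect\to\dmod(\mathcal{F})$ is fully faithful, then pullback $\pi^{!}$ along the projection $\mathcal{F}\times Z\to Z$ is fully faithful. Writing $Z=\underset{T\to Z}{\colim}\,T$ as the colimit of the affine schemes over it, so that $\mathcal{F}\times Z=\underset{T\to Z}{\colim}(\mathcal{F}\times T)$, and running the same limit-of-mapping-spaces argument as above, I reduce further to the case $Z=T$ affine.

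For $T$ affine, $\dmod(T)$ is dualizable, so the Künneth equivalence $\dmod(\mathcal{F}\times T)\cong\dmod(\mathcal{F})\otimes\dmod(T)$ holds and identifies $\pi^{!}$ with $t^{!}\otimes\mathrm{id}_{\dmod(T)}$; tensoring the fully faithful colimit-preserving functor $t^{!}$ with the dualizable category $\dmod(T)$ preserves full faithfulness, which closes the argument. This last point is the relative (in families) upgrade of the homological contractibility encoded by the fully faithfulness of $t^{!}$, and is exactly the projection statement already invoked in the proof of Proposition \ref{prop:dmod ff} for the map $\ran\times S\to S$ (see \cite[Thm 1.6.5]{DG-Cont} and \cite[Prop 4.3.3]{CA}). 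I expect the genuine content of the proof to be concentrated precisely here, in passing from contractibility over a point to contractibility in families via the Künneth decomposition and the stability of full faithfulness under $-\otimes\dmod(T)$; the descent bookkeeping of the first two paragraphs is routine given that $\dmod$ satisfies fppf descent and sends presheaf colimits to limits of categories.
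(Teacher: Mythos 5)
Your proof follows essentially the same route as the paper's: \v{C}ech descent along the trivializing fppf cover, reduction of the mapping-space comparison to the levelwise pullback functors, and identification of each $p_{n}$ with the projection $\mathcal{F}\times\mathcal{B}_{n}\rightarrow\mathcal{B}_{n}$. The only difference is that where the paper simply asserts full faithfulness of pullback along this projection, you justify it by a further reduction to affine base and a K\"unneth/dualizability argument, which is correct and supplies the one step the paper leaves implicit.
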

\begin{proof}
Let $M,N\in\dmod\left(\mathcal{B}\right)$. We must show that 
\[
(*)\,\,\,\,\,\,\,\,\t{Map}_{\mathcal{E}}\left(p^{!}M,p^{!}N\right)\xleftarrow{}\t{Map}_{\mathcal{B}}\left(M,N\right)
\]
 is an equivalence of $\infty$-groupoids. 

Fix a Cartesian square 
\[
\xymatrix{\mathcal{F}\times\mathcal{B}_{0}\ar[r]\ar[d]\ar@<-1ex>@{}[dr]|(0.2){\ulcorner} & \mathcal{E}\ar[d]\\
\mathcal{B}_{0}\ar[r] & \mathcal{B}
}
\]
in which $\mathcal{B}_{0}\rightarrow\mathcal{B}$ is an fppf cover.
Denote the \v{C}ech simplicial complex associated with the cover
$\mathcal{B}_{0}\rightarrow\mathcal{B}$ by 
\[
\xyR{0.5pc}\xymatrix{\Delta^{\t{op}}\ar[r]\sp(0.4){\mathcal{B}_{\bullet}} & \pshv{\aff}\\
{}[n]\ar@{|->}[r] & \mathcal{B}_{n}\save+<1.8cm,-0.25cm>*{:=\underbrace{\mathcal{B}_{0}\times_{\mathcal{B}}\cdots\times_{\mathcal{B}}\mathcal{B}_{0}}_{n-\t{times}}}\restore
}
\xyR{2pc}
\]
and the one associated with the cover $\mathcal{B}_{0}\times\mathcal{F}\rightarrow\mathcal{E}$
by 
\[
\xyR{0.5pc}\xymatrix{\Delta^{\t{op}}\ar[r]\sp(0.35){\left(\mathcal{B}_{0}\times\mathcal{F}\right)_{\bullet}} & \pshv{\aff}\\
{}[n]\ar@{|->}[r] & \left(\mathcal{B}_{0}\times\mathcal{F}\right)_{n}\save+<3.2cm,-0.25cm>*{:=\underbrace{\left(\mathcal{B}_{0}\times\mathcal{F}\right)\times_{\mathcal{E}}\cdots\times_{\mathcal{E}}\left(\mathcal{B}_{0}\times\mathcal{F}\right)}_{n-\t{times}}}\restore
}
\xyR{2pc}
\]
There exist an equivalences of stable $\infty$-categories 
\[
\dmod\left(\mathcal{B}\right)\cong\lim_{[n]\in\Delta}\dmod\left(\mathcal{B}_{n}\right)\,\,\,\t{and}\,\,\,\dmod\left(\mathcal{E}\right)\cong\lim_{[n]\in\Delta}\dmod\left(\left(\mathcal{B}_{0}\times\mathcal{F}\right)_{n}\right)
\]
and $p^{!}$ is induced by a transformation of the co-simplicial diagrams.

Let $M_{n}$ and $N_{n}$ denote the images of $M$ and $N$ in $\dmod\left(\mathcal{B}_{n}\right)$.
Let $\left(p^{!}M\right)_{n}$ and $\left(p^{!}N\right)_{n}$ denote
the images of $M$ and $N$ in $\dmod\left(\left(\mathcal{B}_{0}\times\mathcal{F}\right)_{n}\right)$.
We have equivalences of $\infty$-groupoids 
\[
\t{Map}_{\mathcal{B}}\left(M,N\right)\cong\lim_{[n]\in\Delta}\t{Map}_{\mathcal{B}_{n}}\left(M_{n},N_{n}\right)
\]
and 
\[
\t{Map}_{\mathcal{E}}\left(p^{!}M,p^{!}N\right)\cong\lim_{[n]\in\Delta}\t{Map}_{\left(\mathcal{B}_{0}\times_{\mathcal{B}}\mathcal{E}\right)_{n}}\left(\left(p^{!}M\right)_{n},\left(p^{!}N\right)_{n}\right)
\]
We have that $\left(p^{!}M\right)_{n}\cong p_{n}^{!}N_{n}$, where
$p_{n}$ is the map $ $$\left(\mathcal{B}_{0}\times_{\mathcal{B}}\mathcal{E}\right)_{n}\xrightarrow{p_{n}}\mathcal{B}_{n}$.
Furthermore, the map $(*)$ is the limit of the maps 
\[
(**)\,\,\,\,\,\,\,\t{Map}_{\left(\mathcal{B}_{0}\times_{\mathcal{B}}\mathcal{E}\right)_{n}}\left(p_{n}^{!}M_{n},p_{n}^{!}N_{n}\right)\xleftarrow{}\t{Map}_{\mathcal{B}_{n}}\left(M_{n},N_{n}'\right)
\]
Finally, observing that for each $n$ we have a commuting diagram
\[
\xymatrix{\left(\mathcal{B}_{0}\times_{\mathcal{B}}\mathcal{E}\right)_{n}\ar[r]^{\cong}\ar[d]^{p_{n}} & \mathcal{F}\times\mathcal{B}_{n}\ar[d]\\
\mathcal{B}_{n}\ar[r]^{=} & \mathcal{B}_{n}
}
\]
which implies that, for every $[n]\in\Delta$, the functor $p_{n}^{!}$
is fully faithful.Thus, the map $(**)$ is an equivalence, whence
we conclude that the map $(*)$ is an equivalence.
\end{proof}
The proof below uses the symmetric properties of the map $\bun[H\left(\t{gen}\right)]\rightarrow\bun$.
After the proof we indicate a strategy for another proof, similar
to that of theorem \ref{thm:Y not affine}..

\subsubsection{Proof of Theorem \ref{thm:more_cont}\label{proof:thm more cont}{}}

Observe that there exists  a Cartesian square
\[
\xymatrix{\gmap{G/H}\times\bun[1\left(\t{gen}\right)]\ar[d]_{p}\ar[r]\sp(0.65){a} & \bun[H\left(\t{gen}\right)]\ar[d]\\
\bun[1\left(\t{gen}\right)]\ar[r] & \bun
}
\]

The functor $\bun[1\left(\t{gen}\right)]\rightarrow\bun$ becomes
an effective epimorphism after Etale sheafification. Indeed, if $\mathcal{P}_{G}$
is $G$-torsor on $S\times X$ then, by the Drinfeld-Simpson theorem
\cite[Thm 2]{DS}, there exists an Etale base change $S'\rightarrow S$,
such that $\mathcal{P}_{G}\times_{S}S^{'}$ is Zariski locally trivial,
hence admits a generic trivialization.

Our assumptions on $H$ imply that it may be covered by open subschemes
which are isomorphic to open subschemes of affine space. Thus by theorem
\ref{thm:Gaits-contr}, 
\[
\dmod\left(\gmap H\right)\xleftarrow{}\dmod\left(\spec\left(k\right)\right)
\]
is fully faithful. The fully faithfulness of the pullback functor
\[
\dmod\left(\bun[H\left(\t{gen}\right)]\right)\xleftarrow{}\dmod\left(\bun\right)
\]
now follows from \ref{lem:fibdmod}.

In the case when $H=B$, the existence of a left adjoint is a restatement
of corollary \ref{cor:dmod equiv} (2). \qed
\begin{rem}
A different proof of the theorem may be deduced from the following
statement:

Let $Y\rightarrow S\times X$ be an fppf fiber bundle with fiber $F$,
which becomes Zariski-locally trivial, after a suitable fppf base
change $\tilde{S}\rightarrow S$. And such that $\dmod\left(F\right)\xleftarrow{}\dmod\left(\spec\left(k\right)\right)$
is fully faithful.

Then the pullback functor 
\[
\dmod\left(\gsect[][S]{S\times X}Y\right)\xleftarrow{}\dmod\left(S\right)
\]
is fully faithful.\end{rem}

\section{\label{sec:Abstract-nonsense}Appendix A - The quasi functor $\dmod$
and other abstract nonsense}

The main purpose of this section is to sketch out the construction
of the functorial assignment
\[
\mathcal{F}\mapsto\dmod\left(\mathcal{F}\right)
\]
There are two catches, the first is that we wish to allow $\mathcal{F}\in\pshv{\aff}$
to be an arbitrary functor of points without assuming any representability
properties (such as being a scheme or Artin stack). The second is
that by $\dmod\left(\mathcal{F}\right)$ we mean a \emph{stable }$\infty$-category
whose homotopy category is the eponymous triangulated category which
is usually considered. 

What follows is intended as a broad overview only; we shall point
to references where  details may be found.

\subsection{Motivation}

We think of the quasi-functor 
\[
\Sop\xrightarrow{\dmod^{\triangle}}\mbox{\ensuremath{\left\{  \t{triangulated\,\ cateogries}\right\} } }
\]
which assigns to a scheme $S$ its category of D-module sheaves, as
an invariant defined on schemes whose values are triangulated categories.
$\qco^{\triangle}$, which assigns the category of quasi-coherent
sheaves, and $\indco^{\triangle}$ which assigns the category of ind-coherent
sheaves are other triangulated category valued invariants commonly
considered in the study of schemes. We wish to study spaces presented
by arbitrary functors of points (such as the ones considered in this
paper) by the same kind of invariants, thus we wish to extend their
domains of definition to include all functors of points - representable
or not. 

Consider first a diagram of schemes $I\xrightarrow{F}\S$, where $I$
is a small index category. Informally, the value of $\dmod^{\triangle}$
on such a diagram is $\lim_{I}\dmod^{\triangle}\left(F\left(i\right)\right)$.
As a first approximation, this limit is the category whose objects
consist of an assignment of D-modules 
\[
i\mapsto\mathcal{F}_{i}\in\dmod^{\triangle}\left(F\left(i\right)\right),
\]
 which is required to be pullback compatible in the sense that for
every morphism $i\xrightarrow{f}j$ in $I$, the D-modules $\mathcal{F}_{i}$
and $F\left(f\right)^{*}\left(\mathcal{F}_{j}\right)$ on the scheme
$F\left(i\right)$ are identified (compatibly). 

However, the totality of triangulated categories is not suited for
taking such limits. In practice, we take a kind of homotopy limit
and it is the formalism of this process which is the topic of this
section. This homotopy limit will manifest itself as a (coherent)
limit within the $\infty$-category of \emph{stable} $\infty$-categories
(see below).

\subsection{$\infty$-categories}

We use the term \emph{$\infty$-category} to refer to the abstract
(model independent) notion of an $\left(\infty,1\right)$-category
- a collection of objects with an infinite hierarchy of morphisms,
in which all n $n$-morphisms for $n\geq2$ are invertible. For the
most part, we use the notation of \cite{HT} and \cite{HA}.

To an $\infty$-category, $C$, one can associate an ordinary category
$hC$ - its homotopy category. It is the left adjoint to the inclusion
of ordinary categories in $\infty$-categories (\cite[1.2.3]{HT}).

Being a \emph{stable $\infty$-category} is a property an $\infty$-category
(rather than structure, see \cite[1.1.1.14]{HA}). In setting of $\infty$-categories,
these stable categories play a role similar to that of abelian categories
in the ordinary setting. The homotopy category of a stable $\infty$-category
is naturally a triangulated category (\cite[1.1.2.13]{HA}). As such,
we think of a stable $\infty$-category as remembering higher morphisms
which its triangulated homotopy category has forgotten.\emph{ Exactness
}is a property of functors between stable $\infty$-categories (\cite[1.1.4]{HA}).
The functor it induces between the triangulated homotopy categories
is then a triangulated functor. 
\begin{rem}
Set theory must be dealt with in some way. We consider three sizes
of sets: small, large and abominable%
\footnote{cf. french alpine grades. %
}. One can make sense of these by considering a chain of universes
of sets $\mathcal{U}\subseteq\hat{\mathcal{U}}\subseteq\hat{\hat{\mathcal{U}}}$,
each a set in its super universes. Set size corresponds to universe
size (small sets are the objects of $\mathcal{U}$, which is itself
a large set. Large sets are the objects of $\hat{\mathcal{U}}$, which
is itself an abominable set etc). 

It is the theory of $\mathcal{U}$ we are interested in, the others
are auxiliary universes introduced to make sense of ``large constructions''.
In particular $k$ is a field in $\mathcal{U}$, and $\S$ refers
to the category of finite type schemes over $k$ which belong to $\mathcal{U}$.
Note that due to the ``finite type'' restriction, $\S$ is an essentially
small category.
\end{rem}
A large $\infty$-category may have the property of being \emph{presentable}.
For the precise definition see \cite[section 5.5]{HT}. Let us only
say that this property allows us to deal with a large category from
within the small universe and that most large categories we shall
deal with will shall have this property.

\subsubsection{Categories of $\infty$-cat's}

The totality of all small $\infty$-categories is itself given the
structure of an (large) $\infty$-category, denoted $\catinf$. We
denote by $\gpdinf\subseteq\catinf$ the full subcategory (small)
$\infty$-groupoids, also known as spaces or homotopy-types. 

Likewise, the totality of all large $\infty$-categories is denoted
$\cathat$. Let $\catexl\subseteq\cathat$ be the full subcategory
of presentable stable categories and functors which preserve small
colimits (equivalently, are left adjoints, whence the $L$ in the
notation).
\begin{prop}
\cite[1.1.4.4]{HA}\cite[5.5.3.13]{HT}\label{pro:limits in stab}The
(abominably large) infinity category $\catexl$ admits all small limits.
its inclusion into $\cathat$ is limit preserving. \qed
\end{prop}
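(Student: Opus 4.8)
The plan is to realize $\catexl$ as a full subcategory of $\prl$ and then assemble the two cited inputs: that $\prl$ admits small limits which are computed in $\cathat$ (\cite[5.5.3.13]{HT}), and that a limit of stable $\infty$-categories along exact functors is again stable (\cite[1.1.4.4]{HA}). First I would record the elementary observation that a colimit-preserving functor between stable presentable categories is automatically exact: in a stable $\infty$-category a square is a pushout if and only if it is a pullback, so preservation of finite colimits forces preservation of finite limits. Consequently $\catexl$ is precisely the full subcategory of $\prl$ spanned by those presentable categories which happen to be stable, and forgetting a morphism of $\catexl$ down to $\catex$ (or $\catinf$) is well-defined.

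Next, given a small diagram $p\colon K\to\catexl$, I would form the limit $C$ of its image in $\prl$. By \cite[5.5.3.13]{HT} this limit exists and the inclusion $\prl\hookrightarrow\cathat$ preserves it, so the underlying object $C$ is the limit computed in $\cathat$ and the projections $C\to p(k)$ are left adjoints. The key remaining point is to check that $C$ is stable. For this I would push the diagram one step further into $\catex$ (legitimate by the first paragraph, since each transition functor is exact) and invoke \cite[1.1.4.4]{HA}, which guarantees that the limit of a diagram of stable categories and exact functors, computed in $\cathat$, is again stable with exact projections. Since the underlying $\cathat$-object of $C$ agrees with this stable limit, $C$ is a presentable stable category, hence an object of $\catexl$.

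Finally I would conclude formally. Because $\catexl\subseteq\prl$ is full and the $\prl$-limit $C$ lies in $\catexl$, the same object $C$ with its projections satisfies the universal property of the limit inside $\catexl$: for any $D\in\catexl$ one has $\t{Map}_{\catexl}(D,C)=\t{Map}_{\prl}(D,C)\simeq\lim_{k}\t{Map}_{\prl}(D,p(k))=\lim_{k}\t{Map}_{\catexl}(D,p(k))$, the outer equalities because mapping spaces are inherited by full subcategories and the middle equivalence because $C$ is the limit in $\prl$. Thus $\catexl$ admits all small limits, and since the inclusion factors as $\catexl\hookrightarrow\prl\hookrightarrow\cathat$ with the second functor limit-preserving by \cite[5.5.3.13]{HT}, so is the composite.

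I expect the only genuine obstacle to be the bookkeeping that reconciles the three universal properties: verifying that the object produced as a limit in $\prl$ really serves as a limit in the full subcategory $\catexl$, and that its underlying $\cathat$-object is the one to which \cite[1.1.4.4]{HA} applies. Everything substantive is contained in the two citations, and the size issue (working with large rather than small categories, whence $\catex$ in place of $\mathbf{Cat}_{\infty}^{\t{Ex}}$) is handled by bumping up a single universe, exactly as in the cited statements.
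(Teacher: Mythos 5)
Your argument is correct and is exactly the route the paper intends: the paper offers no written proof beyond the two citations, and your proposal simply assembles them in the standard way (limits exist in $\prl$ and are computed in $\cathat$ by \cite[5.5.3.13]{HT}, stability of the resulting limit follows from \cite[1.1.4.4]{HA} since colimit-preserving functors between stable categories are exact, and fullness of $\catexl$ in $\prl$ transfers the universal property). Nothing is missing.
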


\subsection{Stable invariants\label{sub:Stable-observables}}

Classically one considers triangulated invariants (i.e. $\t{Tricat}$
valued). However, the $\left(2,1\right)$-category $\t{Tricat}$ is
not well suited for taking limits as we wish to do. In order to remedy
this shortcoming we compute our limits in $\catexl$. The stable and
triangulated invariants are related as follows:
\begin{defn}
A \emph{stable invariant} of schemes is a functor $\affop\xrightarrow{\mathcal{A}}\catexl$.
Its associated Triangulated invariant is defined to be the composition
\[
\affop\xrightarrow{\mathcal{A}}\catexl\xrightarrow{h}\mbox{Tricat}
\]

\end{defn}
The particular example we consider in this paper is when $\mathcal{A}$
is $\dmod$, but along the way we will also mention $\indco$. In
these cases the associated triangulated invariant is a familiar notion,
and we would like to have a stable model for it. I.e. a lift

\[
\xymatrix{ & \catexl\ar[d]^{h}\\
\affop\ar[r]^{A}\ar@{-->}[ur] & \mbox{Tricat}
}
\]
It seems to be the case that for the invariants mentioned above, such
a lift exists and is in fact quite natural.

\subsubsection{IndCoh}

Our initial input for the construction of $\dmod$, is the functor
which assigns to a scheme its category of ind-coherent sheaves. Ind-coherent
sheaves are a convenient substitute for quasi-coherent sheaves, for
an in-depth discussion see \cite{DG-Indcoh}. 

In \cite[Cor 3.3.6]{DG-Indcoh} Gaitsgory constructs a functor 
\[
\indco:\aff\rightarrow\catexl
\]
assigning to a scheme a higher category model of its triangulated
category of ind-coherent sheaves, and to every morphism of schemes
the $!-$pullback. There, pre-triangulated DG-categories are taken
as models for stable $\infty$-categories. 

\medskip

We extend $\indco$ to arbitrary functors of points via a right Kan
extension 
\[
\xymatrix{\affop\ar[d]^{j}\ar[r]^{\indco} & \catex\\
\mathcal{P}shv\left(\aff\right)^{\t{op}}\ar@{-->}[ur]_{\indco:=\t{RKE}}
}
\]
where $j$ is the Yoneda embedding. We abusively continue to denote
the extension by $\indco$. So defined, $\indco$ preserves small
limits (\cite[lemma 5.1.5.5]{HT}).

\subsubsection{$\dmod$\label{sub:Defining dmod} }

We follow \cite[2.3.1]{GaRo}.  First we define the de-Rham functor
\begin{eqnarray*}
\pshv{\aff} & \xrightarrow{dR} & \pshv{\aff}\\
\mathcal{F} & \mapsto & \mathcal{F}^{dR}
\end{eqnarray*}
where the presheaf $\mathcal{F}^{dR}$ is defined by $\mathcal{F}^{dR}\left(S\right)=\mathcal{F}\left(S_{\t{red}}\right)$. 

\medskip

We define $\dmod$ on $\pshv{\aff}$ to be the composition
\[
\overbrace{\left(\pshv{\aff}\right)^{\t{op}}\xrightarrow{dR}\left(\pshv{\aff}\right)^{\t{op}}\xrightarrow{\indco}\catexl}^{\dmod:=}
\]

\begin{rem}
The functor $\pshv{\aff}\xrightarrow{\t{dR}}\pshv{\aff}$ is simply
the pullback along the composition $\aff\xrightarrow{\t{red}}\aff$,
thus is colimit preserving. It follows that $\dmod$ is limit preserving,
consequently it is equivalent to the right Kan extension 
\[
\xymatrix{\affop\ar[r]^{\t{dR}}\ar[d]^{\t j} & \pshv{\aff}^{\t{op}}\ar[r]^{\indco} & \catexl\\
\pshv{\aff}^{\t{op}}\ar@{-->}[urr]_{\dmod}
}
\]
so that for every $\mathcal{F}\in\pshv{\aff}$ we have ${\displaystyle \dmod\left(\mathcal{F}\right)\cong\lim_{S\rightarrow\mathcal{F}}\dmod\left(S\right)}$.
\end{rem}
{}
\begin{rem}
What we have defined above should rightfully be called the category
of \emph{crystalline} sheaves. It is well known that for a smooth
scheme $S$ there exist equivalences between the homotopy category
of $\dmod\left(S\right)$ (as we have defined it), and the usual derived
category of sheaves of right modules for the algebra of differential
operators on $S$ (see e.g., \cite[4.7]{GaRo}). Moreover, this equivalence
is compatible with $!$-pullback, and with Kashiwara's theorem. 
\end{rem}

\section{Appendix B}

\subsection{Limits and and colimits of adjoint diagrams}

{}~

Let 
\[
\xyR{0.5pc}\xymatrix{I\ar[r]\sp(0.4){G} & \catexl\\
i\ar@{|->}[r] & C_{i}
}
\xyR{2pc}
\]
be a small diagram. If for every morphism, $i\xrightarrow{f}j$ in
$I$, the functor $G\left(i\right)\xrightarrow{F\left(f\right)}G\left(j\right)$
admits a left adjoint%
\footnote{Not to be confused with the right adjoint it admits by virtue of being
a morphism in $\catexl$.%
}, then there exists a unique diagram (up to contractible ambiguity)
\[
\xyR{0.5pc}\xymatrix{I^{\t{op}}\ar[r]\sp(0.4){F} & \catexl\\
i\ar@{|->}[r] & C_{i}
}
\xyR{2pc}
\]
such that for every morphism, $i\xrightarrow{f}j$, the functor $C_{j}\xrightarrow{F\left(f\right)}C_{i}$
is left adjoint to $G\left(f\right)$. Let us call the pair of diagrams,
$F$ and $G$, \emph{adjoint}.

\medskip

\noindent The following lemma appears in \cite[1.3.3]{DG-DG}, where
it attributed to J. Lurie.
\begin{lem}
\label{lem:adjoint diagrams}If $F$ and $G$ are adjoint $I$-diagrams,
as above. Then \end{lem}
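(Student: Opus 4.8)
The statement I take the lemma to assert is that the two adjoint diagrams have canonically identified (co)limits, i.e. there is an equivalence
\[
\underset{I^{\t{op}}}{\colim}\,F\xrightarrow{\;\cong\;}\lim_{I}G
\]
in $\catexl$, and that under this identification the coprojection $C_{i}\to\underset{I^{\t{op}}}{\colim}\,F$ out of the $i$-th term is left adjoint to the projection $\lim_{I}G\to C_{i}$, for every $i\in I$. The plan is to deduce both assertions from Lurie's self-duality of presentable categories, in its stable incarnation, so that no genuinely new argument is needed beyond bookkeeping.

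First I would recall the equivalence $\Phi\colon\catexl\xrightarrow{\cong}\left(\catexr\right)^{\t{op}}$ which is the identity on objects and sends a colimit-preserving functor to its right adjoint (\cite[5.5.3.4]{HT}); its restriction to the stable setting is legitimate because the right adjoint of an exact functor between stable categories is again exact. Under $\Phi$ the given diagram $F\colon I^{\t{op}}\to\catexl$ corresponds to the diagram of right adjoints, which by hypothesis is precisely $G$, now regarded as a diagram $I\to\catexr$ (each $G(f)$ is both a morphism of the original $\catexl$-diagram and, being right adjoint to $F(f)$, a morphism of $\catexr$).

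Next I would transport the colimit across $\Phi$. Since $\Phi$ is an equivalence it carries $\underset{I^{\t{op}}}{\colim}\,F$ to the colimit of $\Phi\circ F$ computed in $\left(\catexr\right)^{\t{op}}$, that is, to the limit of the right-adjoint diagram $G$ computed in $\catexr$. By Proposition \ref{pro:limits in stab} the inclusion $\catexl\hookrightarrow\cathat$ preserves limits, and the corresponding fact for $\catexr$ (\cite[5.5.3.18]{HT}) identifies the limit of $G$ in $\catexr$ with the limit of the underlying $\cathat$-diagram $i\mapsto C_{i}$, which is the very same underlying diagram whose limit computes $\lim_{I}G$ in $\catexl$. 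Composing these identifications yields the asserted equivalence $\underset{I^{\t{op}}}{\colim}\,F\simeq\lim_{I}G$.

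For the second assertion I would track the universal structure maps through the same chain: the coprojections into a colimit in $\catexl$ are sent by $\Phi$ to the projections out of the corresponding limit in $\catexr$, and since $\Phi$ is by definition the passage to right adjoints, each such coprojection and projection form an adjoint pair, giving the claimed adjunction between $C_{i}\to\underset{I^{\t{op}}}{\colim}\,F$ and $\lim_{I}G\to C_{i}$. I expect the main obstacle to be purely organizational rather than conceptual: keeping the variance straight between $I$ and $I^{\t{op}}$ and between $\catexl$ and $\catexr$, and verifying that the maps produced abstractly by the (co)limit universal properties are the concrete (co)projections. This is in essence the content of \cite[1.3.3]{DG-DG}, attributed there to Lurie.
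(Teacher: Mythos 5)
Your proof is correct and follows essentially the same route as the paper: both arguments rest on the self-duality $\catexr\cong\left(\catexl\right)^{\t{op}}$ (identity on objects, passage to adjoints) together with the fact that limits in $\catexl$ and $\catexr$ are both computed in $\cathat$, so that the limit cone for $G$ and the colimit cone for $F$ are supported on the same objects and their structure maps are matched into adjoint pairs. The only omission is that the paper's lemma also contains a third clause, about a co-augmentation $I^{\triangleleft}\xrightarrow{G^{\triangleleft}}\catexl$ of $G$ by some $C$ with each $C\rightarrow C_{i}$ admitting a left adjoint, asserting that the induced functors $\underset{I^{\t{op}}}{\colim}\,F\rightarrow C$ and $C\rightarrow\lim_{I}G$ are adjoint; you could not have seen this from the truncated statement, and it follows by exactly the same duality argument applied to the augmented diagram.
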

\begin{enumerate}
\item There exists an equivalence of stable $\infty$-categories 
\[
\underset{i\in I^{\t{op}}}{\colim}F\left(i\right)\cong\lim_{i\in I}G\left(i\right)
\]

\item Let $C\in\catexl$, and let $I^{\triangleleft}\xrightarrow{G^{\triangleleft}}\catexl$
be a co-augmentation of $C$ over of $G$, such that each $C\rightarrow C_{i}$
admits a left adjoint. Then, the natural functors 
\[
\xyR{1pc}\xymatrix{\t{colim}_{i\in I^{\t{op}}}F\left(i\right)\ar@{<->}[d]_{\cong}\ar[r]\sp(0.35){} & C\ar@{=}[d]\\
\lim_{i\in I}G\left(i\right) & C\ar[l]
}
\xyR{2pc}
\]
become adjoint, after identifying the right hand side via (1).
\item For every $j\in I$, the natural functors 
\[
\xyR{1pc}\xymatrix{C_{j}\ar[r]\sp(0.35){\rho_{j}}\ar@{=}[d] & \t{colim}_{i\in I^{\t{op}}}F\left(i\right)\ar@{<->}[d]^{\cong}\\
C_{j} & \lim_{i\in I}G\left(i\right)\ar[l]\sp(0.6){\pi_{j}}
}
\xyR{2pc}
\]
 are adjoint, after identifying the left side via 1.\end{enumerate}
\begin{proof}
{}~\end{proof}
\begin{enumerate}
\item The categories $\catexl$ and $\catexr$ both admit small limits,
and that the inclusion into $\cathat$ preserves these (\cite[5.5.3.5,5.5.3.18]{HT}
and \cite[1.1.4.4]{HA}). Consequently, since the diagram $G$ lands
in both categories (viewed as subcategories of $\catex$), we have
an equivalence 
\[
\lim\left(I\xrightarrow{G}\catexl\right)\cong\lim\left(I\xrightarrow{G}\catexr\right)
\]

There exists a duality%
\footnote{Thus, $\catexl$ and $\catexr$ admit colimits as well, but these
are not (in general) preserved by the inclusion into $\cathat$. %
} 
\[
\catexr\xrightarrow{\cong}\left(\catexl\right)^{\t{op}}
\]
which is the identity on objects, and carries each functor to its
left adjoint. It carries a limit cone $I^{\triangleleft}\xrightarrow{G^{\t{lim}}}\catexr$
for $G$, to a colimit cone $\left(I^{\t{op}}\right)^{\triangleright}\xrightarrow{F^{\t{colim}}}\catexl$
for $F$, supported on the same objects. In particular, restricting
to the cone point we get an equivalence 
\[
\lim\left(I\xrightarrow{G}\catexr\right)\cong\colim\left(I^{\t{op}}\xrightarrow{F}\catexl\right)
\]

\item In the limit and colimit diagrams above, the functors $\lim_{i\in I}G\left(i\right)\xrightarrow{\pi_{j}}C_{j}$
and $\lim_{i\in I}G\left(i\right)\xrightarrow{\pi_{j}}C_{j}$ correspond
under the duality, hence are adjoint.
\item By the same argument as in the first part of 1., the functor $C\rightarrow\lim_{i\in I}G\left(i\right)$
can be thought of as a map in $\catexr$. Thus, it admits a left adjoint,
which is its image under the duality. This dual image corresponds
to the the diagram dual to $I^{\t{\triangleleft}}\xrightarrow{G^{\triangleleft}}\catexr$,
whence the assertion follows.
\end{enumerate}

\subsection{A comparison lemma for sites}

The following lemma is an analog of the ``Comparison lemma'' \cite[Thm 2.2.3.]{Jo2},
which applies to sheaves of sets (cf. \cite[Warning 7.1.1.4]{HT}). 
\begin{lem}
\label{Comparison lemma}Let $C$ be a small category with a Grothendieck
topology, and let $C^{0}\xrightarrow[\subseteq]{j}C$ be a full subcategory.
Assume that:
\begin{enumerate}
\item $C$ admits all finite limits.
\item For any fibered product in $C$, $c_{1}\times_{c}c_{2}$, if $c_{1},c_{2}\in C^{0}$
then $c_{1}\times_{c}c_{2}\in C^{0}$ (we do not assume that $c\in C^{0}$).
\item $C^{0}$ is dense in $C$. I.e., every object in $C$ has a cover
by objects in $C^{0}$.
\end{enumerate}
Then, the restriction functor 
\[
\shv{C^{0}}{}\xleftarrow{j_{*}}\shv C{}
\]
is an equivalence, where the topology of $C^{0}$ is the pullback
of the topology of $C$. 
\end{lem}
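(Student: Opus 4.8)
The plan is to realize the restriction functor $j_*\colon \shv{C}{}\to\shv{C^{0}}{}$ as one half of an adjoint equivalence. First I would check that $j_*$ really lands in sheaves. Given a cover $\{c_i\to c\}$ in $C^{0}$, the pullback topology makes it a cover in $C$, and by hypothesis (2) (applied iteratively) all the iterated fibre products $c_{i_0}\times_c\cdots\times_c c_{i_n}$ again lie in $C^{0}$; since $C^{0}\subseteq C$ is full these coincide with the fibre products formed in $C^{0}$, so the \v{C}ech nerve of the cover computed in $C^{0}$ agrees with the one computed in $C$. Hence the descent condition for $j_*\mathcal F$ is literally the descent condition for $\mathcal F$ along a cover in $C^{0}$, and $j_*\mathcal F$ is a sheaf. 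Consequently $j_*$ acquires a left adjoint $j^{*}:=L\circ\lke j$ on sheaves, where $L$ is sheafification on $C$: for $\mathcal G\in\shv{C^{0}}{}$ and $\mathcal F\in\shv{C}{}$ one reads off $\t{Map}(L\lke j\mathcal G,\mathcal F)\cong\t{Map}(\lke j\mathcal G,\mathcal F)\cong\t{Map}(\mathcal G,j_*\mathcal F)$. To prove that $(j^{*},j_*)$ is an adjoint equivalence it suffices to show that (a) the unit $\t{id}\to j_*j^{*}$ is an equivalence, equivalently that $j^{*}$ is fully faithful, and (b) $j_*$ is conservative; indeed the triangle identity $j_*\epsilon\circ\eta_{j_*}=\t{id}$ then forces the counit $\epsilon$ to be an equivalence as well.

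Claim (b) is where density enters. Let $\phi\colon\mathcal F\to\mathcal F'$ be a map in $\shv{C}{}$ with $j_*\phi$ an equivalence, and fix any $c\in C$. By condition (3) there is a cover $\{c_i\to c\}$ with each $c_i\in C^{0}$, and exactly as above condition (2) forces every term of its \v{C}ech nerve into $C^{0}$ — here it is essential that (2) permits the base $c$ to lie outside $C^{0}$. Thus $\mathcal F(c)$ and $\mathcal F'(c)$ are both computed as the limit of $\mathcal F$, respectively $\mathcal F'$, over one and the same $\Delta$-diagram valued in $C^{0}$; since $j_*\phi$ is an equivalence the map is an equivalence on each term, hence on the limit, so $\phi_c$ is an equivalence. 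As this holds for every $c$, the functor $j_*$ is conservative.

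The main obstacle is claim (a), the unit. Because $j$ is fully faithful one has $j_*\lke j\cong\t{id}$ on presheaves, so $j_*j^{*}\mathcal G=j_*L\lke j\mathcal G$ and the whole content is that sheafification does not disturb the restriction back to $C^{0}$. Precisely, I would establish the compatibility $j_*\circ L\cong L^{0}\circ j_*$, where $L^{0}$ is sheafification on $C^{0}$, after which $j_*L\lke j\mathcal G\cong L^{0}j_*\lke j\mathcal G\cong L^{0}\mathcal G\cong\mathcal G$, the last step holding because $\mathcal G$ is already a sheaf. This compatibility is the technical heart. Since $j_*$ preserves all colimits (it is left adjoint to $\rke j$) and $L$ is the localization at the strongly saturated class generated by the covering-sieve inclusions $R\hookrightarrow y(c)$, it is enough to verify that $j_*$ sends each such inclusion to a local equivalence in $\pshv{C^{0}}$ (it already preserves sheaves); and here density lets one refine $R$ by a subsieve $R^{0}\subseteq R$ generated by a $C^{0}$-cover, reducing the statement to covering sieves of $C^{0}$ by a two-out-of-three argument.

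I expect the bookkeeping of these sieves — making precise that the $C^{0}$-generated covering sieves are cofinal and that $j_*$ respects them — to absorb most of the work, since $\lke j\mathcal G$ is \emph{not} in general already a sheaf and so the passage through $L$ cannot be avoided. An equivalent route, which I would fall back on if the sieve argument becomes unwieldy, is to bypass the unit and verify essential surjectivity of $j_*$ directly: extend a sheaf $\mathcal G$ on $C^{0}$ by setting $\mathcal F(c):=\lim_{[n]\in\Delta^{\t{op}}}\mathcal G(c_{\bullet})$ over the \v{C}ech nerve of a $C^{0}$-cover of $c$, and then check, using conditions (1)--(3) and the fact that any two $C^{0}$-covers admit a common $C^{0}$-refinement, that this limit is independent of the chosen cover and assembles into a sheaf on $C$ with $j_*\mathcal F\cong\mathcal G$.
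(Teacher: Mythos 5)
Your proposal is correct in outline, but it takes a genuinely different route from the paper. The paper inverts $j_{*}$ on the \emph{right}: it shows that $\rke j$ carries $\shv{C^{0}}{}$ into $\shv C{}$ and that the adjunction $\left(j_{*},\rke j\right)$ is an equivalence, so sheafification never enters; the price is a direct verification that $\rke j\mathcal{G}$ satisfies descent, which is where all the sieve bookkeeping lives (intersecting a covering sieve $\mathcal{S}_{c}$ with $C_{/c}^{0}$ and checking two right Kan extension triangles). You instead invert $j_{*}$ on the \emph{left}, via $j^{*}=L\circ\lke j$, and the analogous bookkeeping resurfaces as the compatibility $j_{*}\circ L\cong L^{0}\circ j_{*}$ --- which you correctly identify as the technical heart but only sketch; the clean way to finish it is to note that $j_{*}$ preserves colimits, so it suffices to check that each generator $R\hookrightarrow y\left(c\right)$ is sent to a local equivalence in $\pshv{C^{0}}$, and this monomorphism is locally surjective by hypothesis (3) together with the transitivity axiom for covering sieves. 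What your route buys is that essential surjectivity comes for free: conservativity of $j_{*}$ plus the triangle identity forces the counit to be invertible, whereas the paper must argue the unit $\mathcal{F}\rightarrow\rke j\circ j_{*}\mathcal{F}$ directly --- though that argument (choose a cover $c^{0}\rightarrow c$ with $c^{0}\in C^{0}$ and compute $\mathcal{F}\left(c\right)$ from a \v{C}ech nerve lying in $C^{0}$ by (1) and (2)) is word-for-word your conservativity step, so the geometric content is identical. Your fallback route (defining the extension by $\mathcal{F}\left(c\right):=\lim\mathcal{G}\left(c_{\bullet}\right)$ over a $C^{0}$-cover) is essentially the paper's $\rke j$ in disguise. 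One small point to be careful about in your first step: the sheaf condition on $C^{0}$ is a limit over a covering sieve, not a priori over a \v{C}ech nerve, so identifying the two uses that the fibre products over $c$ exist in $C$ by (1), lie in $C^{0}$ by (2), and represent the correct fibre products of presheaves on $C^{0}$ by fullness of $j$.
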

For example, the full subcategory $\dom^{00}\subseteq\dom$ satisfies
the assumptions of this lemma.

The proof uses the slice category notation introduced in \ref{nota:slices and fibers}.
\begin{proof}
We will show that the functor given by right Kan extension 
\[
\shv{C^{0}}{}\xrightarrow{RKE_{j}}\pshv C
\]
 lands in $\shv C{}$. We will then show that the resulting adjoint
functors $\left(j_{*},RKE_{j}\right)$ 
\[
\xymatrix{\shv C{}\ar@<1ex>[r]^{j_{*}} & \shv{C^{0}}{}\ar@<1ex>[l]^{RKE_{j}}}
\]
are inverse equivalences. It is immediate that the co-unit transformation
$j_{*}\circ RKE_{j}\rightarrow1_{C^{0}}$ is an equivalence. 

We assert that the unit transformation is also an equivalence. Let
$\mathcal{F}\in\shv C{}$ , let $c\in C$, and let us prove that 
\[
\mathcal{F}\left(c\right)\rightarrow RKE_{j}\circ j_{*}\mathcal{F}\left(c\right)
\]
 is an equivalence of $\infty$-groupoids (we emphasize that $\mathcal{F}$
is assumed to be a sheaf, and not an arbitrary presheaf). It is a-priori
true that this map is an equivalence whenever $c\in C^{0}$. For general
$c\in C$, let $c^{0}\xrightarrow{f}c$ be a cover with $c^{0}\in C^{0}$,
since $C$ is assumed to admit all limits, $\mathcal{F}\left(c\right)$
may be calculated using the \v{C}ech complex associated to $f$.
By assumption (2), all the terms in this complex belong to $C^{0}$
and the assertion that the unit transformation is an equivalence follows. 

It remains to show that for every $\mathcal{F}_{0}\in\shv{C^{0}}{}$,
its right Kan extension, $\mathcal{F}:=RKE_{j}\mathcal{F}_{0}$, is
a sheaf on $C$. Let $c\in C$, and let $\mathcal{S}_{c}\subseteq C_{/c}$
be a covering sieve. We must show that 
\begin{equation}
\lim\left(\mathcal{S}_{c}^{\t{op}}\xrightarrow{\mathcal{F}}\gpd_{\infty}\right)\xleftarrow{}\mathcal{F}\left(c\right)\label{eq:sheaf condition}
\end{equation}
is an equivalence. 

The categories $C_{/c}^{0}$, and $\mathcal{S}_{c}$ are both full
subcategories of $C_{/c}$, and we denote their intersection 
\[
\mathcal{S}_{c}^{0}:=C_{/c}^{0}\cap\mathcal{S}_{c}
\]

The following triangle is a right Kan extension 
\[
\xymatrix{\left(\mathcal{S}_{c}^{0}\right)^{\t{op}}\ar[r]^{\mathcal{F}_{0}}\ar[d]_{\subseteq} & \gpd_{\infty}\\
\mathcal{S}_{c}^{\t{op}}\ar[ur]_{\mathcal{F}}
}
\]
since for every $d\rightarrow c\in\mathcal{S}_{c}$ we have that $C_{/d}^{0}\xrightarrow{\cong}\left(\mathcal{S}_{c}^{0}\right)_{/d\rightarrow c}$.
Thus it suffices to show that 
\[
\lim\left(\left(\mathcal{S}_{c}^{0}\right)^{\t{op}}\xrightarrow{\mathcal{F}_{0}}\gpd_{\infty}\right)\xleftarrow{}\lim\left(\left(C_{/c}^{0}\right)^{\t{op}}\xrightarrow{\mathcal{F}_{0}}\gpd_{\infty}\right)\cong\mathcal{F}\left(c\right)
\]
is an equivalence. In turn, the latter equivalence will follow if
we show that the following triangle is a right Kan extension
\begin{equation}
\xymatrix{\left(\mathcal{S}_{c}^{0}\right)^{\t{op}}\ar[r]^{\mathcal{F}_{0}}\ar[d]_{\subseteq} & \gpd_{\infty}\\
\left(C_{/c}^{0}\right)^{\t{op}}\ar[ur]_{\mathcal{F}_{0}}
}
\label{eq:RKE}
\end{equation}

We now use our assumptions on the relation between $C$ and $C^{0}$.
Let $c^{0}\xrightarrow{f}c$ where $c^{0}\in C^{0}$. Using hypothesis
(3), conclude that $\mathcal{S}_{c}^{0}$ generates a covering sieve,
over $c$, in $C$. It is always true that the maps 
\[
\left\{ c_{i}\times_{c}c^{0}\rightarrow c^{0}\,:\,\left(c_{i}\rightarrow c\right)\in\mathcal{S}_{c}^{0}\right\} 
\]
generate a covering sieve, over $c^{0}$, in $C$. However, according
to hypothesis (2), each of the fiber products belongs to $C^{0}$,
so that the latter maps also generate a covering sieve in $C^{0}$
(over $c_{0}$), which is simply the fibered product 

\[
\xymatrix{\left(\mathcal{S}_{c}^{0}\right)_{/\left(c^{0}\rightarrow c\right)}\ar[r]\ar[d]^{\subseteq} & \mathcal{S}_{c}^{0}\ar[d]^{\subseteq}\\
C_{/c^{0}}^{0}\ar[r]^{\circ f} & C_{/c}^{0}
}
\]
Finally, since $\mathcal{F}_{0}$ is a sheaf on $C^{0}$ we have an
equivalence 
\[
\lim\left(\left(\left(\mathcal{S}_{c}^{0}\right)_{/\left(c^{0}\rightarrow c\right)}\right)^{\t{op}}\xrightarrow{\mathcal{F}_{0}}\gpd_{\infty}\right)\xleftarrow{\cong}\mathcal{F}_{0}\left(c^{0}\right)
\]
implying that \ref{eq:RKE} is a right Kan extension. Tracing back,
we conclude that $RKE_{j}\mathcal{F}_{0}\in\pshv C$ is a sheaf.
\end{proof}

\subsection{The Ran space }
\begin{prop}
\label{lem:ran fibd in sets}The functor of points $\aff^{\t{op}}\xrightarrow{\ran}\gpd_{\infty}$
takes values in sets. Namely, for every $S\in\aff$. 
\[
\ran\left(S\right)=\left\{ F\subseteq\t{Hom}\left(S,X\right)\,:\, F\,\mbox{ finite,\,\ non-empty}\right\} 
\]
\end{prop}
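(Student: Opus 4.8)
The plan is to compute the value $\ran(S)$ directly from the definition as a colimit, using that colimits of presheaves are computed objectwise. Fix $S\in\aff$ and set $M:=\t{Hom}\left(S,X\right)$. Since $X^{I}(S)=\t{Hom}\left(S,X^{I}\right)=M^{I}$, objectwise evaluation gives
\[
\ran\left(S\right)\cong\colim_{I\in\finsurop}M^{I},
\]
where, for a surjection $s\colon I\twoheadrightarrow J$, the associated structure map $X^{J}\to X^{I}$ sends on $S$-points a function $f\colon J\to M$ to $f\circ s\colon I\to M$. The colimit is taken in $\gpdinf$, hence is a homotopy colimit, and the first thing to keep in mind is that a priori there is no reason for it to be discrete.

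The key move is to replace this colimit by a classifying space. The functor $D:=M^{(-)}\colon\finsurop\to\set\subseteq\gpdinf$ takes values in \emph{sets}, and for such functors the colimit in $\gpdinf$ is computed as the classifying space (geometric realization of the nerve) of the category of elements $\mathcal{E}:=\int_{\finsurop}D$; this is the standard computation of a colimit of a set-valued diagram via its Grothendieck construction (see \cite{HT}). Unwinding the definitions, $\mathcal{E}$ has objects the pairs $\left(I,f\colon I\to M\right)$ with $I$ a finite non-empty set, and a morphism $\left(I,f\right)\to\left(I',f'\right)$ is a surjection $s\colon I'\twoheadrightarrow I$ with $f\circ s=f'$. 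Thus it remains to identify $\left|\mathcal{E}\right|$.

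I would then show that $\mathcal{E}$ is a disjoint union of contractible components indexed by the finite non-empty subsets of $M$. Every morphism in $\mathcal{E}$ is given by a surjection $s$, so it preserves images: if $\left(I,f\right)\to\left(I',f'\right)$ then $f'\left(I'\right)=f\left(s\left(I'\right)\right)=f\left(I\right)$. Hence the assignment $\left(I,f\right)\mapsto f\left(I\right)$ is constant on connected components, and $\mathcal{E}$ is the disjoint union of the full subcategories $\mathcal{E}_{F}$ on objects with image a fixed finite non-empty $F\subseteq M$. For such $F$, with inclusion $\iota_{F}\colon F\hookrightarrow M$, I claim $\left(F,\iota_{F}\right)$ is an initial object of $\mathcal{E}_{F}$: a morphism $\left(F,\iota_{F}\right)\to\left(I,f\right)$ is a surjection $s\colon I\twoheadrightarrow F$ with $\iota_{F}\circ s=f$, and since $\iota_{F}$ is injective this forces $s$ to be the corestriction $\bar{f}\colon I\twoheadrightarrow F$ of $f$, which is indeed surjective onto $F=f\left(I\right)$ and is the unique such morphism. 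A category with an initial object has contractible classifying space, so $\left|\mathcal{E}_{F}\right|\simeq\t{pt}$.

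Assembling, $\ran\left(S\right)\cong\left|\mathcal{E}\right|=\coprod_{F}\left|\mathcal{E}_{F}\right|\simeq\coprod_{F}\t{pt}$, the discrete set of finite non-empty subsets $F\subseteq\t{Hom}\left(S,X\right)$; naturality in $S$ is clear from the construction. The one genuine obstacle is precisely the discreteness: because $\finsur$ is not cofiltered (parallel surjections cannot be equalized by a surjection from below), one cannot invoke a filteredness argument, and the homotopy colimit of these discrete spaces could in principle carry higher homotopy. The whole force of the argument is therefore concentrated in the observation that each component $\mathcal{E}_{F}$ of the category of elements has an initial object, which collapses all potential higher homotopy.
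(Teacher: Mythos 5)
Your proof is correct and follows essentially the same route as the paper's: both compute the homotopy colimit of the set-valued diagram via the Grothendieck construction and reduce everything to the contractibility of the category of surjections onto a fixed finite non-empty subset $F$, which has an initial (resp.\ terminal) object. The only difference is organizational --- the paper invokes universality of colimits in $\gpdinf$ to pass to the homotopy fiber over each $F$ of the augmented diagram, whereas you decompose the category of elements into its connected components indexed by images --- but the key contractibility input is identical.
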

\begin{proof}
Consider the augmented $\finsurop$ diagram 
\[
\xymatrix{\left(\finsur\cup\left\{ \emptyset\right\} \right)^{\t{op}}\ar[r] & \set\subseteq\gpdinf}
\]
given by 
\[
\xymatrix{\t{Hom}\left(S,X\right)\ar[r]\ar[drr] & \left(\t{Hom}\left(S,X\right)\right)^{2}\ar@<1ex>[r]\ar@<-1ex>[r]\ar@(ul,ur)^{S_{2}}\ar[drr]-<1.3cm,-0.2cm> & \left(\t{Hom}\left(S,X\right)\right)^{3}\ar@<2ex>[r]\ar@<-2ex>[r]\ar[r]\ar@(ul,ur)^{S_{3}}\ar[dr]-<0.8cm,-0.2cm> & \cdots\ar@<2ex>[d]^{\cdots}\\
 &  &  & \save+<0cm,-0.2cm>*{\left\{ F\subseteq\t{Hom}\left(S,X\right):F\,\mbox{ finite}\right\} }\restore
}
\]
(the circular arrows represent the action of the respective symmetric
groups on $n$ elements, $S_{n}$). By definition, $\ran\left(S\right)$
is the colimit in $\gpd_{\infty}$%
\footnote{We emphasize that we want to show this diagram is a homotopy colimit.
That this is a colimit diagram in sets is obvious.%
} of the top row, so we must show that diagram is a colimit diagram.

It suffices to prove that for every $F\in\left\{ F\subseteq\t{Hom}\left(S,X\right)\,:\, F\,\mbox{ finite}\right\} $,
the following homotopy fiber is contractible 
\[
\xymatrix{\t{pt}\times_{\left\{ F\subseteq\t{Hom}\left(S,X\right)\,:\, F\,\mbox{ finite}\right\} }\ran\left(S\right)\ar[r]\ar[d] & \ran\left(S\right)\ar[d]\\
\t{pt}\ar[r]_{F} & \left\{ F\subseteq\t{Hom}\left(S,X\right)\,:\, F\,\mbox{ finite}\right\} 
}
\]
 Since colimits in $\gpdinf$ are universal, this fiber is the colimit
of the $\finsur^{\t{op}}$ diagram in $\gpdinf$ 
\[
\xymatrix{\t{Surj}\left(\left\{ 1\right\} ,F\right)\ar[r] & \t{Surj}\left(\left\{ 2\right\} ,F\right)\ar@<1ex>[r]\ar@<-1ex>[r]\ar@(ul,ur)^{S_{2}} & \t{Surj}\left(\left\{ 3\right\} ,F\right)\ar@<2ex>[r]\ar@<-2ex>[r]\ar[r]\ar@(ul,ur)^{S_{3}} & \cdots}
\]
where $\left\{ n\right\} $ denotes a finite set with $n$ elements
and $\t{Surj}\left(\left\{ n\right\} ,F\right)$ is the set of surjections
$\left\{ n\right\} \twoheadrightarrow F$. We prove that this colimit
is contractible. Applying the Grothendieck un-straightening construction,
we get the Cartesian fibration 
\[
\xymatrix{\left(\finsur^{\t{op}}\right)_{/F}\ar[d]\\
\finsurop
}
\]
(where $F$ is now considered as an abstract finite set). The homotopy
type we are after is the the weak homotopy type of the total space,
$\left(\finsur^{\t{op}}\right)_{/F}$, which is evidently contractible
since it has a terminal element. \end{proof}

\bibliographystyle{amsalpha}
\bibliography{Thesis}

\end{document}